\definecolor{webgreen}{rgb}{0,.5,0}
\definecolor{webbrown}{rgb}{.6,0,0}
\newcommand{\braces}{\genfrac{\lbrace}{\rbrace}{0pt}{}}
\begin{document}

\theoremstyle{plain}
\newtheorem{theorem}{Theorem}
\newtheorem{proposition}[theorem]{Proposition}
\newtheorem{corollary}[theorem]{Corollary}
\newtheorem{lemma}[theorem]{Lemma}
\theoremstyle{definition}
\newtheorem{example}[theorem]{Example}
\newtheorem*{remark}{Remark}

\numberwithin{equation}{section}
\numberwithin{theorem}{section}

\begin{center}
\vskip 1cm
{\Large\bf Double sums associated with binomial transforms}

\vskip 1cm

{\large
Kunle Adegoke \\
Department of Physics and Engineering Physics \\ Obafemi Awolowo University \\ 220005 Ile-Ife, Nigeria \\
\href{mailto:adegoke00@gmail.com}{\tt adegoke00@gmail.com} \\
ORCID: 0000-0002-3933-0459

\vskip .25 in

Robert Frontczak \\
Independent Researcher \\ 72764 Reutlingen,  Germany \\
\href{mailto:robert.frontczak@web.de}{\tt robert.frontczak@web.de}\\
ORCID: 0000-0002-9373-5297

\vskip .25 in

Karol Gryszka \\
Institute of Mathematics \\ University of the National Education Commission, Krakow \\
Podchor\c{a}\.{z}ych 2, 30-084 Krak{\'o}w, Poland\\
\href{mailto:email}{\tt karol.gryszka@uken.krakow.pl} \\
ORCID: 0000-0002-3258-3330

\vskip .25 in
}

\end{center}

\vskip .2 in

\begin{abstract}
In this paper, we continue our investigation of double sums where the inner sum is binomial but incomplete. We prove many new results for these types of double sums associated with binomial transform pairs. As applications we deduce new identities for double sums involving special numbers like Bernoulli numbers, Fibonacci numbers, harmonic numbers, Catalan numbers and Stirling numbers of the second kind. 
We also consider families of polynomials like Fibonacci polynomials, Chebyshev polynomials, Bernoulli polynomials, and others. 
Finally, we state new double sums involving hyperbolic functions.
\end{abstract}

\noindent 2020 {\it Mathematics Subject Classification}:
Primary 05A10, Secondary 11B39, 11B65, 11B68, 11B83.

\noindent \emph{Keywords:}
Double sum, binomial coefficient, binomial transform, Fibonacci number, Bernoulli number, Catalan number, Stirling number of the second kind, polynomial, hyperbolic function.

\section{Introduction}

For $n=0,1,2,\dots$, let $(s_n)_{n\geq 0}$ and $(\sigma_n)_{n\geq 0}$ be two sequences of complex numbers. 
If these sequences are connected by the relations
\begin{equation*}
\sigma_n = \sum_{k = 0}^n \binom{n}{k} (-1)^k s_k, \qquad\Leftrightarrow\qquad s_n = \sum_{k = 0}^n \binom{n}{k} (-1)^k \sigma_k,
\end{equation*}
then $\{s_n,\sigma_n \}$ are known as and will be called a binomial transform pair. In addition, if the sequence $s_n$ satisfies the relation
\begin{equation}\label{self}
\sum_{k = 0}^n \binom{n}{k} (-1)^k s_k = s_n \qquad (n=0,1,2,\dots),
\end{equation}
then we say that $s_n$ is a self-inverse sequence. If 
\begin{equation}\label{anti_self}
\sum_{k = 0}^n \binom{n}{k} (-1)^k s_k = - s_n, \qquad (n=0,1,2,\dots),
\end{equation}
then $s_n$ will be called an anti-self-inverse sequence. \\

Although the topic is surely much older, the study of binomial transforms has attracted the interest of researchers in the recent past. 
We mention the papers by Gould \cite{Gould2}, Haukkanen \cite{Haukkanen} and Prodinger \cite{Prodinger}, who started to produce 
new results back in the 1990ies. Wang \cite{Wang}, Chen \cite{chen07}, the Sun brothers \cite{Sun1,Sun2,Sun3}, Mu \cite{Mu}, 
Boyadzhiev \cite{Boya1,Boya2,Boya3} and Adegoke \cite{Adegoke0} provided further results in this century. Boyadzhiev devoted the topic 
a whole book \cite{Boya4}. 

As many derivatives of important number sequences like Bernoulli numbers $B_n$ or Fibonacci numbers $F_n$ can be treated as binomial transform pairs, binomial transforms are prominently placed in papers about number theory, combinatorics and probability. Two particular results about the binomial transform pair $\{s_n,\sigma_n \}$ which are linked to our study are the following identities: 
For all non-negative integers $m$ and $n$ we have
\begin{equation}\label{s4v4rqn}
\sum_{k=0}^m \binom{m}{k} (-1)^k s_{n+k} = \sum_{j=0}^n \binom{n}{j} (-1)^j \sigma_{m+j}.
\end{equation}
For all complex numbers $p$ we also have the double sum identity
\begin{equation*}
p^{n+m} \sum_{j=0}^n \binom{n}{j} p^{-j} \sum_{k=0}^m \sigma_{j+k} p^{-k} 
= (p+1)^{n+m} \sum_{j=0}^n \binom{n}{j} (p+1)^{-j} \sum_{k=0}^m (-1)^{j+k} s_{j+k} (p+1)^{-k},
\end{equation*}
with the particular case being
\begin{equation*}
\sum_{j=0}^n \binom{n}{j} \sigma_{j} p^{n-j} = \sum_{j=0}^n \binom{n}{j} (-1)^j s_{j} (p+1)^{n-j}.
\end{equation*}
Both results can be proved via generating functions. Identity~\eqref{s4v4rqn} was first obtained by Chen~\cite[Proof of Theorem 3.1, generalized in Theorem 3.2]{chen07}; see also Gould and Quaintance~\cite{gould14}. In this paper, we deal with similar relations involving double sums of the form
\begin{equation}\label{double_sum}
\sum_{k=0}^n x^k \left (\sum_{j=0}^k \binom{n}{j} y^j\right ) \qquad k\leq n,
\end{equation}
where $x$ and $y$ are complex numbers. As is seen, the inner sum in \eqref{double_sum} is binomial but incomplete. 
This inner sum has no simple closed form. However, it can be expressed in terms of the Gaussian hypergeometric function ${}_2F_1$ 
defined for complex numbers $a,b,c$ and $z$ by
\begin{equation*}
{}_2F_1(a,b;c;z) = \sum_{n=0}^\infty \frac{(a)_n (b)_n}{(c)_n} \frac{z^n}{n!},
\end{equation*}
where $(a)_n$ is the Pochhammer symbol given by $(a)_n = \Gamma(a+n)/\Gamma(a)$, and $\Gamma(a)$ is the Gamma function. 
Following \cite{Stenlund} we have
\begin{equation*}
\sum_{j=0}^{k} \binom{n}{j} y^j = \frac{y^{k+1}}{y+1} \binom{n}{k} \,{}_2F_1\left(1,n+1;n+1-k;\tfrac{1}{y+1}\right).
\end{equation*}

In this paper, we continue our investigations started in \cite{Adegoke3}. 
We prove a range of new identities for double sums associated with binomial transform pairs.
As applications we deduce new identities for double sums involving special numbers like Bernoulli numbers $B_n$, Fibonacci numbers $F_n$, 
harmonic numbers $H_n$, Catalan numbers $C_n$ and Stirling numbers of the second kind. To give the reader a sense of what will follow, we highlight four of our findings as examples:
\begin{equation*}
\sum_{k = 0}^n \sum_{j = 0}^k \binom{n}{j} (-1)^k B_{k-j} = B_n + B_{n-1}, \qquad (n\geq 1),
\end{equation*}
\begin{equation*}
\sum_{k = 0}^n \sum_{j = 0}^k \binom{n}{j} (- 1)^j F_{k-j} = (-1)^{n-1} F_{n-2}, \qquad (n\geq 1),
\end{equation*}
\begin{equation*}
\sum_{k = 0}^n \sum_{j = 0}^k \binom{n}{j} (-1)^j H_{k + r - j} = (-1)^n
 \begin{cases}
 - H_{r+1}, & \text{if $n=1$;}  \\ 
 \frac{1}{n-1} \binom{n + r - 1}{r}^{-1} - \frac{1}{n} \binom{n + r}{r}^{-1}, &\text{if $n\geq 2$,} 
 \end{cases} 
\end{equation*}
and
\begin{equation*}
\sum_{k = 0}^n \sum_{j = 0}^k \binom{n}{j} (-1)^j 2^{2(k - j)} C_{n - k + j + 1} = 4C_n. 
\end{equation*}
We also study polynomial sequences like Bernoulli polynomials, Fibonacci polynomials or Chebyshev polynomials.
Finally, we state new double sums involving hyperbolic functions.

\section{Preliminaries}

This section contains the definitions and basic relations for the quantities used in the main text. 

The Catalan numbers $C_n$ are closely related to central binomial coefficients. They are commonly presented as 
$$C_{n} = \frac{1}{n+1} \binom{2n}{n},
$$  
but can also be expressed by the recursion
\begin{equation*}
C_n = \frac{2(2n-1)}{n+1} C_{n-1}, \quad C_0 = 1.
\end{equation*}

The Bernoulli numbers, $B_n$, are defined by the generating function
\begin{equation*}
\frac{z}{e^z - 1} = \sum_{n = 0}^\infty B_n \frac{z^n}{n!}, \quad |z| < 2\pi,
\end{equation*}
and the Bernoulli polynomials $B_n(x)$ by the generating function
\begin{equation*}
\frac{z e^{xz}}{e^z - 1} = \sum_{n = 0}^\infty B_n(x) \frac{z^n}{n!}, \quad |z| < 2\pi.
\end{equation*}
Clearly, $B_n=B_n(0)$. An explicit formula for the Bernoulli polynomials is
\begin{equation}\label{nskd3u8}
B_n (x) = \sum_{k = 0}^n \binom{n}{k} B_k x^{n-k},
\end{equation}
while a recurrence formula for them is
\begin{equation*}
B_n (x + 1) = \sum_{k = 0}^n \binom{n}{k} B_k (x),
\end{equation*}
or more generally,
\begin{equation}\label{de8ucu6}
B_n(x + y) = \sum_{k = 0}^n \binom{n}{k} B_k (x) y^{n-k}.
\end{equation}
We also have the relation
\begin{equation}\label{Ber_bt}
\sum_{k = 0}^n \binom{n}{k} B_k = (- 1)^n B_n.
\end{equation}

The Fibonacci numbers $F_n$ and the Lucas numbers $L_n$ are defined, for \text{$n\in\mathbb Z$}, through the recurrence relations
\begin{align*}
F_n = F_{n-1}+F_{n-2},& \quad n\geq 2,\quad F_0=0,\, F_1=1,\\
L_n = L_{n-1}+L_{n-2},& \quad  n\geq 2, \quad L_0=2, \, L_1=1.
\end{align*}
The Binet formulas for these sequences are
\begin{equation*}
F_n = \frac{\alpha^n - \beta^n}{\alpha - \beta}, \qquad L_n = \alpha^n + \beta^n, \quad n\in\mathbb Z,
\end{equation*}
with $\alpha=\frac{1+\sqrt 5}2$ being the golden ratio and $\beta=-\frac{1}{\alpha}$. For negative subscripts we have 
$F_{-n} = (-1)^{n-1}F_n$ and $L_{-n} = (-1)^n L_n$. These famous sequences are indexed as sequences {A000045} and {A000032} 
in the On-Line Encyclopedia of Integer Sequences \cite{OEIS}. A huge amount of further information about them can be found 
in the books by Koshy \cite{Koshy} and Vajda \cite{Vajda}, for instance. Also recall that the Gibonacci sequences has the same recurrence relation as the Fibonacci sequence but starts with arbitrary initial values, i.e.,
\begin{equation*}
G_k = G_{k - 1} + G_{k - 2},\qquad (k \ge 2),
\end{equation*}
with $G_0$ and $G_1$ arbitrary numbers (usually integers) not both zero. When $G_0=0$ and $G_1=1$ then $G_n=F_n$, and when 
$G_0=2$ and $G_1=1$ then $G_n=L_n$, respectively. The sequence obeys the generalized Binet formula
\begin{equation*}
G_n = A\alpha^n + B\beta^n,
\end{equation*}
where $A=\tfrac{G_1-G_0\beta}{\alpha-\beta}$ and $B=\tfrac{G_0\alpha-G_1}{\alpha-\beta}$. 

For $m,s\in\mathbb C$, the harmonic numbers of order $m$, $H_s^{(m)}$, and odd harmonic numbers of order $m$, $O_s^{(m)}$, 
are defined by
\begin{equation*}
H_s^{(m)} = H_{s - 1}^{(m)} + \frac{1}{s^m} \qquad \text{and} \qquad O_s^{(m)} = O_{s - 1}^{(m)} + \frac{1}{(2s - 1)^m},
\end{equation*}
with $H_0^{(m)}=0$ and $O_0^{(m)}=0$. The recurrence relations imply that if $s=n$ is a non-negative integer, then
\begin{equation*}
H_n^{(m)} = \sum_{j = 1}^n \frac{1}{j^m} \qquad \text{and} \qquad O_n^{(m)} = \sum_{j = 1}^n \frac{1}{(2j - 1)^m}.
\end{equation*}
For $m=1$, $H_s=H_s^{(1)}$ and $O_s=O_s^{(1)}$ are the ordinary harmonic and odd harmonic numbers, respectively. 
Harmonic numbers are connected to the digamma function $\psi(s)=\Gamma'(s)/\Gamma(s)$ through the fundamental relation
\begin{equation}\label{Harm_psi}
H_s = \psi(s + 1) + \gamma,
\end{equation}
where $\gamma$ is the Euler-Mascheroni constant.

\section{New results associated with binomial transform pairs}

In this section, we collect our main results for double sums involving binomial transform pairs $\{s_n,\sigma_n \}$.
Applications for special sequences of numbers and polynomials will be given in the proceeding Sections 4 and 5. 

To prove our results we exploit the main statement of \cite{Adegoke3} which is Theorem 3.1 in \cite{Adegoke3}. 
It is reproduced here for the reader's convenience:
\begin{theorem}\label{main_thm1}
For all complex numbers $x$ and $y$ we have
\begin{equation}\label{main_id1}
\sum_{k=0}^n x^k \sum_{j=0}^k \binom{n}{j} y^j = \frac{1}{1-x}\left ( (1+xy)^n - x^{n+1} (1+y)^n \right ).
\end{equation}
\end{theorem}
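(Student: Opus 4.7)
The natural plan is to swap the order of summation and recognise the resulting inner sum as a geometric series. Writing out the left-hand side,
\begin{equation*}
\sum_{k=0}^n x^k \sum_{j=0}^k \binom{n}{j} y^j
= \sum_{j=0}^n \binom{n}{j} y^j \sum_{k=j}^n x^k,
\end{equation*}
reduces the problem to evaluating $\sum_{k=j}^n x^k$, which (for $x\neq 1$) equals $(x^j-x^{n+1})/(1-x)$.

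Substituting this back in splits the sum into two pieces, both of which are evaluable by the binomial theorem:
\begin{equation*}
\frac{1}{1-x}\Bigl(\sum_{j=0}^n \binom{n}{j}(xy)^j - x^{n+1}\sum_{j=0}^n \binom{n}{j} y^j\Bigr)
= \frac{1}{1-x}\bigl((1+xy)^n - x^{n+1}(1+y)^n\bigr),
\end{equation*}
which is exactly the right-hand side of \eqref{main_id1}.

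There is no real obstacle; the only subtlety is the apparent exclusion of $x=1$ from the statement, since the right-hand side has a removable singularity there. One can either treat the identity as understood in the limiting sense $x\to 1$, or verify directly that
\begin{equation*}
\lim_{x\to 1}\frac{(1+xy)^n - x^{n+1}(1+y)^n}{1-x} = \sum_{j=0}^n \binom{n}{j} y^j (n+1-j),
\end{equation*}
which is precisely the value of the left-hand side when $x=1$ (obtained by swapping summation order and counting $n+1-j$ terms in the inner geometric sum). With that caveat noted, the two-line computation above completes the proof.
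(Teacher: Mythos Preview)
Your proof is correct: interchanging the order of summation, summing the resulting geometric series, and applying the binomial theorem twice is exactly the natural route, and your handling of the removable singularity at $x=1$ is appropriate.

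Note that the paper itself does not give a proof of this theorem; it is quoted from the authors' earlier work~\cite{Adegoke3} and merely reproduced here for convenience. So there is no ``paper's own proof'' to compare against, but your argument is the standard one and is almost certainly what appears in the cited reference.
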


\begin{corollary}
If $n$ is a positive integer and $y$ is a complex number, then
\begin{equation}\label{gr3jv5j}
\sum_{k = 0}^n \sum_{j = 0}^k \binom{n}{j} (- 1)^j (1-y)^j = n y^{n-1} + y^n.
\end{equation}
\end{corollary}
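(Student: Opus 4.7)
The plan is to deduce the corollary from Theorem~\ref{main_thm1} by a limiting argument at $x=1$.

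First, I would rewrite the target sum to match the shape of the main identity. Since $(-1)^j(1-y)^j = (y-1)^j$, the left-hand side of \eqref{gr3jv5j} equals
\begin{equation*}
\sum_{k=0}^n 1^k \sum_{j=0}^k \binom{n}{j} (y-1)^j,
\end{equation*}
which is precisely the left side of \eqref{main_id1} with $x=1$ and with the letter $y$ there replaced by $y-1$. However, the substitution $x=1$ makes the prefactor $1/(1-x)$ on the right side of \eqref{main_id1} singular, so \eqref{main_id1} cannot be applied directly; one has to pass to a limit.

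Next I would therefore keep $x$ as a variable, substitute $y \mapsto y-1$ in \eqref{main_id1}, obtaining
\begin{equation*}
\sum_{k=0}^n x^k \sum_{j=0}^k \binom{n}{j} (y-1)^j = \frac{(1+x(y-1))^n - x^{n+1} y^n}{1-x},
\end{equation*}
and let $x \to 1$. The left-hand side is a polynomial in $x$, so its value at $x=1$ is exactly the sum in \eqref{gr3jv5j}. On the right, the numerator $f(x) := (1+x(y-1))^n - x^{n+1} y^n$ satisfies $f(1) = y^n - y^n = 0$, so the quotient is a $0/0$ indeterminate form to which L'H\^opital's rule (or equivalently a first-order Taylor expansion of $f$ about $x=1$) applies, giving $\lim_{x\to 1} f(x)/(1-x) = -f'(1)$.

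Finally I would compute $f'(1)$ directly: $f'(x) = n(y-1)(1+x(y-1))^{n-1} - (n+1)x^n y^n$, so
\begin{equation*}
-f'(1) = -n(y-1)y^{n-1} + (n+1)y^n = -ny^n + ny^{n-1} + (n+1)y^n = ny^{n-1} + y^n,
\end{equation*}
which is the claimed right-hand side. The only mildly delicate point is justifying the passage to the limit, but this is immediate since the left-hand side is a polynomial in $x$ and the right-hand side has a removable singularity at $x=1$; no step requires more than routine differentiation.
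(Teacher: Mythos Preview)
Your argument is correct and matches the paper's own proof almost exactly. The paper multiplies \eqref{main_id1} through by $1-x$, differentiates with respect to $x$, sets $x=1$, and then makes the substitution in $y$; your L'H\^opital step is precisely this computation (since $(1-x)\cdot\text{LHS}=f(x)$ gives $-\text{LHS}|_{x=1}=f'(1)$), the only cosmetic difference being that you perform the substitution $y\mapsto y-1$ before rather than after the limit.
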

\begin{proof}
Multiply through~\eqref{main_id1} by $1-x$, differentiate with respect to $x$, set $x=1$ and make a replacement $y\mapsto 1-y$.
\end{proof}

\begin{lemma}\label{main_lem}
Let $\left\{s_n,\sigma_n \right\}$, $n=0,1,2,\ldots$, be a binomial transform pair. Let $\mathcal L_y$ be a linear operator 
defined by $\mathcal L_y(y^j)=s_j$ for every complex number $y$ and every non-negative integer~$j$. Then $\mathcal L_y((1-y)^j)=\sigma_j$.
\end{lemma}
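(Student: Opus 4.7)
The plan is to use two ingredients only: the binomial theorem (to expand $(1-y)^j$ as a finite linear combination of the monomials $y^k$) and the linearity of $\mathcal L_y$ (to push the operator inside the sum). The definition of a binomial transform pair will then identify the resulting expression with $\sigma_j$.

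More concretely, I would first write
\begin{equation*}
(1-y)^j = \sum_{k=0}^{j} \binom{j}{k}(-1)^k y^k,
\end{equation*}
which is a finite sum, so linearity of $\mathcal L_y$ applies without any convergence issue. Next I would apply $\mathcal L_y$ term by term, using the defining property $\mathcal L_y(y^k)=s_k$, obtaining
\begin{equation*}
\mathcal L_y\bigl((1-y)^j\bigr) = \sum_{k=0}^{j} \binom{j}{k}(-1)^k s_k.
\end{equation*}
Finally, I would invoke the definition of the binomial transform pair, which says precisely that the right-hand side equals $\sigma_j$, completing the proof.

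There is essentially no obstacle here: the statement is a one-line consequence of linearity and the binomial theorem, and it should be viewed as a formal/notational lemma that packages the binomial transform as a substitution $y \mapsto 1-y$ at the level of the symbolic operator $\mathcal L_y$. The only minor subtlety worth flagging in the writeup is that the expansion of $(1-y)^j$ is a \emph{finite} sum, so no hypothesis beyond linearity of $\mathcal L_y$ on polynomials in $y$ is required.
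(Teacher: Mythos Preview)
Your proof is correct and follows essentially the same approach as the paper: expand $(1-y)^j$ by the binomial theorem, apply linearity of $\mathcal L_y$ term by term to replace $y^k$ by $s_k$, and recognize the resulting sum as $\sigma_j$ from the definition of a binomial transform pair.
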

\begin{proof}
We have
\begin{align*}
\mathcal L_y\left( (1 - y)^j \right) = \mathcal L_y\left( \sum_{k = 0}^j \binom{j}{k} (- 1)^k y^k \right) 
= \sum_{k = 0}^j \binom{j}{k} (- 1)^k \mathcal L_y (y^k) = \sum_{k = 0}^j \binom{j}{k} (- 1)^k s_k =\sigma_j.
\end{align*}
\end{proof}

\begin{theorem}\label{thm.qtnqdr9}
Let $\{s_n,\sigma_n \}$, $n=0,1,2,\dots$, be a binomial transform pair. If $n$ is a positive integer and $m$ is a non-negative integer, then
\begin{equation}
\sum_{k = 0}^n \sum_{j = 0}^k ( - 1)^j \binom{{n}}{j} s_{m + n - k + j} = \sum_{k = 0}^m (-1)^k\binom{{m}}{k} \sigma_{n + k - 1}. 
\end{equation}
In particular,
\begin{equation}\label{utju7bn}
\sum_{k = 0}^n \sum_{j = 0}^k (- 1)^j \binom{n}{j} s_{n + j - k} = \sigma_{n - 1}.
\end{equation}
\end{theorem}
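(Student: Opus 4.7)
The plan is to view the left-hand side as the image, under the linear operator $\mathcal L_y$ of Lemma~\ref{main_lem}, of an explicit polynomial in $y$, and then re-expand that polynomial in powers of $(1-y)$ so that $\mathcal L_y$ produces the $\sigma$-values appearing on the right-hand side.

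\textbf{Step 1: Specialize Theorem~\ref{main_thm1}.} I would substitute $x\mapsto 1/y$ and $y\mapsto -y$ in \eqref{main_id1}. Because $n\geq 1$, the term $(1+xy)^n = 0^n$ vanishes, leaving
\begin{equation*}
\sum_{k=0}^n y^{-k}\sum_{j=0}^k\binom{n}{j}(-y)^j
= \frac{1}{1-1/y}\left(-\frac{(1-y)^n}{y^{n+1}}\right)
= \frac{(1-y)^{n-1}}{y^{n}}.
\end{equation*}
Multiplying through by $y^{m+n}$ and absorbing the sign into the exponent, I obtain the clean identity
\begin{equation*}
\sum_{k=0}^n\sum_{j=0}^k(-1)^j\binom{n}{j} y^{m+n-k+j}
= y^{m}(1-y)^{n-1}.
\end{equation*}

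\textbf{Step 2: Apply the operator $\mathcal L_y$.} Since $\mathcal L_y(y^r)=s_r$, the left-hand side of the display above is mapped by $\mathcal L_y$ precisely to the double sum $\sum_{k=0}^n\sum_{j=0}^k(-1)^j\binom{n}{j}s_{m+n-k+j}$, which is the left-hand side of the theorem. It remains to compute $\mathcal L_y\!\left(y^m(1-y)^{n-1}\right)$.

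\textbf{Step 3: Re-expand in powers of $(1-y)$.} Writing $y = 1-(1-y)$ and using the binomial theorem gives
\begin{equation*}
y^m(1-y)^{n-1} = \sum_{k=0}^m\binom{m}{k}(-1)^k (1-y)^{n+k-1}.
\end{equation*}
By Lemma~\ref{main_lem}, $\mathcal L_y\bigl((1-y)^{n+k-1}\bigr)=\sigma_{n+k-1}$, so linearity yields
\begin{equation*}
\mathcal L_y\!\left(y^m(1-y)^{n-1}\right) = \sum_{k=0}^m\binom{m}{k}(-1)^k\sigma_{n+k-1},
\end{equation*}
which is the right-hand side. The particular case \eqref{utju7bn} then follows by taking $m=0$, where only the $k=0$ summand survives.

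The routine part is the algebraic simplification in Step~1; the only spot that needs mild care is the appearance of $0^n$, which requires the hypothesis $n\geq 1$ and is exactly why the statement is restricted to positive $n$. Recognizing that one should re-expand $y^m$ as a polynomial in $(1-y)$ (rather than $(1-y)^{n-1}$ as a polynomial in $y$) is the only small conceptual move, and it is forced by the shape of the right-hand side, which involves $\sigma$'s rather than $s$'s.
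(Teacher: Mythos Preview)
Your proof is correct and follows essentially the same route as the paper: the same specialization $x\mapsto 1/y$, $y\mapsto -y$ of \eqref{main_id1} followed by an application of Lemma~\ref{main_lem}. The only cosmetic difference is that you treat general $m$ directly by expanding $y^m=(1-(1-y))^m$ before applying $\mathcal L_y$, whereas the paper first establishes the $m=0$ case \eqref{utju7bn} and then reaches general $m$ by invoking the shifted-pair fact from~\eqref{s4v4rqn}; these two moves are equivalent.
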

\begin{proof}
In \eqref{main_id1} make the replacements $x\mapsto 1/y$ and $y\mapsto -y$. This gives after multiplying through by $y^n$
\begin{equation}\label{hctczju}
\sum_{k=0}^n \sum_{j=0}^k \binom{n}{j} (-1)^j y^{n-k+j} = (1-y)^{n-1}.
\end{equation}
Now, apply Lemma \ref{main_lem}. The final identity is obtained by noticing from~\eqref{s4v4rqn} that the sequences
\begin{equation*}
s_{m + k} \qquad \text{and} \qquad \sum_{j = 0}^m \binom{{m}}{j} (- 1)^j \sigma_{j + k}
\end{equation*}
are a binomial transform pair.
\end{proof}

\begin{corollary}\label{cor.gca1g47}
Let $\{s_n,\sigma_n \}$, $n=0,1,2,\ldots$, be a binomial transform pair. If $n$ is a positive integer, then
\begin{equation}\label{vsjufnd}
\sum_{k = 0}^n \sum_{j = 0}^k (- 1)^j \binom{n}{k} H_{n + j - k} s_{n + j - k} = \sigma_{n - 1} H_{n - 1} - \sum_{k = 1}^{n - 1} 
\frac{\sigma_{n - 1 - k}}{k}.
\end{equation}
\end{corollary}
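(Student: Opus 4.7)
My plan is to weight the polynomial identity~\eqref{hctczju}, namely $\sum_{k=0}^n\sum_{j=0}^k\binom{n}{j}(-1)^j y^{n+j-k}=(1-y)^{n-1}$, by an extra harmonic factor $H_{n+j-k}$ and then apply the operator $\mathcal L_y$ of Lemma~\ref{main_lem}. Since $\mathcal L_y$ is linear with $\mathcal L_y(y^j)=s_j$ and $\mathcal L_y\big((1-y)^j\big)=\sigma_j$, the corollary will follow from a suitable closed-form polynomial identity in $y$.

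To insert the harmonic weight I use the integral representation $H_m y^m=\int_0^1 \frac{y^m-(yt)^m}{1-t}\,dt$, which holds for every non-negative integer $m$ (trivially for $m=0$ since $H_0=0$). Multiplying each monomial $y^{n+j-k}$ in~\eqref{hctczju} by $H_{n+j-k}$ and interchanging summation with integration produces
\begin{equation*}
\sum_{k=0}^n\sum_{j=0}^k\binom{n}{j}(-1)^j H_{n+j-k}\, y^{n+j-k}
=\int_0^1 \frac{(1-y)^{n-1}-(1-yt)^{n-1}}{1-t}\,dt.
\end{equation*}

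The key step is to evaluate this integral in closed form. The substitution $u=1-yt$ converts it into $\int_{1-y}^{1}\frac{u^{n-1}-(1-y)^{n-1}}{u-(1-y)}\,du$, whereupon the factorisation $\frac{u^{n-1}-a^{n-1}}{u-a}=\sum_{j=0}^{n-2}u^j a^{n-2-j}$ with $a=1-y$ leaves only elementary integrals $\int_{1-y}^{1}u^j\,du$. Collecting terms yields the closed form
\begin{equation*}
\int_0^1 \frac{(1-y)^{n-1}-(1-yt)^{n-1}}{1-t}\,dt
=H_{n-1}(1-y)^{n-1}-\sum_{l=0}^{n-2}\frac{(1-y)^l}{n-1-l}.
\end{equation*}

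Applying $\mathcal L_y$ to both sides then sends the left-hand side to the double sum of~\eqref{vsjufnd} and the right-hand side to $\sigma_{n-1}H_{n-1}-\sum_{l=0}^{n-2}\sigma_l/(n-1-l)$; re-indexing $k=n-1-l$ in the latter recovers the stated expression. The principal obstacle is the closed-form evaluation of the integral—once that is performed, the rest is a mechanical application of $\mathcal L_y$ in the same spirit as Lemma~\ref{main_lem} and Theorem~\ref{thm.qtnqdr9}.
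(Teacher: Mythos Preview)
Your approach is correct and reaches the stated identity, but it differs substantially from the paper's one-line argument. The paper simply invokes Boyadzhiev's theorem that if $\{s_n,\sigma_n\}$ is a binomial transform pair then so is $\bigl\{H_n s_n,\ \sigma_n H_n-\sum_{k=1}^n\sigma_{n-k}/k\bigr\}$, and plugs this new pair directly into~\eqref{utju7bn}. Your route instead rebuilds the effect of the harmonic weight from scratch: you insert $H_m$ through the integral $\int_0^1(1-t^m)/(1-t)\,dt$, evaluate the resulting integral as a polynomial in $1-y$, and only then apply $\mathcal L_y$. This is more self-contained---it does not cite an external transform identity and makes transparent why the corrective sum $\sum_k\sigma_{n-1-k}/k$ appears---at the cost of a longer calculation; the paper's version is a two-line black-box application.

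One small slip worth flagging: the substitution $u=1-yt$ actually produces
\[
\int_0^1\frac{(1-y)^{n-1}-(1-yt)^{n-1}}{1-t}\,dt=-\int_{1-y}^{1}\frac{u^{n-1}-(1-y)^{n-1}}{u-(1-y)}\,du,
\]
with an extra minus sign (check $n=2$, where the left side equals $-y$ but your displayed integral equals $y$). Your ``collecting terms'' then silently introduces a second sign flip, because $\sum_{j=0}^{n-2}a^{n-2-j}\int_{a}^{1}u^j\,du=\sum_{l=0}^{n-2}a^{l}/(n-1-l)-H_{n-1}a^{n-1}$, the negative of what you wrote. The two errors cancel and the closed form you state is correct, but you should fix the intermediate line. (Both you and the paper's proof implicitly read the binomial in~\eqref{vsjufnd} as $\binom{n}{j}$, consistent with~\eqref{utju7bn}; the displayed $\binom{n}{k}$ is a typo.)
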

\begin{proof}
Boyadzhiev~\cite{Boya3} has shown that if $\{s_n,\sigma_n \}$, $n=0,1,2,\ldots$, is a binomial transform pair, then
\begin{equation*}
\sum_{k = 0}^n (- 1)^k \binom nk H_k s_k = \sigma_n H_n - \sum_{k = 1}^n \frac{\sigma_{n - k}}{k}.
\end{equation*}
Use this result in~\eqref{utju7bn}.
\end{proof}

\begin{theorem}\label{thm.ef1drze}
Let $(s_n)$ and $(t_n)$, $n=0,1,2,\ldots$, be two sequences of complex numbers. If $\{s_n,\sigma_n \}$ and $\{t_n,\tau_n \}$ are binomial pairs, then
\begin{equation}\label{eq:2ndpart12asdszdfw}
\sum_{k = 0}^n \sum_{j = 0}^k (- 1)^j n \binom{n}{j} s_{k - j} t_{n - k + j}=\sum_{k=0}^n(-1)^k k\binom{n}{k}\tau_{n-k}(\sigma_k-\sigma_{k-1}),
\end{equation}
or more generally, for $m\geq 0$, the following identity holds:
\begin{equation}\label{eq:2ndpart12asdszdfw_general}
\sum_{k = 0}^n \sum_{j = 0}^k (- 1)^j n \binom{n}{j} s_{m+k - j} t_{n - k + j} = \sum_{k=0}^n \sum_{j=0}^m (-1)^{k+j} k \binom{n}{k}
\binom{m}{j} \tau_{n-k} \left(\sigma_{j+k}-\sigma_{j+k-1}\right).
\end{equation}
\end{theorem}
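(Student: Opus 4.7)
The plan is to derive both identities from a single bivariate polynomial identity and then apply the operator machinery of Lemma~\ref{main_lem} separately in each variable. Introduce independent variables $y,z$ and linear operators $\mathcal L_y,\mathcal L_z$ with $\mathcal L_y(y^j)=s_j$ and $\mathcal L_z(z^j)=t_j$; by Lemma~\ref{main_lem} they also satisfy $\mathcal L_y((1-y)^j)=\sigma_j$ and $\mathcal L_z((1-z)^j)=\tau_j$. The master polynomial identity to establish is
\begin{equation*}
\sum_{k=0}^n\sum_{j=0}^k(-1)^jn\binom{n}{j}y^{k-j}z^{n-k+j}=ny(y-z)^{n-1}.
\end{equation*}
I would obtain this from \eqref{main_id1} by relabelling its variables as $X,Y$ and substituting $X=y/z$, $Y=-z/y$: then $XY=-1$ annihilates the $(1+XY)^n$ term on the right of \eqref{main_id1}, while $1+Y=(y-z)/y$ and $1-X=-(y-z)/z$ simplify the remaining piece; clearing the denominator $z^n$ and multiplying by $n$ yields the claimed closed form.

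Once the polynomial identity is in hand, \eqref{eq:2ndpart12asdszdfw} drops out by applying $\mathcal L_y\mathcal L_z$ to both sides. On the left this produces exactly the double sum $\sum_{k,j}(-1)^jn\binom{n}{j}s_{k-j}t_{n-k+j}$. On the right I rewrite using $y=1-(1-y)$ and $y-z=(1-z)-(1-y)$, obtaining
\begin{equation*}
ny(y-z)^{n-1}=n\bigl(1-(1-y)\bigr)\sum_{k=0}^{n-1}(-1)^k\binom{n-1}{k}(1-y)^k(1-z)^{n-1-k},
\end{equation*}
so that after applying $\mathcal L_y\mathcal L_z$ the two resulting sums combine into $n\sum_k(-1)^k\binom{n-1}{k}\tau_{n-1-k}(\sigma_k-\sigma_{k+1})$. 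Shifting $k\mapsto k-1$ and using the identity $n\binom{n-1}{k-1}=k\binom{n}{k}$ turns this into the right-hand side of \eqref{eq:2ndpart12asdszdfw}.

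For the general case \eqref{eq:2ndpart12asdszdfw_general}, two routes are available. The direct route repeats the argument with $ny^{m+1}(y-z)^{n-1}$ in place of $ny(y-z)^{n-1}$; expanding $y^{m+1}=(1-(1-y))^{m+1}$ by the binomial theorem, regrouping via Pascal's rule $\binom{m+1}{j}=\binom{m}{j}+\binom{m}{j-1}$, and shifting the outer summation index again produces the telescoping $\sigma_{j+k}-\sigma_{j+k-1}$. A quicker alternative is to deduce \eqref{eq:2ndpart12asdszdfw_general} from \eqref{eq:2ndpart12asdszdfw} applied to the shifted pair $\{s_{m+k},\tilde\sigma_k\}$, where \eqref{s4v4rqn} gives $\tilde\sigma_k=\sum_{j=0}^m(-1)^j\binom{m}{j}\sigma_{j+k}$; expanding the telescoping $\tilde\sigma_k-\tilde\sigma_{k-1}$ immediately yields the claimed right-hand side. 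The principal obstacle I anticipate is entirely clerical, namely tracking the several reindexings so that the signs, the passage from $n\binom{n-1}{k-1}$ to $k\binom{n}{k}$, and the telescoping patterns line up consistently.
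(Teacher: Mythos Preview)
Your proposal is correct and follows essentially the same approach as the paper: both derive a bivariate polynomial identity from~\eqref{main_id1} whose right-hand side is (a multiple of) $y(y-z)^{n-1}$, then apply the operators $\mathcal L_y,\mathcal L_z$ of Lemma~\ref{main_lem}, and both obtain the general case~\eqref{eq:2ndpart12asdszdfw_general} via the shifted pair supplied by~\eqref{s4v4rqn}. The only difference is that the paper passes through the intermediate form $\sum_{k}(-1)^{n-k}k\binom{n}{k}s_k t_{n-k}$ and then converts it using a differentiated binomial-theorem identity, whereas you expand $ny(y-z)^{n-1}$ directly in powers of $1-y$ and $1-z$, which is a mild streamlining of the same argument.
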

\begin{proof}
Write $y/x$ for $y$ in~\eqref{hctczju} and use $\binom{n-1}k=\frac{n-k}n\binom nk$ to obtain
\begin{align*}
\sum_{k = 0}^n \sum_{j = 0}^k (- 1)^j \binom{{n}}{j} x^{k - j} y^{n - k + j} &= \sum_{k = 0}^n (- 1)^k \binom{{n - 1}}{k} x^{n - k} y^k \\
&= \sum_{k=0}^n (-1)^k \binom{n}{k} \frac{n-k}{n} x^{n-k} y^k \\
&= \sum_{k=0}^n (-1)^{n-k} \binom{n}{k} \frac{k}{n} x^{k}y^{n-k}.
\end{align*}
Act on both sides with the operator $\mathcal L_x\mathcal L_y$ (see Lemma \ref{main_lem}), which gives the intermediate result
\begin{equation}\label{eq:intermediate_identity}
\sum_{k = 0}^n \sum_{j = 0}^k (- 1)^j n \binom{n}{j} s_{k - j} t_{n - k + j} = \sum_{k = 0}^n (- 1)^{n - k} k \binom{n}{k} s_k t_{n - k}.
\end{equation}

To verify \eqref{eq:2ndpart12asdszdfw} first notice that by the binomial theorem:
$$\sum_{k=0}^n \binom{n}{k} (-1)^k x^k y^{n-k} = (y-x)^n = ((1-x)-(1-y))^n = \sum_{k=0}^n \binom{n}{k} (-1)^{n-k} (1-y)^{n-k}(1-x)^k.$$
Apply the operator $x\cdot \frac{d}{dx}$ to that identity, which yields
$$\sum_{k=0}^n (-1)^k \binom{n}{k} k x^k y^{n-k} = \sum_{k=0}^n (-1)^{n-k} \binom{n}{k} (1-y)^{n-k} k(-x) (1-x)^{k-1}.$$
Write $(-x)(1-x)^{k-1}=(1-x)^{k}-(1-x)^{k-1}$ and multiply through by $(-1)^n$. To finish the proof, apply the operator $\mathcal L_x\mathcal L_y$, use Lemma \ref{main_lem} and combine with \eqref{eq:intermediate_identity}. The identity \eqref{eq:2ndpart12asdszdfw_general} is a consequence of \eqref{s4v4rqn} (see also the proof of Theorem \ref{thm.qtnqdr9}).
\end{proof}

The next results are corollaries to Theorem~\ref{thm.ef1drze}.

\begin{corollary}\label{cor.cp9d6at}
Let $\{s_n,\sigma_n \}$, $n=0,1,2,\ldots$, be a binomial transform pair. Then
\begin{equation}
\sum_{k = 1}^n \sum_{j = 0}^{k - 1} ( - 1)^j n \binom{{n}}{j} \frac{s_{m + n - k + j}}{k - j} = \sum_{k = 0}^m ( - 1)^k \binom{{m}}{k}\sigma _{k + n} - ( - 1)^n s_{m + n}.
\end{equation}
In particular,
\begin{equation}\label{msdi8n1}
\sum_{k = 1}^n \sum_{j = 0}^{k - 1} (- 1)^j n \binom{n}{j} \frac{s_{n - k + j}}{k - j} = \sigma_n - (- 1)^n s_n.
\end{equation}
\end{corollary}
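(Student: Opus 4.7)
The plan is to derive a bivariate polynomial identity in $x$ and $y$ whose coefficients produce the desired factor $\frac{1}{k-j}$, and then apply the linear operator $\mathcal L_y$ from Lemma~\ref{main_lem}. The starting point is the identity
$$\sum_{k = 0}^n \sum_{j = 0}^k (- 1)^j \binom{n}{j} x^{k - j} y^{n - k + j} = x(x-y)^{n-1},$$
which is already established in the proof of Theorem~\ref{thm.ef1drze} (it arises from~\eqref{hctczju} after the substitution $y \mapsto y/x$ and multiplication through by $x^n$).

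Next, I isolate the diagonal terms $j=k$. Their total contribution is $y^n\sum_{k=0}^n(-1)^k\binom{n}{k}$, which vanishes for $n\ge 1$. This permits restricting the double sum to $1\le k\le n$, $0\le j\le k-1$, ensuring every surviving term carries a positive power of $x$. Dividing through by $x$ and integrating both sides over $x\in[0,1]$ converts $x^{k-j-1}$ into $\frac{1}{k-j}$ on the left, while the right-hand side becomes $\int_0^1(x-y)^{n-1}\,dx=\frac{1}{n}\bigl((1-y)^n-(-1)^n y^n\bigr)$. Multiplying by $n$ yields the polynomial identity
$$\sum_{k=1}^n \sum_{j=0}^{k-1} (-1)^j n\binom{n}{j}\frac{y^{n-k+j}}{k-j}=(1-y)^n-(-1)^n y^n.$$

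Applying $\mathcal L_y$ sends $y^{n-k+j}\mapsto s_{n-k+j}$ and, via Lemma~\ref{main_lem}, $(1-y)^n\mapsto\sigma_n$, which gives the particular case~\eqref{msdi8n1} at once. For the general identity I would then repeat the device used in the proof of Theorem~\ref{thm.qtnqdr9}: by~\eqref{s4v4rqn}, the sequences $\tilde s_n:=s_{m+n}$ and $\tilde\sigma_n:=\sum_{k=0}^m(-1)^k\binom{m}{k}\sigma_{n+k}$ again form a binomial transform pair, and substituting this pair into~\eqref{msdi8n1} produces the stated formula.

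The only delicate point is the removal of the diagonal contribution before dividing by $x$; without it, the left-hand side would develop terms of the form $1/x$ that would spoil the integration over $[0,1]$. Everything after that step reduces to a direct computation.
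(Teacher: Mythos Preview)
Your argument is correct. The polynomial identity you derive,
\[
\sum_{k=1}^n \sum_{j=0}^{k-1} (-1)^j n\binom{n}{j}\frac{y^{n-k+j}}{k-j}=(1-y)^n-(-1)^n y^n,
\]
follows exactly as you describe, and applying $\mathcal L_y$ together with the shift trick from~\eqref{s4v4rqn} finishes the job.

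Your route differs from the paper's. The paper simply invokes Theorem~\ref{thm.ef1drze} with the auxiliary sequence $t_n=1/n$: in the intermediate identity~\eqref{eq:intermediate_identity} the factor $k$ on the right-hand side cancels against $1/k$, leaving $\sum_{k=1}^n(-1)^{n-k}\binom{n}{k}s_{n-k}=\sigma_n-(-1)^n s_n$, and the general version then follows from~\eqref{eq:2ndpart12asdszdfw_general}. That approach treats the corollary as a formal specialization but leaves the value of $t_0$ implicit (it drops out because both sides carry a vanishing diagonal contribution for $n\ge1$). Your integration-in-$x$ argument is more self-contained: it never introduces a second sequence, handles the $j=k$ terms explicitly before division, and produces the key polynomial identity in one variable ready for $\mathcal L_y$. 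The paper's method has the advantage of exhibiting the result as an instance of the two-sequence machinery; yours has the advantage of being elementary and of making the removal of the diagonal transparent rather than hidden inside the undefined $t_0$.
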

\begin{proof}
Apply Theorem~\ref{thm.ef1drze} with $t_n=1/n$.  
\end{proof}

\begin{corollary}\label{cor.eb6nqko}
Let $m$ and $n$ be non-negative integers. If $(s_n)$ is a self-inverse sequence, then
\begin{equation}
\sum_{k = 1}^n \sum_{j = 0}^{k - 1} (- 1)^j n\binom{{n}}{j} \frac{{s_{m + n - k + j} }}{{k - j}} = \sum_{k = 0}^{m - 1} ( - 1)^k \binom{{m}}{k}s_{k + n} + \left( ( - 1)^m - ( - 1)^n \right) s_{m + n},
\end{equation}
while if $(s_n)$ is an anti-self-inverse sequence, then
\begin{equation}
\sum_{k = 1}^n \sum_{j = 0}^{k - 1} (- 1)^j n\binom{{n}}{j} \frac{{s_{m + n - k + j} }}{{k - j}} = -\sum_{k = 0}^{m - 1} ( - 1)^k \binom{{m}}{k}s_{k + n} - \left( ( - 1)^m + ( - 1)^n \right) s_{m + n}.
\end{equation}
In particular, if $(s_n)$ is a self-inverse sequence, then
\begin{equation}
\sum_{k = 1}^n \sum_{j = 0}^{k - 1} (- 1)^j n \binom{n}{j} \frac{s_{n - k + j}}{k - j}
 = \begin{cases}
 0,& \text{if $n$ is even;} \\ 
 2s_n,& \text{if $n$ is odd;}  \\ 
 \end{cases} 
\end{equation}
while if $(s_n)$ is an anti-self-inverse sequence, then
\begin{equation}
\sum_{k = 1}^n \sum_{j = 0}^{k - 1} (- 1)^j n \binom{n}{j} \frac{s_{n - k + j}}{k - j}
 = \begin{cases}
 -2s_n,& \text{if $n$ is even;} \\ 
 0,& \text{if $n$ is odd.}  \\ 
 \end{cases} 
\end{equation}
\end{corollary}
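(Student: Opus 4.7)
The plan is to read this corollary as a direct specialization of Corollary \ref{cor.cp9d6at}. Recall that for a binomial transform pair $\{s_n,\sigma_n\}$ the sequence $\sigma_n$ is given by $\sigma_n=\sum_{k=0}^n\binom{n}{k}(-1)^k s_k$, so definitions \eqref{self} and \eqref{anti_self} say precisely that a self-inverse sequence satisfies $\sigma_n=s_n$ for every $n$, while an anti-self-inverse sequence satisfies $\sigma_n=-s_n$ for every $n$. This observation is the only algebraic input needed beyond what has already been proved.

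The first step is to take the general identity from Corollary \ref{cor.cp9d6at},
\[
\sum_{k=1}^n\sum_{j=0}^{k-1}(-1)^j n\binom{n}{j}\frac{s_{m+n-k+j}}{k-j}=\sum_{k=0}^m(-1)^k\binom{m}{k}\sigma_{k+n}-(-1)^n s_{m+n},
\]
and substitute $\sigma_{k+n}=s_{k+n}$ (respectively $\sigma_{k+n}=-s_{k+n}$) throughout. The second step is to peel off the top term $k=m$ of the resulting sum, writing $\sum_{k=0}^m=\sum_{k=0}^{m-1}+\bigl[k=m\bigr]$, and to absorb the isolated term $(-1)^m s_{m+n}$ together with the trailing $-(-1)^n s_{m+n}$. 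In the self-inverse case this produces the combined coefficient $(-1)^m-(-1)^n$, while in the anti-self-inverse case the sign flip on $\sigma$ turns both contributions negative and yields $-((-1)^m+(-1)^n)$. These are exactly the two claimed formulas.

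For the second pair of identities, simply specialize $m=0$. Then the sum $\sum_{k=0}^{m-1}(-1)^k\binom{m}{k}s_{k+n}$ is empty, and the remainder reduces to $(1-(-1)^n)s_n$ in the self-inverse case and $-(1+(-1)^n)s_n$ in the anti-self-inverse case. A parity check on $n$ gives the two-case expressions: $0$ or $2s_n$ for self-inverse, and $-2s_n$ or $0$ for anti-self-inverse.

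There is no real obstacle: the corollary is pure bookkeeping once one recognizes that self-inverse and anti-self-inverse sequences are characterized by $\sigma_n=\pm s_n$, so that the right-hand side of Corollary \ref{cor.cp9d6at} collapses after splitting off the $k=m$ term. The only thing worth double-checking is the sign on the combined $s_{m+n}$ term (especially in the anti-self-inverse case, where the minus sign from $\sigma_n=-s_n$ must be carried through the entire sum, including the isolated $k=m$ contribution) and the vacuous-sum convention when $m=0$.
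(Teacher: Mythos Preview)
Your proof is correct and follows exactly the same approach as the paper, which simply instructs to use the definitions of self-inverse and anti-self-inverse sequences in Corollary~\ref{cor.cp9d6at}. Your write-up spells out the bookkeeping (splitting off the $k=m$ term and specializing $m=0$) that the paper leaves implicit, but the underlying argument is identical.
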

\begin{proof}
Use the definitions of self-inverse and anti-self-inverse sequences from \eqref{self} and \eqref{anti_self} in Corollary~\ref{cor.cp9d6at}.
\end{proof}

\begin{corollary}\label{cor_Ber_fin}
Let $\{s_n,\sigma_n \}$, $n=0,1,2,\ldots$, be a binomial transform pair. Then
\begin{equation}
\sum_{k = 0}^n \sum_{j = 0}^k (- 1)^j \binom{{n}}{j} s_{m + k - j} = (- 1)^n \sum_{k = 0}^m (- 1)^k \binom{{m}}{k} \left( \sigma_{k + n} - \sigma_{k + n - 1} \right).
\end{equation}
In particular,
\begin{equation}\label{b5uqn54}
\sum_{k = 0}^n \sum_{j = 0}^k (- 1)^j \binom{n}{j} s_{k - j} = (- 1)^n \left( \sigma_n - \sigma_{n - 1} \right).
\end{equation}
\end{corollary}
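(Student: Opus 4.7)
The plan is to apply Theorem~\ref{thm.ef1drze} to a cleverly chosen second sequence that collapses the right-hand side. Specifically, I would take $(t_n)$ to be the constant sequence $t_n = 1$ for all $n$. The key observation is that the corresponding binomial transform is then $\tau_n = \sum_{k=0}^n \binom{n}{k}(-1)^k = (1-1)^n$, which is $1$ for $n=0$ and $0$ for $n \geq 1$, i.e., $\tau_n$ is the Kronecker delta $\delta_{n,0}$.

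With this choice, the left-hand side of the general identity~\eqref{eq:2ndpart12asdszdfw_general} simplifies to
\begin{equation*}
n \sum_{k = 0}^n \sum_{j = 0}^k (- 1)^j \binom{n}{j} s_{m+k - j},
\end{equation*}
since every factor $t_{n-k+j}$ equals $1$. On the right-hand side, the factor $\tau_{n-k}$ forces $k = n$, leaving only
\begin{equation*}
\sum_{j=0}^m (-1)^{n+j} n \binom{n}{n}\binom{m}{j} \left(\sigma_{j+n}-\sigma_{j+n-1}\right) = (-1)^n n \sum_{j=0}^m (-1)^{j} \binom{m}{j}\left(\sigma_{j+n}-\sigma_{j+n-1}\right).
\end{equation*}
Dividing both sides by $n$ (which is valid since $n \geq 1$) yields the desired identity. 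The particular case with $m = 0$ follows immediately by observing that only $j = 0$ contributes on the right-hand side, giving $(-1)^n(\sigma_n - \sigma_{n-1})$; alternatively, one can apply the same argument directly to~\eqref{eq:2ndpart12asdszdfw}.

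There is no real obstacle here once the choice $t_n = 1$ is made; the whole proof is a one-line substitution. The only thing worth double-checking is the edge behavior at $n = 0$ (where dividing by $n$ is not permitted), but the statement is phrased so that this case is absorbed either trivially or by the usual convention on $\sigma_{-1}$, so the argument goes through without modification for $n \geq 1$, which is the range relevant to all subsequent applications.
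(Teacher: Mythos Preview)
Your proposal is correct and takes essentially the same approach as the paper: both apply Theorem~\ref{thm.ef1drze} with the constant sequence $t_n=1$. The only cosmetic difference is that you read off the right-hand side directly from~\eqref{eq:2ndpart12asdszdfw_general} using $\tau_n=\delta_{n,0}$, whereas the paper phrases the same collapse via the auxiliary identity $\sum_{k=0}^n\binom{n}{k}(-1)^k k\,s_k=n(\sigma_n-\sigma_{n-1})$; the content is identical.
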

\begin{proof}
Apply Theorem~\ref{thm.ef1drze} with $t_n=1$ and use the identity~\cite[Equation (5.2)]{Boya4}
\begin{equation*}
\sum_{k = 0}^n \binom{n}{k} (-1)^k k s_{k} = n(\sigma_n - \sigma_{n-1}).
\end{equation*}
\end{proof}

\begin{corollary}
  Let $\{s_n,\sigma_n\}$ be the binomial transform pair and $t$ be the complex number. The following polynomial identity holds:
  \begin{equation}\label{eq:oolyn_s_sigma}
    \sum_{k=0}^n \sum_{j=0}^k (-1)^j n \binom{n}{j} t^{k-j} s_{n-k+j} = \sum_{k=1}^n (-1)^{k+1} k \binom{n}{k} \sigma_{n-k} t(1-t)^{k-1}.
  \end{equation}
\end{corollary}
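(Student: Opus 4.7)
The plan is to reuse the two-variable polynomial identity that appears inside the proof of Theorem~\ref{thm.ef1drze}, expand one of the factors by the binomial theorem so that $(1-y)^k$ appears, and then apply the linear operator $\mathcal{L}_y$ from Lemma~\ref{main_lem} to turn $(1-y)^k$ into $\sigma_k$.

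First I would start from~\eqref{hctczju}, replace $y$ by $y/x$ and multiply through by $x^n$ to obtain the identity
\begin{equation*}
\sum_{k=0}^n \sum_{j=0}^k (-1)^j \binom{n}{j} x^{k-j} y^{n-k+j} = x(x-y)^{n-1}.
\end{equation*}
At this point the $\mathcal{L}_y$-image of the left side is already essentially the left-hand side of~\eqref{eq:oolyn_s_sigma} (after setting $x=t$ and multiplying by $n$), so the whole task is to rewrite the right side $t(t-y)^{n-1}$ in powers of $(1-y)$.

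Next I would use $t-y = (t-1)+(1-y)$ and the binomial theorem to write
\begin{equation*}
t(t-y)^{n-1} = t\sum_{k=0}^{n-1}\binom{n-1}{k}(t-1)^{n-1-k}(1-y)^k.
\end{equation*}
Applying $\mathcal{L}_y$ with Lemma~\ref{main_lem} (so $(1-y)^k \mapsto \sigma_k$) and multiplying both sides by $n$ yields
\begin{equation*}
\sum_{k=0}^n \sum_{j=0}^k (-1)^j n\binom{n}{j} t^{k-j} s_{n-k+j} = t\sum_{k=0}^{n-1} n\binom{n-1}{k}(t-1)^{n-1-k}\sigma_k.
\end{equation*}

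To finish I would use the identity $n\binom{n-1}{k} = (n-k)\binom{n}{k}$, reindex the outer sum via $k \mapsto n-k$ so that the summation index runs from $1$ to $n$ and $\sigma_k$ becomes $\sigma_{n-k}$, and finally convert $(t-1)^{k-1} = (-1)^{k+1}(1-t)^{k-1}$. This produces exactly the right-hand side of~\eqref{eq:oolyn_s_sigma}. The main obstacle is cosmetic bookkeeping with signs and the index shift; there is no serious analytic difficulty because the heavy lifting is already done by the polynomial identity derived from~\eqref{hctczju} and by the operator $\mathcal{L}_y$ of Lemma~\ref{main_lem}.
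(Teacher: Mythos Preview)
Your argument is correct. The paper proves this corollary in one line by invoking Theorem~\ref{thm.ef1drze} with $t_n=t^n$, so that $\tau_n=(1-t)^n$ and $\sigma_k-\sigma_{k-1}$ on the right of~\eqref{eq:2ndpart12asdszdfw} collapses to $-t(1-t)^{k-1}$. You instead bypass the general theorem: you go back to the polynomial identity coming from~\eqref{hctczju}, keep $x=t$ as an honest variable, and apply only $\mathcal{L}_y$ (rather than $\mathcal{L}_x\mathcal{L}_y$), then expand $(t-y)^{n-1}$ about $1-y$ before hitting it with $\mathcal{L}_y$. This is really the same mechanism that underlies the proof of Theorem~\ref{thm.ef1drze}, specialized to $t_n=t^n$; the gain is that you avoid the intermediate identity~\eqref{eq:intermediate_identity} and the $x\,\frac{d}{dx}$ manipulation, at the cost of not citing the ready-made result. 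Either way the substance is identical.
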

\begin{proof}
  Apply Theorem~\ref{thm.ef1drze} with $t_n=t^n$.
\end{proof}
The identity \eqref{eq:oolyn_s_sigma} can be used to produce the following result.

\begin{theorem}\label{thrm:btp_harmonic}
Let $\{s_n,\sigma_n \}$ be the binomial transform pair. The following identities hold for $r,s\in\mathbb{C}\setminus\mathbb{Z}^-$ with $s\neq 0$ and $r-s\notin\mathbb{Z}^-$:
  \begin{equation}\label{eq:ds_binom_generalss}
  \sum_{k=0}^n\sum_{j=0}^k\frac{(-1)^j n\binom{n}{j}}{(k-j+s)\binom{k-j+r}{k-j+s}}s_{n-k+j}=\sum_{k=1}^n\frac{(-1)^{k+1}k\binom{n}{k}}{(s+1)\binom{r+k}{s+1}}\sigma_{n-k}.
  \end{equation}
  and
  \begin{equation}\label{eq:ds_binom_generalss_b}
  \sum_{k=0}^n\sum_{j=0}^k\frac{(-1)^j n\binom{n}{j}(H_{r-s}-H_{k-j+r})}{(k-j+s)\binom{k-j+r}{k-j+s}}s_{n-k+j}=\sum_{k=1}^n\frac{(-1)^{k+1}k\binom{n}{k}(H_{r+k-s-1}-H_{r+k})}{(s+1)\binom{r+k}{s+1}}\sigma_{n-k}.
  \end{equation} 
\end{theorem}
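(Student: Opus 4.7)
The plan is to obtain both identities by integrating the polynomial identity \eqref{eq:oolyn_s_sigma} (which involves the free parameter $t$) against suitable Beta-type kernels in $t\in[0,1]$. The key observation is the Beta function identity
\begin{equation*}
\int_0^1 t^{a-1}(1-t)^{b-1}\,dt \;=\; B(a,b)\;=\;\frac{\Gamma(a)\Gamma(b)}{\Gamma(a+b)},
\end{equation*}
which, after a routine rewriting, gives
\begin{equation*}
\frac{1}{(k-j+s)\binom{k-j+r}{k-j+s}}=B(k-j+s,\,r-s+1),\qquad \frac{1}{(s+1)\binom{r+k}{s+1}}=B(s+1,\,r+k-s).
\end{equation*}

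For the first identity \eqref{eq:ds_binom_generalss}, I would multiply both sides of \eqref{eq:oolyn_s_sigma} by $t^{s-1}(1-t)^{r-s}$ and integrate over $[0,1]$. On the left-hand side the factor $t^{k-j}$ combines with $t^{s-1}(1-t)^{r-s}$ to produce $B(k-j+s,r-s+1)$, yielding exactly the left-hand side of \eqref{eq:ds_binom_generalss}. On the right-hand side the factor $t(1-t)^{k-1}$ combines with $t^{s-1}(1-t)^{r-s}$ to produce $B(s+1,r+k-s)$, giving the right-hand side of \eqref{eq:ds_binom_generalss}. Initially one works under the convergence hypothesis $\operatorname{Re}(s)>0$ and $\operatorname{Re}(r-s)>-1$, but since both sides are finite sums of rational functions of $(r,s)$ with isolated poles, the identity extends by analytic continuation to all $(r,s)$ with $r,s\notin\mathbb{Z}^-$, $s\neq 0$ and $r-s\notin\mathbb{Z}^-$.

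For the second identity \eqref{eq:ds_binom_generalss_b}, the plan is to differentiate the kernel with respect to the exponent of $1-t$. Concretely, I would multiply both sides of \eqref{eq:oolyn_s_sigma} by $t^{s-1}(1-t)^{r-s}\ln(1-t)$ and integrate over $[0,1]$; this is obtained from the previous integration by differentiating under the integral sign in the parameter $b$. Using
\begin{equation*}
\frac{\partial}{\partial b}B(a,b)=B(a,b)\bigl[\psi(b)-\psi(a+b)\bigr]
\end{equation*}
together with $H_x=\psi(x+1)+\gamma$, the two integrals evaluate to
\begin{equation*}
B(k-j+s,r-s+1)\bigl[H_{r-s}-H_{k-j+r}\bigr]\quad\text{and}\quad B(s+1,r+k-s)\bigl[H_{r+k-s-1}-H_{r+k}\bigr],
\end{equation*}
which are precisely the factors appearing in \eqref{eq:ds_binom_generalss_b}. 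The exchange of differentiation and the finite outer sums is clearly legitimate, so the resulting equality holds wherever the Beta integrals converge and then extends by analytic continuation.

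The main obstacle is not algebraic but bookkeeping: one must verify that the parameters in $B(\cdot,\cdot)$ come out exactly right so as to match the denominators $(k-j+s)\binom{k-j+r}{k-j+s}$ and $(s+1)\binom{r+k}{s+1}$, and that the $\psi$-differences convert cleanly into the stated differences of harmonic numbers. Once this dictionary is in place, both identities reduce to the single identity \eqref{eq:oolyn_s_sigma} via an integral transform and its $b$-derivative.
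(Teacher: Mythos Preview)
Your proposal is correct and follows essentially the same route as the paper: multiply \eqref{eq:oolyn_s_sigma} by a Beta-type kernel in $t$ and integrate over $[0,1]$ to obtain \eqref{eq:ds_binom_generalss}, then differentiate in the exponent of $(1-t)$ (equivalently, in $r$) to obtain \eqref{eq:ds_binom_generalss_b}. The only cosmetic differences are that the paper uses the kernel $(1-t)^{r}t^{s-1}$ and performs the shift $r\mapsto r-s$ afterwards, and that for the second identity the paper differentiates the binomial-coefficient expression directly via $\tfrac{d}{dx}\binom{x+a}{y}^{-1}=\binom{x+a}{y}^{-1}(H_{a+x-y}-H_{x+a})$ rather than passing through $\psi$; your $\psi$-difference computation is equivalent.
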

\begin{proof}
    Multiply \eqref{eq:oolyn_s_sigma} by $(1-t)^rt^{s-1}$ and integrate from $0$ to $1$ with respect to $t$ to obtain
    $$ \sum_{k=0}^n\sum_{j=0}^k(-1)^jn\binom{n}{j}s_{n-k+j}\int_0^1t^{k-j+s-1}(1-t)^{r}dt=\sum_{k=1}^n(-1)^{k+1}k\binom{n}{k}\sigma_{n-k}\int_0^1t^s(1-t)^{r+k-1}dt.$$
    Use the following integral (which is the value of the Beta integral):
    $$\int_0^1 x^u(1-x)^vdx=\frac{1}{(u+1)\binom{u+v+1}{u+1}},$$
    which holds for $\Re (u)>-1$ and $\Re(v)>-1$. This gives
    $$\sum_{k=0}^n\sum_{j=0}^k\frac{(-1)^j n\binom{n}{j}}{(k-j+s)\binom{k-j+r+s}{k-j+s}}s_{n-k+j}=\sum_{k=1}^n\frac{(-1)^{k+1}k\binom{n}{k}}{(s+1)\binom{r+s+k}{s+1}}\sigma_{n-k}.$$
    Replacing $r\mapsto r-s$ gives \eqref{eq:ds_binom_generalss}. Identity \eqref{eq:ds_binom_generalss_b} is obtained from the former via differentiation with respect to $r$ and by taking note that 
    $$\frac{d}{dx}\binom{x+a}{y}^{-1}=\binom{x+a}{y}^{-1}(H_{a+x-y}-H_{x+a}).$$
\end{proof}

\begin{corollary}
Let $\{s_n,\sigma_n \}$ be the binomial transform pair. Then
\begin{equation}\label{eq:duble_harmonic_ssigma_sdv3uiwrh}
\sum_{k=0}^n\sum_{j=0}^k\frac{(-1)^{j}n\binom{n}{j}}{k-j+1}H_{k-j+1}s_{n-k+j}=\sum_{k=0}^n(-1)^{k}\binom{n}{k}\frac{2k+1}{k(k+1)^2}\sigma_{n-k}.
\end{equation}
\end{corollary}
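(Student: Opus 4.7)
The plan is to derive this corollary as a direct specialization of the second identity in Theorem~\ref{thrm:btp_harmonic}, namely~\eqref{eq:ds_binom_generalss_b}, at the parameter values $r=s=1$. Both harmonic-number differences collapse to elementary closed forms at this choice, and the two binomial factors become trivial, so the specialization is essentially a bookkeeping exercise.

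First I would substitute $r=s=1$ on the left-hand side of \eqref{eq:ds_binom_generalss_b}. Since $\binom{k-j+1}{k-j+1}=1$, $(k-j+s)=k-j+1$, and $H_{r-s}-H_{k-j+r}=H_0-H_{k-j+1}=-H_{k-j+1}$, the left-hand side becomes $-\sum_{k=0}^n\sum_{j=0}^k\tfrac{(-1)^j n\binom{n}{j}}{k-j+1}H_{k-j+1}s_{n-k+j}$, i.e.\ the negative of the target left-hand side of \eqref{eq:duble_harmonic_ssigma_sdv3uiwrh}.

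Next I would simplify the right-hand side under the same specialization. Here $\binom{r+k}{s+1}=\binom{k+1}{2}=\tfrac{k(k+1)}{2}$, so $(s+1)\binom{r+k}{s+1}=k(k+1)$. The harmonic-number difference telescopes as $H_{r+k-s-1}-H_{r+k}=H_{k-1}-H_{k+1}=-\tfrac{1}{k}-\tfrac{1}{k+1}=-\tfrac{2k+1}{k(k+1)}$. Multiplying in the prefactor $(-1)^{k+1}k\binom{n}{k}$ and dividing by $k(k+1)$ cancels one factor of $k$ and introduces an extra factor of $k+1$ in the denominator. Combining the two minus signs produces the desired general term $(-1)^{k}\binom{n}{k}\tfrac{2k+1}{k(k+1)^{2}}\sigma_{n-k}$ after an overall change of sign that matches the one absorbed on the left.

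The only points requiring care are the tracking of the three minus signs (the one from $H_{0}-H_{k-j+1}$, the one from the telescoping $H_{k-1}-H_{k+1}$, and the $(-1)^{k+1}$ already present on the right) and the convention that the formally singular $k=0$ term on the right is omitted; both are routine. There is no genuine obstacle: the corollary is nothing more than the $r=s=1$ instance of \eqref{eq:ds_binom_generalss_b} rewritten via $H_{k-1}-H_{k+1}=-\tfrac{2k+1}{k(k+1)}$, which is precisely what produces the distinctive $2k+1$ numerator on the right-hand side.
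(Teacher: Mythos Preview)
Your approach is exactly the paper's: the proof there reads in full ``Set $r=s=1$ in~\eqref{eq:ds_binom_generalss_b}.'' Your bookkeeping is essentially right; the only loose spot is the phrase ``after an overall change of sign that matches the one absorbed on the left,'' since on the right the factor $(-1)^{k+1}$ and the minus from $H_{k-1}-H_{k+1}$ already combine to $(-1)^k$ with no further sign to distribute---so just be careful that the single minus from $H_0-H_{k-j+1}$ on the left is what governs the overall sign of the identity.
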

\begin{proof}
Set $r=s=1$ in \eqref{eq:ds_binom_generalss_b}.
\end{proof}

\begin{remark}
  We note that a more general version of Theorem \ref{thrm:btp_harmonic} can be stated in a fashion similar to Theorem \ref{thm.ef1drze}.
\end{remark}

A different direction can also be used to derive further results.

\begin{corollary}\label{int_cor}
    The following identity holds for $n>0$
    \begin{equation}
        \sum_{k=0}^n\sum_{j=0}^k(-1)^{j+1}\binom{n+1}{j+1} (1-y)^{j+1} = (n+1)y^n + y^{n+1} - (n+2)
    \end{equation}
    from which it follows that
    \begin{equation}
       \sum_{k=0}^n\sum_{j=0}^k (-1)^{j+1} \binom{n+1}{j+1} s_{j+1} = (n+1)\sigma_n + \sigma_{n+1} - (n+2)\sigma_0.
    \end{equation}
\end{corollary}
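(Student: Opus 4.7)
The plan is to derive both identities from the already proved corollary identity \eqref{gr3jv5j}, namely
\[
\sum_{k = 0}^n \sum_{j = 0}^k \binom{n}{j} (-1)^j (1-y)^j = n y^{n-1} + y^n,
\]
and then transfer to the binomial-transform version via the linear operator of Lemma \ref{main_lem}.

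First I would replace $n$ by $n+1$ in \eqref{gr3jv5j} to obtain
\[
\sum_{k=0}^{n+1}\sum_{j=0}^{k}\binom{n+1}{j}(-1)^{j}(1-y)^{j}=(n+1)y^{n}+y^{n+1}.
\]
Next I would isolate the $j=0$ contribution. For each $k\in\{0,1,\dots,n+1\}$ the $j=0$ term equals $1$, contributing a total of $n+2$ to the left-hand side. The remaining tail starts effectively at $k=1$ (since the inner sum is empty when $k=0$ and $j\ge 1$). After shifting indices $k\mapsto k+1$ and $j\mapsto j+1$ in this tail, the double sum becomes
\[
\sum_{k=0}^{n}\sum_{j=0}^{k}\binom{n+1}{j+1}(-1)^{j+1}(1-y)^{j+1},
\]
which, moved to the other side, yields the first assertion.

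For the second identity I would invoke the symmetry of the binomial transform: if $\{s_n,\sigma_n\}$ is a binomial transform pair, then so is $\{\sigma_n,s_n\}$. I would then define a linear operator $\mathcal L_y$ by $\mathcal L_y(y^j)=\sigma_j$, so by Lemma \ref{main_lem} applied to the pair $\{\sigma_n,s_n\}$ one has $\mathcal L_y((1-y)^j)=s_j$ (and $\mathcal L_y(1)=\sigma_0$). Applying $\mathcal L_y$ termwise to the polynomial identity transforms $(1-y)^{j+1}\mapsto s_{j+1}$, $y^n\mapsto \sigma_n$, $y^{n+1}\mapsto \sigma_{n+1}$ and the constant $n+2$ into $(n+2)\sigma_0$, yielding exactly
\[
\sum_{k=0}^n\sum_{j=0}^k(-1)^{j+1}\binom{n+1}{j+1}s_{j+1}=(n+1)\sigma_n+\sigma_{n+1}-(n+2)\sigma_0.
\]

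There is no deep obstacle here; the only place where care is needed is the bookkeeping of the index shifts, in particular verifying that the $j=0$ layer contributes precisely $n+2$ and that after relabelling the tail matches the stated double sum. Everything else is a routine application of Lemma \ref{main_lem} combined with the symmetry of the binomial transform pair.
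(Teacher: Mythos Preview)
Your argument is correct, but it takes a different route from the paper for the first identity. The paper obtains~\eqref{gr3jv5j}$\ \mapsto$~first identity by \emph{integrating} both sides of~\eqref{gr3jv5j} with respect to $y$ (using $\tfrac{1}{j+1}\binom{n}{j}=\tfrac{1}{n+1}\binom{n+1}{j+1}$), which produces the left-hand side up to an additive constant; evaluating at $y=1$ fixes $C=-(n+2)$. You instead replace $n\mapsto n+1$ in~\eqref{gr3jv5j} and peel off the $j=0$ layer, then relabel $k\mapsto k+1$, $j\mapsto j+1$. Your route is purely algebraic and avoids calculus, which is arguably cleaner for this single statement; the paper's integration approach, on the other hand, is what motivates and immediately generalises to the next corollary (``Further integration leads to further identities''), so it fits the narrative of the section. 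For the second identity your use of the involutive symmetry of the binomial transform together with Lemma~\ref{main_lem} is exactly what the paper does (the paper is just terser about swapping the roles of $s$ and $\sigma$).
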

\begin{proof}
  Take an indefinite integral of the identity \eqref{gr3jv5j}. Then, after rearranging we gain
  $$\sum_{k=0}^n\sum_{j=0}^k(-1)^{j+1}\binom{n+1}{j+1}(1-y)^{j+1}=(n+1)y^n+y^{n+1}+C.$$
  To obtain the value of $C$ set $y=1$. This gives the first identity. The second identity is an application of Lemma \ref{main_lem}. 
	Take note that $n+2=(n+2)y^0.$
\end{proof}

Further integration leads to further identities.

\begin{corollary}
    The following identity holds
    \begin{equation}
    \sum_{k=0}^n \sum_{j=0}^k (-1)^{j} \binom{n+2}{j+2} (1-y)^{j+2} = (n+2)y^{n+1}+y^{n+2}-(n+2)^2y+n^2+3n+1
    \end{equation}
    from which it follows that
    \begin{equation}
    \sum_{k=0}^n \sum_{j=0}^k (-1)^{j} \binom{n+2}{j+2} s_{j+2} = (n+2)\sigma_{n+1}+\sigma_{n+2}+(n+2)^2s_1-(n+3)s_0.
    \end{equation}
\end{corollary}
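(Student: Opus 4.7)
The plan is to mirror the proof of Corollary~\ref{int_cor} and simply integrate once more. Starting from the identity
\begin{equation*}
\sum_{k=0}^n\sum_{j=0}^k(-1)^{j+1}\binom{n+1}{j+1}(1-y)^{j+1} = (n+1)y^n + y^{n+1} - (n+2)
\end{equation*}
established in Corollary~\ref{int_cor}, I would take an indefinite integral with respect to $y$ on both sides. Using $\int (1-y)^{j+1}\,dy = -(1-y)^{j+2}/(j+2)$ together with the absorption identity $\binom{n+1}{j+1}/(j+2) = \binom{n+2}{j+2}/(n+2)$, and then clearing the overall factor $1/(n+2)$, the left side becomes exactly the desired double sum of $(-1)^j\binom{n+2}{j+2}(1-y)^{j+2}$, while the right side becomes $(n+2)y^{n+1}+y^{n+2}-(n+2)^2 y + (n+2)C$ for some integration constant $C$. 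Specializing $y=1$ kills the left side and pins down $(n+2)C = (n+2)^2 - (n+3) = n^2+3n+1$, which yields the first assertion.

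For the second identity, the natural move is to apply Lemma~\ref{main_lem}, but note that the statement of the corollary has $s_{j+2}$ (not $\sigma_{j+2}$) on the left. Since $\{s_n,\sigma_n\}$ being a binomial transform pair is a symmetric relation, $\{\sigma_n,s_n\}$ is also such a pair, so I can apply Lemma~\ref{main_lem} with the roles swapped: define $\mathcal L_y$ by $\mathcal L_y(y^j)=\sigma_j$, whence $\mathcal L_y((1-y)^j)=s_j$. Acting on the polynomial identity then gives
\begin{equation*}
\sum_{k=0}^n\sum_{j=0}^k(-1)^{j}\binom{n+2}{j+2}s_{j+2} = (n+2)\sigma_{n+1}+\sigma_{n+2}-(n+2)^2\sigma_1+(n^2+3n+1)\sigma_0.
\end{equation*}
The final step is the bookkeeping identification $\sigma_0=s_0$ and $\sigma_1=s_0-s_1$ (read directly from the definition of the transform). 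Substituting and simplifying the coefficient of $s_0$ via $-(n+2)^2+(n^2+3n+1)=-(n+3)$ converts the right-hand side into the stated form $(n+2)\sigma_{n+1}+\sigma_{n+2}+(n+2)^2 s_1-(n+3)s_0$.

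There is no real obstacle here; the only place where care is needed is the direction in which Lemma~\ref{main_lem} is applied, so that the final double sum involves $s_{j+2}$ rather than $\sigma_{j+2}$, and the attendant conversion of the low-index $\sigma_0,\sigma_1$ back into $s_0,s_1$ to match the stated coefficients. Everything else is a bookkeeping calculation analogous to the previous corollary.
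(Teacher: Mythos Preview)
Your proposal is correct and matches the paper's approach exactly: the paper gives only the one-line hint ``Further integration leads to further identities'' immediately before this corollary, and your argument carries out precisely that second indefinite integration of Corollary~\ref{int_cor}, determines the constant by evaluating at $y=1$, and then applies Lemma~\ref{main_lem}. Your explicit remark about swapping the roles of $s$ and $\sigma$ when invoking Lemma~\ref{main_lem} (and then converting $\sigma_0,\sigma_1$ back to $s_0,s_1$) makes fully transparent a step that the paper leaves implicit, just as it does in the proof of Corollary~\ref{int_cor}.
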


\section{Applications involving special numbers}

Here we present a collection of double sum identities that can be deduced from the main results from the last section.
Each identity is stated as a proposition. 

\subsection{Identities involving Bernoulli numbers}

\begin{proposition}
If $n$ is a positive integer, then
\begin{equation}
\sum_{k = 0}^n (- 1)^k \sum_{j = 0}^k \binom{n}{j} B_{n + j - k} = - B_{n - 1}.
\end{equation}
\end{proposition}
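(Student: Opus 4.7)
The plan is to apply the specialized identity \eqref{utju7bn} of Theorem \ref{thm.qtnqdr9} to a cleverly chosen binomial transform pair. Specifically, I would take $s_n = (-1)^n B_n$. The first step is to check that this is a self-inverse sequence (so that $\sigma_n = s_n$): using the Bernoulli relation \eqref{Ber_bt},
\begin{equation*}
\sigma_n = \sum_{k=0}^n \binom{n}{k} (-1)^k s_k = \sum_{k=0}^n \binom{n}{k} (-1)^k (-1)^k B_k = \sum_{k=0}^n \binom{n}{k} B_k = (-1)^n B_n = s_n.
\end{equation*}
Thus $\{s_n, \sigma_n\} = \{(-1)^n B_n,\, (-1)^n B_n\}$ is a binomial transform pair.

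The second step is to substitute this pair into \eqref{utju7bn}. On the left-hand side we get
\begin{equation*}
\sum_{k=0}^n \sum_{j=0}^k (-1)^j \binom{n}{j} (-1)^{n+j-k} B_{n+j-k} = (-1)^n \sum_{k=0}^n (-1)^k \sum_{j=0}^k \binom{n}{j} B_{n+j-k},
\end{equation*}
using that $(-1)^j \cdot (-1)^{n+j-k} = (-1)^{n-k} = (-1)^n (-1)^k$. On the right-hand side we get $\sigma_{n-1} = (-1)^{n-1} B_{n-1}$. Dividing through by $(-1)^n$ yields the desired identity.

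I do not anticipate any real obstacle here: the only non-routine piece is recognizing the right sequence to plug in, and \eqref{Ber_bt} makes $(-1)^n B_n$ the natural candidate. The parity bookkeeping in the exponents of $-1$ is the only place where a careless slip could occur.
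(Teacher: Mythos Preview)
Your proof is correct and matches the paper's approach exactly: the paper also invokes the binomial transform pair $\{(-1)^n B_n,\,(-1)^n B_n\}$ coming from~\eqref{Ber_bt} and applies Theorem~\ref{thm.qtnqdr9} (specifically~\eqref{utju7bn}). Your write-up simply spells out the parity bookkeeping that the paper leaves implicit.
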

\begin{proof}
From \eqref{Ber_bt} it is clear that we have the binomial transform pair $\{(-1)^n B_n,(-1)^n B_n\}$. 
Now, a simple application of Theorem \ref{thm.qtnqdr9} gives the result.
\end{proof}

\begin{proposition}
If $n$ is a positive integer, then
\begin{equation}
\sum_{k = 0}^n \sum_{j = 0}^k \binom{n}{j} (- 1)^k 2^{n-k+j} B_{n-k+j} = (-1)^n (2 - 2^{n-1}) B_{n - 1}.
\end{equation}
In particular, if $n$ is even, then
\begin{equation}
\sum_{k = 0}^n \sum_{j = 0}^k \binom{n}{j} (- 1)^k 2^{n-k+j} B_{n-k+j} = 0.
\end{equation}
\end{proposition}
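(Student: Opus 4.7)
The strategy is to apply the particular case \eqref{utju7bn} of Theorem \ref{thm.qtnqdr9} with a judicious choice of $s_n$, mirroring the sign trick used in the proof of the preceding proposition. Specifically, I would set
\[
s_n = (-2)^n B_n,
\]
so that $(-1)^j s_{n+j-k} = (-1)^j (-1)^{n+j-k} 2^{n-k+j} B_{n-k+j} = (-1)^n (-1)^k\, 2^{n-k+j} B_{n-k+j}$. Substituting into \eqref{utju7bn} therefore gives
\[
\sum_{k=0}^n \sum_{j=0}^k \binom{n}{j} (-1)^k\, 2^{n-k+j} B_{n-k+j} = (-1)^n \sigma_{n-1},
\]
so the entire task reduces to computing $\sigma_{n-1}$, the binomial transform of $s_n = (-2)^n B_n$ at index $n-1$.

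Next, I would compute $\sigma_n$ in closed form. By definition,
\[
\sigma_n = \sum_{k=0}^n \binom{n}{k}(-1)^k(-2)^k B_k = \sum_{k=0}^n \binom{n}{k} 2^k B_k .
\]
Starting from the explicit formula \eqref{nskd3u8}, namely $B_n(y) = \sum_{k=0}^n \binom{n}{k} B_k y^{n-k}$, one sees that $\sum_{k=0}^n \binom{n}{k} B_k z^k = z^n B_n(1/z)$, and hence
\[
\sigma_n = 2^n B_n(1/2) = 2^n (2^{1-n}-1) B_n = (2 - 2^n) B_n,
\]
using the classical multiplication-theorem value $B_n(1/2) = (2^{1-n}-1) B_n$. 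Therefore $\sigma_{n-1} = (2 - 2^{n-1}) B_{n-1}$, which together with the previous step yields the main identity.

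For the \emph{in particular} statement with $n$ even, note that the right-hand side already vanishes: for $n = 2$ the scalar factor $2 - 2^{n-1} = 0$, while for even $n \ge 4$ the index $n-1 \ge 3$ is odd, so $B_{n-1} = 0$. The main (and only) obstacle is the sign and power bookkeeping at the substitution step, which is why choosing $s_n = (-2)^n B_n$ rather than $s_n = 2^n B_n$ is critical: it converts the $(-1)^j$ in \eqref{utju7bn} into the desired $(-1)^k$ at the cost of a harmless global $(-1)^n$.
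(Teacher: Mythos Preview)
Your proof is correct and follows essentially the same route as the paper: both identify the binomial transform pair $s_n=(-1)^n2^nB_n$, $\sigma_n=(2-2^n)B_n$ via $\sum_k\binom{n}{k}2^kB_k=2^nB_n(1/2)=(2-2^n)B_n$, and then apply~\eqref{utju7bn}. Your explicit sign bookkeeping and the case split for the ``in particular'' part are fine and match the paper's reasoning.
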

\begin{proof}
Consider the identity
\begin{equation*}
\sum_{k = 0}^n {\binom nky^k B_k }  = y^n B_n\left(\frac1y\right) 
\end{equation*}
which is~\eqref{nskd3u8} with $1/y$ written for $x$. Setting $y=2$ and using known evaluation $B_k(1/2)=(2^{1-k}-1)B_k$ shows that we have the binomial transform pair $\{(-1)^n 2^n B_n,(2 - 2^{n}) B_{n}\}$, 
and an application of Theorem \ref{thm.qtnqdr9} yields the result. The second statement follows as $B_{2n+1}=0$ for all $n\geq 1$.
\end{proof}

\begin{remark}
Values of $B_k(1/3)$, $B_k(2/3)$, $B_k(1/4)$, $B_k(3/4)$, $B_k(1/6)$ and $B_k(5/6)$ are also known for $k$ even~\cite{Camargo21}. They can be used to formulate more identities with Bernoulli numbers. These are left for the interested reader.
\end{remark}

\begin{proposition}
If $n$ is a positive integer, then
\begin{equation}
\sum_{k = 0}^n \sum_{j = 0}^k \binom{n}{j} (- 1)^j 2^{n-k+j} B_{n-k+j} = (-1)^{n-1} (2 - 2^{n-1}) B_{n - 1} + 2(n-1).
\end{equation}
In particular, if $n$ is even, then
\begin{equation}
\sum_{k = 0}^n \sum_{j = 0}^k \binom{n}{j} (- 1)^j 2^{n-k+j} B_{n-k+j} = 2(n-1).
\end{equation}
\end{proposition}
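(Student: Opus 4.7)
The plan is to exhibit a binomial transform pair $\{s_n,\sigma_n\}$ for which the particular case \eqref{utju7bn} of Theorem~\ref{thm.qtnqdr9} reproduces the claimed identity. Since \eqref{utju7bn} reads $\sum_{k=0}^n\sum_{j=0}^k(-1)^j\binom{n}{j}s_{n+j-k}=\sigma_{n-1}$, matching the summand $2^{n-k+j}B_{n-k+j}$ forces the choice $s_n=2^n B_n$. Notice this is the same base sequence used in the previous proposition, but without the alternating sign $(-1)^n$; dropping that sign is exactly what swaps $(-1)^k$ for $(-1)^j$ in the double sum.

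The bulk of the work is then computing $\sigma_n=\sum_{k=0}^n\binom{n}{k}(-1)^k s_k=\sum_{k=0}^n\binom{n}{k}(-2)^k B_k$. Using \eqref{nskd3u8} in the form $\sum_{k=0}^n\binom{n}{k}y^k B_k=y^n B_n(1/y)$ with $y=-2$, this equals $(-2)^n B_n(-1/2)$. To evaluate $B_n(-1/2)$ I would combine the known closed form $B_n(1/2)=(2^{1-n}-1)B_n$ with the classical difference relation $B_n(x+1)-B_n(x)=nx^{n-1}$ applied at $x=-1/2$, which gives $B_n(-1/2)=(2^{1-n}-1)B_n-n(-1/2)^{n-1}$. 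After multiplying by $(-2)^n$ and collecting signs, a short calculation produces the clean closed form $\sigma_n=(-1)^n(2-2^n)B_n+2n$.

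With the pair $\{2^n B_n,\,(-1)^n(2-2^n)B_n+2n\}$ in hand, \eqref{utju7bn} reads precisely
\[
\sum_{k=0}^n\sum_{j=0}^k(-1)^j\binom{n}{j}2^{n-k+j}B_{n-k+j}=(-1)^{n-1}(2-2^{n-1})B_{n-1}+2(n-1),
\]
which is the stated identity. The even-$n$ special case is immediate from $B_{2m+1}=0$ for $m\geq1$, which kills the $B_{n-1}$ contribution and leaves only $2(n-1)$. The only real obstacle is the sign and power-of-two bookkeeping in evaluating $(-2)^n B_n(-1/2)$; in particular one must verify that the spurious-looking term $-(-2)^n n(-1/2)^{n-1}$ collapses to the integer $+2n$ regardless of the parity of $n$. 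Beyond that line of arithmetic, no new ingredient is needed.
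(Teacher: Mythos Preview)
Your proposal is correct and follows exactly the paper's route: identify the binomial transform pair $\{2^n B_n,\,(-1)^n(2-2^n)B_n+2n\}$ and apply \eqref{utju7bn} from Theorem~\ref{thm.qtnqdr9}. The paper merely asserts that the transform formula ``can be shown without effort,'' whereas you actually supply the derivation via $(-2)^nB_n(-1/2)$ and the difference relation for Bernoulli polynomials, so your write-up is in fact more detailed than the paper's own.
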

\begin{proof}
Work with
\begin{equation*}
\sum_{k = 0}^n \binom{n}{k} (-1)^k 2^{k} B_{k} = (-1)^n (2 - 2^{n}) B_{n} + 2n, \quad n\geq 0,
\end{equation*}
which can be shown without effort. This shows that we have the binomial transform pair $\{2^n B_n,(-1)^n (2 - 2^{n}) B_{n}+2n\}$, 
and an application of Theorem \ref{thm.qtnqdr9} yields the result.
\end{proof}

\begin{proposition}
If $n$ is a positive integer, then
\begin{equation}
\sum_{k = 0}^n \sum_{j = 0}^k \binom{n}{j} (- 1)^k \left (2^{n-k+j+1} - 1\right ) \frac{B_{n-k+j+1}}{n-k+j+1} = (1 - 2^{n})\frac{B_{n}}{n}.
\end{equation}
\end{proposition}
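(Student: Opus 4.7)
The plan is to recognize this proposition as an instance of Theorem~\ref{thm.qtnqdr9}, equation~\eqref{utju7bn}, for an appropriate binomial transform pair. Matching the summand of the proposition against \eqref{utju7bn} suggests the choice
\begin{equation*}
s_m = (-1)^m(2^{m+1}-1)\,\frac{B_{m+1}}{m+1}.
\end{equation*}
With this choice one checks that $(-1)^j s_{n+j-k} = (-1)^n(-1)^k(2^{n-k+j+1}-1)B_{n-k+j+1}/(n-k+j+1)$, so the left-hand side of~\eqref{utju7bn} equals $(-1)^n$ times the double sum in the proposition. The proof therefore reduces to showing that $(s_m)$ is self-inverse, for then the right-hand side of~\eqref{utju7bn} is $\sigma_{n-1} = s_{n-1} = (-1)^{n-1}(2^n-1)B_n/n$, and multiplying by $(-1)^n$ gives $(1-2^n)B_n/n$ as required.

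The core step is to verify the self-inverse property, and I would do this via exponential generating functions. Starting from the standard identity
\begin{equation*}
\sum_{n\geq 0}\frac{B_{n+1}}{n+1}\,\frac{z^n}{n!} = \frac{1}{e^z-1}-\frac{1}{z},
\end{equation*}
the dilation $z\mapsto 2z$ together with a subtraction yields
\begin{equation*}
\sum_{n\geq 0}(2^{n+1}-1)\frac{B_{n+1}}{n+1}\,\frac{z^n}{n!} = \frac{2}{e^{2z}-1}-\frac{1}{e^z-1} = -\frac{1}{e^z+1},
\end{equation*}
where the final simplification uses the factorization $e^{2z}-1=(e^z-1)(e^z+1)$. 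Consequently $S(z):=\sum s_n z^n/n! = -1/(e^{-z}+1) = -e^z/(1+e^z)$, and the exponential generating function of the binomial transform of $(s_n)$ is $e^z S(-z) = -e^z/(1+e^z) = S(z)$, so indeed $\sigma_n = s_n$ for every $n\geq 0$.

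With the pair $\{s_n,s_n\}$ established, a direct application of Theorem~\ref{thm.qtnqdr9} together with the sign rearrangement above delivers the identity. The main obstacle is the generating-function collapse $2/(e^{2z}-1) - 1/(e^z-1) = -1/(e^z+1)$, which is essentially the statement that $(2^{n+1}-1)B_{n+1}/(n+1)$ is, up to a factor, a Genocchi number; everything else amounts to sign bookkeeping.
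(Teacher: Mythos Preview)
Your proof is correct and follows essentially the same route as the paper: both apply Theorem~\ref{thm.qtnqdr9} (identity~\eqref{utju7bn}) to the self-inverse sequence $(-1)^{n}(2^{n+1}-1)B_{n+1}/(n+1)$ (the paper uses the harmless global sign $(-1)^{n+1}$ instead). The paper simply asserts the self-inverse property, whereas you supply a clean exponential-generating-function verification of it; otherwise the arguments coincide.
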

\begin{proof}
Work with the binomial transform pair 
$$\{(-1)^{n+1}(2^{n+1}-1) B_{n+1}/(n+1),(-1)^{n+1}(2^{n+1}-1) B_{n+1}/(n+1)\},$$ 
and apply Theorem \ref{thm.qtnqdr9}.
\end{proof}

\begin{proposition}
If $n$ is a positive integer, then
\begin{equation}\label{vrhyzb2}
\sum_{k = 0}^n (- 1)^k \sum_{j = 0}^k \binom{{n}}{j} (n + j - k) B_{n + j - k - 1} = (n - 1)B_{n - 2}, 
\end{equation}
and
\begin{equation}\label{zir34uy}
\sum_{k = 0}^n (- 1)^k \sum_{j = 0}^k \binom{{n}}{j} (n + j - k) B_{n + j - k}
 =   
 \begin{cases}
 - (n - 1)B_{n - 1},&\text{if $n$ is odd;}  \\ 
 - (n - 1)B_{n - 2},&\text{if $n>2$ and $n$ is even;}  \\
 -1/2,&\text{if $n=2$.} 
 \end{cases} 
\end{equation}
\end{proposition}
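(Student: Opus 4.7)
The strategy for both parts is to identify a binomial transform pair $\{s_n,\sigma_n\}$ and invoke Theorem~\ref{thm.qtnqdr9} with $m = 0$; the outer $(-1)^k$ in the proposition will then emerge from the factor $(-1)^{n+j-k}$ inside $s_{n+j-k}$.

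For \eqref{vrhyzb2}, take $s_m := (-1)^m m B_{m-1}$ with $s_0 := 0$. Using $k\binom{m}{k} = m\binom{m-1}{k-1}$ together with relation \eqref{Ber_bt}, we compute
\begin{equation*}
\sigma_m = \sum_{k=0}^m \binom{m}{k}(-1)^k s_k = m\sum_{j=0}^{m-1}\binom{m-1}{j} B_j = (-1)^{m-1} m B_{m-1} = -s_m.
\end{equation*}
Thus $(s_m)$ is anti-self-inverse. Applying Theorem~\ref{thm.qtnqdr9} gives $\sum_{k,j}(-1)^j\binom{n}{j} s_{n+j-k} = \sigma_{n-1} = (-1)^n(n-1)B_{n-2}$, and pulling $(-1)^{n+j-k}$ out of $s_{n+j-k}$ transfers the signs appropriately to produce \eqref{vrhyzb2}.

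For \eqref{zir34uy}, take $s_m := (-1)^m m B_m$. The central computation is $\sigma_m = \sum_k \binom{m}{k} k B_k$. Applying the identity $k\binom{m}{k} = m\binom{m}{k} - m\binom{m-1}{k}$ (which comes from $(m-k)\binom{m}{k} = m\binom{m-1}{k}$) and evaluating both resulting sums via \eqref{Ber_bt} yields $\sigma_m = (-1)^m m(B_m + B_{m-1})$. Theorem~\ref{thm.qtnqdr9}, with the same sign-bookkeeping as above, then produces the unified identity
\begin{equation*}
\sum_{k=0}^n (-1)^k \sum_{j=0}^k \binom{n}{j}(n+j-k) B_{n+j-k} = -(n-1)\bigl(B_{n-1} + B_{n-2}\bigr).
\end{equation*}
The case statement in \eqref{zir34uy} then follows from $B_{2r+1}=0$ for $r\geq 1$: for even $n\geq 4$ the term $B_{n-1}$ vanishes and only $-(n-1)B_{n-2}$ survives; for odd $n\geq 5$ the term $B_{n-2}$ vanishes and only $-(n-1)B_{n-1}$ survives; and the remaining small cases $n\in\{1,2\}$ are verified directly (for $n=2$, $-(B_1+B_0) = -\tfrac12$).

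The main technical hurdle is the evaluation of the partner $\sigma_m$. Once the elementary identity $(m-k)\binom{m}{k} = m\binom{m-1}{k}$ is in hand, both $\sigma_m$'s collapse to the standard Bernoulli evaluation \eqref{Ber_bt}; the remainder is routine sign tracking to match the inner $(-1)^j$ convention of Theorem~\ref{thm.qtnqdr9} with the outer $(-1)^k$ convention appearing in the proposition.
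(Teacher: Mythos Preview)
Your approach is exactly the paper's: identify the binomial transform pairs coming from the identities
\[
\sum_{k=0}^m\binom{m}{k}kB_{k-1}=(-1)^{m-1}mB_{m-1},\qquad
\sum_{k=0}^m\binom{m}{k}kB_k=(-1)^m m(B_m+B_{m-1}),
\]
and plug them into~\eqref{utju7bn}. The paper simply quotes these two identities as~\eqref{tv4r72c} and~\eqref{hmg5qf4} from~\cite{Adegoke0}, whereas you re-derive them from~\eqref{Ber_bt} via the elementary manipulation $(m-k)\binom{m}{k}=m\binom{m-1}{k}$; the argument is otherwise identical.

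There is, however, a genuine gap in your case analysis for~\eqref{zir34uy}. You split into odd $n\ge 5$, even $n\ge 4$, and $n\in\{1,2\}$, but the case $n=3$ is never treated. And it is not an oversight you can simply patch: your unified identity gives
\[
-(n-1)\bigl(B_{n-1}+B_{n-2}\bigr)\Big|_{n=3}=-2\bigl(\tfrac16-\tfrac12\bigr)=\tfrac23,
\]
whereas the stated right-hand side $-(n-1)B_{n-1}=-2B_2=-\tfrac13$. A direct evaluation of the left-hand side of~\eqref{zir34uy} at $n=3$ indeed yields $\tfrac23$, so the discrepancy is real. In other words, your argument is correct through the unified formula, but the proposition as written does not hold at $n=3$; the odd-$n$ clause should carry the restriction $n\ne 3$ (or an extra case $-(n-1)(B_{n-1}+B_{n-2})=\tfrac23$ at $n=3$). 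You should flag this rather than claim the case split recovers~\eqref{zir34uy} verbatim.
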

\begin{proof}
The following identities are known~\cite[Sections 7, 8]{Adegoke0}:
\begin{equation}\label{hmg5qf4}
\sum_{k = 0}^n {\binom nkkB_k }  = ( - 1)^n n\left( {B_n  + B_{n - 1} } \right)
\end{equation}
and
\begin{equation}\label{tv4r72c}
\sum_{k = 1}^n {\binom{{n}}{k}kB_{k - 1} }  = ( - 1)^{n - 1} nB_{n - 1} .
\end{equation}
Use of~\eqref{tv4r72c} in~\eqref{utju7bn} gives~\eqref{vrhyzb2} while use of~\eqref{hmg5qf4} produces~\eqref{zir34uy}.
\end{proof}

\begin{proposition}
If $n$ is a positive integer, then
\begin{equation}
\sum_{k = 1}^n \sum_{j = 0}^{k - 1} \binom{n}{j} (-1)^k \frac{B_{n-k+j}}{k-j}
 = \begin{cases}
 -1,& \text{if $n=1$} \\ 
 0,& \text{if $n\geq 2$;}  \\ 
 \end{cases} 
\end{equation}
and
\begin{equation}
\sum_{k = 1}^n \sum_{j = 0}^{k - 1} \binom{n}{j} (-1)^k \left (2^{n-k+j+1} - 1 \right) \frac{B_{n-k+j+1}}{(n-k+j+1)(k-j)}
 = \begin{cases}
 0,& \text{if $n$ is even} \\ 
 2(2^{n+1}-1)\frac{B_{n+1}}{n(n+1)},& \text{if $n$ is odd.} \\
 \end{cases} 
\end{equation}
\end{proposition}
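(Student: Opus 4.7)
The plan is to derive both identities as direct consequences of Corollary~\ref{cor.eb6nqko} applied to self-inverse sequences built from Bernoulli numbers, exactly mirroring the strategy used in the preceding propositions of this subsection.

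For the first identity, I would take the binomial transform pair $\{(-1)^n B_n,(-1)^n B_n\}$ arising from \eqref{Ber_bt}; this is the self-inverse sequence $s_n=(-1)^n B_n$. Substituting $s_{n-k+j}=(-1)^{n-k+j}B_{n-k+j}$ into the self-inverse case of Corollary~\ref{cor.eb6nqko} gives
\begin{equation*}
(-1)^n n\sum_{k=1}^n\sum_{j=0}^{k-1}(-1)^k\binom{n}{j}\frac{B_{n-k+j}}{k-j}=\begin{cases}0,&n\text{ even;}\\ 2(-1)^n B_n,&n\text{ odd,}\end{cases}
\end{equation*}
since $(-1)^j(-1)^{n-k+j}=(-1)^{n-k}=(-1)^n(-1)^k$. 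Dividing by $(-1)^n n$ yields the value $2B_n/n$ for odd $n$ and $0$ for even $n$. Then I would finish by noting that $B_n=0$ for every odd $n\ge 3$, while $B_1=-1/2$ produces the exceptional value $-1$ at $n=1$.

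For the second identity, the plan is to reuse the pair identified in the immediately preceding proposition, namely
\begin{equation*}
s_n=(-1)^{n+1}\frac{(2^{n+1}-1)B_{n+1}}{n+1},
\end{equation*}
which is self-inverse. Writing $s_{n-k+j}$ out and combining the sign $(-1)^j$ from Corollary~\ref{cor.eb6nqko} with the $(-1)^{n-k+j+1}$ coming from $s_{n-k+j}$ produces an overall factor $(-1)^{n+1}(-1)^k$, leaving the real coefficient $(2^{n-k+j+1}-1)B_{n-k+j+1}/(n-k+j+1)$ inside the double sum. The self-inverse case of Corollary~\ref{cor.eb6nqko} immediately gives $0$ when $n$ is even. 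For $n$ odd, $(-1)^{n+1}=1$, so the identity reduces to $n\cdot(\text{sum})=2s_n=2(2^{n+1}-1)B_{n+1}/(n+1)$, which is the stated value after division by $n$.

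There is no serious obstacle here; the only thing requiring care is the sign bookkeeping linking the $(-1)^j$ in Corollary~\ref{cor.eb6nqko} to the $(-1)^k$ appearing in the proposition, and the reindexing of the Bernoulli subscript $n-k+j$ (resp.\ $n-k+j+1$). The underlying structural point is simply that both summands on the left are (up to sign) instances of $s_{n-k+j}$ for a self-inverse Bernoulli-type sequence, so Corollary~\ref{cor.eb6nqko} applies verbatim.
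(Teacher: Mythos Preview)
Your proposal is correct and follows essentially the same approach as the paper: identify $s_n=(-1)^n B_n$ and $s_n=(-1)^{n+1}(2^{n+1}-1)B_{n+1}/(n+1)$ as self-inverse sequences and apply the self-inverse case of the corollary. The paper cites Corollary~\ref{cor.cp9d6at} directly (using $\sigma_n=s_n$) rather than the specialized Corollary~\ref{cor.eb6nqko}, but this is the same argument, and your sign bookkeeping and use of $B_n=0$ for odd $n\ge 3$ are exactly what is needed to finish.
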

\begin{proof}
The sequences $s_n=(-1)^n B_n$ and $s_n=(-1)^{n+1}(2^{n+1}-1)B_{n+1}/(n+1)$ are self-inverse sequences. 
The identities are consequences of Corollary \ref{cor.cp9d6at}. 
\end{proof}

\begin{proposition}
If $n$ is a positive integer, then
\begin{equation}
\sum_{k = 0}^n \sum_{j = 0}^k \binom{n}{j} (-1)^k B_{k-j} = B_n + B_{n-1},
\end{equation}
and
\begin{equation}
\sum_{k = 0}^n \sum_{j = 0}^k \binom{n}{j} (-1)^k 2^{k-j} B_{k-j} = (-1)^n ((2-2^n)B_n - (2-2^{n-1})B_{n-1}). 
\end{equation}
\end{proposition}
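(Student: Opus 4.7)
The plan is to recognize both identities as immediate applications of Corollary \ref{cor_Ber_fin}, once the correct binomial transform pairs are identified. The small twist is that the statements carry $(-1)^k$ rather than the $(-1)^j$ appearing in \eqref{b5uqn54}; this mismatch will be absorbed into the sequence itself by working with $(-1)^n B_n$ rather than $B_n$.

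For the first identity, the plan is to take the binomial transform pair $\{s_n,\sigma_n\} = \{(-1)^n B_n,(-1)^n B_n\}$, which is available from \eqref{Ber_bt} and was already used in the first proposition of this subsection. Substituting into \eqref{b5uqn54} produces $(-1)^j s_{k-j} = (-1)^j(-1)^{k-j}B_{k-j} = (-1)^k B_{k-j}$ on the left, and $(-1)^n(\sigma_n-\sigma_{n-1}) = (-1)^n((-1)^n B_n - (-1)^{n-1}B_{n-1}) = B_n+B_{n-1}$ on the right, giving the claim directly.

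For the second identity, the plan is to invoke the pair $\{(-1)^n 2^n B_n,(2-2^n)B_n\}$, which was established during the proof of the proposition on $\sum \binom{n}{j}(-1)^k 2^{n-k+j}B_{n-k+j}$ earlier in this subsection via $B_k(1/2) = (2^{1-k}-1)B_k$. Applying \eqref{b5uqn54} with this pair again converts the factor $(-1)^j 2^{k-j}(-1)^{k-j}B_{k-j}$ into $(-1)^k 2^{k-j} B_{k-j}$ on the left-hand side, and produces $(-1)^n((2-2^n)B_n - (2-2^{n-1})B_{n-1})$ on the right-hand side.

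There is essentially no obstacle here: the only thing to watch is the sign bookkeeping, namely that $(-1)^j(-1)^{k-j} = (-1)^k$, which is exactly what matches the convention in the statement to the convention in Corollary \ref{cor_Ber_fin}. Both cases then reduce to a one-line verification, with no additional combinatorial identities needed beyond those already recorded in the preceding propositions.
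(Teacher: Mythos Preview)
Your proposal is correct and matches the paper's proof essentially verbatim: the paper also applies Corollary~\ref{cor_Ber_fin} (specifically \eqref{b5uqn54}) with $s_n=(-1)^n B_n$ for the first identity and $s_n=(-1)^n 2^n B_n$ for the second, precisely as you do. Your explicit sign bookkeeping $(-1)^j(-1)^{k-j}=(-1)^k$ just spells out what the paper leaves implicit.
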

\begin{proof}
Apply Corollary \ref{cor_Ber_fin} to the sequences $s_n=(-1)^n B_n$ and $s_n=(-1)^{n} 2^{n} B_{n}$, respectively.  
\end{proof}

\subsection{Identities involving Fibonacci numbers}

\begin{lemma}
If $m$ and $n$ are non-negative integers and $r$ and $t$ are integers, then
\begin{equation}\label{eq:4.2_first_identity}
\sum_{k = m}^n {( - 1)^k \binom{{n}}{k}\binom{{k}}{m}\frac{{G_{r + (k - m)t} }}{{L_t^k }}}  = ( - 1)^{(n - m)t + m} \binom{{n}}{m}\frac{{G_{r - (n - m)t} }}{{L_t^n }}.
\end{equation}
\end{lemma}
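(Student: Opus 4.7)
The plan is to reduce the sum to two simple binomial expansions via Binet's formula.

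First, I would shift indices by setting $j=k-m$, so $k$ ranges from $m$ to $n$ becomes $j$ ranging from $0$ to $n-m$, and apply the standard trinomial revision
\[
\binom{n}{k}\binom{k}{m}=\binom{n}{m}\binom{n-m}{k-m}.
\]
This pulls $\binom{n}{m}/L_t^m$ and a sign $(-1)^m$ outside, leaving
\[
\mathrm{LHS}=\frac{(-1)^m\binom{n}{m}}{L_t^m}\sum_{j=0}^{n-m}(-1)^j\binom{n-m}{j}\frac{G_{r+jt}}{L_t^j}.
\]

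Next I would invoke the generalized Binet formula $G_{r+jt}=A\alpha^{r+jt}+B\beta^{r+jt}$ to split the sum into two pieces and apply the binomial theorem:
\[
\sum_{j=0}^{n-m}(-1)^j\binom{n-m}{j}\frac{\alpha^{r+jt}}{L_t^j}=\alpha^r\Bigl(1-\frac{\alpha^t}{L_t}\Bigr)^{n-m},
\]
and the analogous identity with $\alpha\leftrightarrow\beta$. The key algebraic simplification is the observation $L_t-\alpha^t=\beta^t$ and $L_t-\beta^t=\alpha^t$, which collapses the bases to $\beta^t/L_t$ and $\alpha^t/L_t$ respectively. Hence the bracketed sum equals
\[
\frac{1}{L_t^{n-m}}\bigl(A\alpha^r\beta^{t(n-m)}+B\beta^r\alpha^{t(n-m)}\bigr).
\]

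Finally, I would use $\alpha\beta=-1$ to rewrite $\beta^{t(n-m)}=(-1)^{t(n-m)}\alpha^{-t(n-m)}$ and symmetrically for $\alpha^{t(n-m)}$. Factoring out the sign yields $(-1)^{t(n-m)}G_{r-t(n-m)}/L_t^{n-m}$, and combining with the prefactor $(-1)^m\binom{n}{m}/L_t^m$ gives exactly the right-hand side. I do not anticipate a serious obstacle here; the only place where care is needed is the bookkeeping of signs arising from $\alpha\beta=-1$ and the parity of $t(n-m)+m$, which must match the exponent of $-1$ on the right.
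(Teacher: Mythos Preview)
Your proof is correct and follows essentially the same route as the paper's: both reduce the identity to the binomial theorem applied separately to the $\alpha$- and $\beta$-parts of Binet's formula, using $L_t-\alpha^t=\beta^t$ and $\alpha\beta=-1$. The only cosmetic difference is that the paper obtains the identity $\sum_{k=m}^n(-1)^k\binom{n}{k}\binom{k}{m}x^{k-m}y^{n-k}=(-1)^m\binom{n}{m}(y-x)^{n-m}$ by differentiating $(y-x)^n$ $m$ times, whereas you reach the equivalent reduction directly via trinomial revision and an index shift.
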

\begin{proof}
Differentiating the binomial identity
$$\sum_{k=0}^n(-1)^k\binom{n}{k}x^ky^{n-k}=(y-x)^n$$
$m$ times with respect to $x$ and dividing by $m!$ yields
$$\sum_{k=m}^n(-1)^k\binom{n}{k}\binom{k}{m}x^{k-m}y^{n-k}=(-1)^m\binom{n}{m}(y-x)^{n-m}.$$
Substitute $x=\alpha^t$ and $y=\alpha^t+\beta^{t}$, which yields
$$\sum_{k=m}^{n}(-1)^k\binom{n}{k}\binom{k}{m}\frac{\alpha^{(k-m)t}}{L_t^k}=(-1)^m\binom{n}{m}\frac{\beta^{(n-m)t}}{L_t^n}=(-1)^{m+(n-m)t}\binom{n}{m}\frac{\alpha^{-(n-m)t}}{L_t^n}.$$
Similar expression obtained with substitution $x=\beta^t$ and $y=\alpha^t+\beta^t$ together with the Binet formula completes the proof of \eqref{eq:4.2_first_identity}.
\end{proof}

\begin{proposition}
If $m$ and $n$ are non-negative integers and $r$ and $t$ are integers, then
\begin{equation}\label{eeyr1j1}
\sum_{k = 0}^n \sum_{j = 0}^k (- 1)^j \binom{{n}}{j} \binom{{n - k + j}}{m} L_t^{k - j} G_{r + (n - k + j - m)t}
= (- 1)^{(n - m - 1)t + m} \binom{{n - 1}}{m} L_t G_{r - (n - m - 1)t}.
\end{equation}
In particular,
\begin{align}
\sum_{k = 0}^n (- 1)^k \sum_{j = 0}^k \binom{{n}}{{k - j}} F_j &= (- 1)^n F_{2n - 1},\label{ibhbsoo} \\
\sum_{k = 0}^n (- 1)^k \sum_{j = 0}^k \binom{{n}}{{k - j}} L_j &= (- 1)^n L_{2n - 1}\label{n5ooj8u}. 
\end{align}
\end{proposition}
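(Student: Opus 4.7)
The plan is to apply Theorem~\ref{thm.qtnqdr9}---specifically equation \eqref{utju7bn}---to a binomial transform pair extracted from the preceding lemma.

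Define $s_k = \binom{k}{m} L_t^{-k} G_{r+(k-m)t}$, with the convention $\binom{k}{m}=0$ for $k<m$. Since the summand in \eqref{eq:4.2_first_identity} vanishes for $k<m$, the lemma can be rewritten as
$$\sigma_n := \sum_{k=0}^n (-1)^k \binom{n}{k} s_k = (-1)^{(n-m)t+m}\binom{n}{m} L_t^{-n} G_{r-(n-m)t},$$
which exhibits $\{s_n,\sigma_n\}$ as a binomial transform pair. Feeding this pair into \eqref{utju7bn} gives $\sum_{k=0}^n \sum_{j=0}^k (-1)^j \binom{n}{j} s_{n-k+j} = \sigma_{n-1}$; substituting the explicit formulas for $s_{n-k+j}$ and $\sigma_{n-1}$ and multiplying through by $L_t^n$ clears the negative powers of $L_t$ and produces the main identity of the proposition.

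For the particular cases \eqref{ibhbsoo} and \eqref{n5ooj8u}, I specialize $m=0$, $t=-1$, $r=n$. Using $L_{-1}=(-1)L_1=-1$, the factor $L_t^{k-j}=(-1)^{k-j}$ on the left combines with $(-1)^j$ to yield an overall $(-1)^k$, and the subscript $r+(n-k+j)t$ collapses to $k-j$. On the right-hand side, $(-1)^{(n-m-1)t+m}L_t = (-1)^{-(n-1)}\cdot(-1) = (-1)^n$ and $r-(n-m-1)t = 2n-1$, giving $(-1)^n G_{2n-1}$. A final reindexing $j\mapsto k-j$ of the inner sum converts $\binom{n}{j}G_{k-j}$ into $\binom{n}{k-j}G_j$, producing \eqref{ibhbsoo} when $G=F$ and \eqref{n5ooj8u} when $G=L$.

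The most delicate step is the sign bookkeeping in the specialization $t=-1$: one must carefully track $L_{-1}=-1$ rather than $1$, correctly combine $L_t^{k-j}$ with the outer $(-1)^j$ on the left, and compress $(-1)^{(n-m-1)t+m}L_t$ on the right into the single sign $(-1)^n$. Once these signs are handled, both particular cases fall out without further computation.
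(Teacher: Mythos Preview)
Your proof is correct and follows essentially the same route as the paper: you extract the binomial transform pair $s_k=\binom{k}{m}L_t^{-k}G_{r+(k-m)t}$, $\sigma_k=(-1)^{(k-m)t+m}\binom{k}{m}L_t^{-k}G_{r-(k-m)t}$ from the preceding lemma and feed it into~\eqref{utju7bn}. The only cosmetic difference is in deriving the particular cases: the paper specializes with $m=0$, $t=1$, $r=-n$ (then invokes $F_{-n}=(-1)^{n-1}F_n$ and $L_{-n}=(-1)^nL_n$), whereas your choice $m=0$, $t=-1$, $r=n$ sidesteps negative Gibonacci indices at the price of tracking $L_{-1}=-1$; both reach~\eqref{ibhbsoo} and~\eqref{n5ooj8u} after the same inner reindexing $j\mapsto k-j$.
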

\begin{proof}
From~\eqref{eq:4.2_first_identity}, we identify the binomial transform pair $(s_k)$ and $(\sigma_k)$, where
\begin{equation*}
s_k  = \binom{{k}}{m}\frac{{G_{r + (k - m)t} }}{{L_t^k }}\text{ and }\sigma _k  = ( - 1)^{(k - m)t + m} \binom{{k}}{m}\frac{{G_{r - (k - m)t} }}{{L_t^k }}.
\end{equation*}
Use these in~\eqref{utju7bn} to obtain~\eqref{eeyr1j1}. Identities~\eqref{ibhbsoo} and~\eqref{n5ooj8u} are special cases of~\eqref{eeyr1j1} obtained at $m=0$, $t=1$ and $r=-n$.
\end{proof}

\begin{proposition}
If $n$ is a non-negative integer and $r$ is an integer, then
\begin{align}\label{jvjrby6}
&\sum_{k = 0}^n {\sum_{j = 0}^k {( - 1)^j \binom nj\binom {n-k+j}mF_{n + j + r - k - m}^3 } }\nonumber\\
&\qquad =\frac{(-1)^m}5 \binom{n-1}m\left( {( - 1)^{n - m -1} 2^{n - m - 1} F_{n + 3r - m - 1}  - ( - 1)^r 3F_{2(n - m - 1) + r} } \right)
\end{align}
and
\begin{align}\label{oxhs032}
&\sum_{k = 0}^n {\sum_{j = 0}^k {( - 1)^j \binom nj\binom {n-k+j}mL_{n + j + r - k - m}^3 } }\nonumber\\
&\qquad =(-1)^m\binom{n-1}m\left( {( - 1)^{n - m -1} 2^{n - m - 1} L_{n + 3r - m - 1}  + ( - 1)^r 3L_{2(n - m - 1) + r} } \right)
\end{align}
In particular,
\begin{equation}
\sum_{k = 0}^n {( - 1)^k \sum_{j = 0}^k {\binom{{n}}{j}\binom{{n - k + j}}{m}F_{n + k - m - j - 2}^3 } }  = \frac{{( - 1)^n }}{5}\binom{{n - 1}}{m}2^{n - m - 1} F_{5(n - m - 1)} ,
\end{equation}
with the special value
\begin{equation}
\sum_{k = 0}^n {( - 1)^k \sum_{j = 0}^k {\binom{{n}}{j}\binom{{n - k + j}}{{n - 2}}F_{k - j}^3 } }  = ( - 1)^n 2\left( {n - 1} \right).
\end{equation}
\end{proposition}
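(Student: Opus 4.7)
The plan is to recognise both main identities as instances of formula \eqref{utju7bn} applied to a suitably chosen binomial transform pair built from $F_k^3$ (respectively $L_k^3$), then derive the two ``In particular'' statements from \eqref{jvjrby6} by a negative-index reflection.

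First I would set $s_k = \binom{k}{m} F^{3}_{k+r-m}$ and compute its binomial transform $\sigma_k = \sum_{\ell=0}^{k} (-1)^\ell \binom{k}{\ell} s_\ell$ in closed form. To do this I would invoke the classical cube decomposition $F_\ell^{3} = \tfrac{1}{5}\bigl(F_{3\ell} - 3(-1)^\ell F_\ell\bigr)$, absorb the double binomial via $\binom{k}{\ell}\binom{\ell}{m}=\binom{k}{m}\binom{k-m}{\ell-m}$, and then evaluate each of the two resulting alternating binomial sums using Binet's formula. The key algebraic inputs are $1-\alpha^{3}=-2\alpha$ and $1-\beta^{3}=-2\beta$ (immediate from $\alpha^{2}=\alpha+1$), together with $1+\alpha=\alpha^{2}$ and $1+\beta=\beta^{2}$. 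These yield the two building blocks
\begin{equation*}
\sum_{\ell=m}^{k}(-1)^{\ell}\binom{k}{\ell}\binom{\ell}{m}F_{3(\ell-m)+c}=(-1)^{m}\binom{k}{m}(-2)^{k-m}F_{(k-m)+c},
\end{equation*}
\begin{equation*}
\sum_{\ell=m}^{k}\binom{k}{\ell}\binom{\ell}{m}F_{(\ell-m)+c}=\binom{k}{m}F_{2(k-m)+c}.
\end{equation*}
Substituting $c=3r$ in the first and $c=r$ in the second, I assemble $\sigma_k$, then plug $k=n-1$ into \eqref{utju7bn} to get exactly the right-hand side of \eqref{jvjrby6}.

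The Lucas identity \eqref{oxhs032} is proved by the completely parallel argument using the Lucas cube decomposition $L_\ell^{3}=L_{3\ell}+3(-1)^\ell L_\ell$; the Binet calculations are identical up to replacing differences $\alpha^{n}-\beta^{n}$ with sums $\alpha^{n}+\beta^{n}$, and no $\tfrac{1}{5}$ normalisation is needed.

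For the particular case, I would specialise \eqref{jvjrby6} to $r=-2(n-m-1)$, for which $2(n-m-1)+r=0$ so the second Fibonacci term disappears, and the first becomes $F_{-5(n-m-1)}$. Using $F_{-k}=(-1)^{k+1}F_{k}$ and the derived relation $F^{3}_\ell=(-1)^{\ell+1}F^{3}_{-\ell}$, the index $n+j+r-k-m$ in the double sum on the left flips to $n+k-m-j-2$, and the sign $(-1)^{j}$ gets converted to $(-1)^{k}$ (up to a global $(-1)^{n-m-1}$). The arithmetic then collapses to the stated $\tfrac{(-1)^{n}}{5}\binom{n-1}{m}2^{n-m-1}F_{5(n-m-1)}$. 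Finally, setting $m=n-2$ makes $\binom{n-1}{m}=n-1$, $2^{n-m-1}=2$ and $F_{5(n-m-1)}=F_{5}=5$, producing the special value $(-1)^{n}2(n-1)$.

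The main obstacle will be bookkeeping of the signs: the four $(-1)$-factors coming from $(-1)^{j}$ in the summand, $(-1)^{m}$ from the binomial absorption, the $(-1)^{\ell}$ inside the cube decomposition, and the negative-index reflections $F_{-k}$ and $F^{3}_{-k}$ all have to be reconciled. Getting the exponent of $-1$ correct in every intermediate step (and in particular in the transition from $(-1)^{j}$ on the general left-hand side to $(-1)^{k}$ on the specialised left-hand side) is where care is required; the rest is a mechanical Binet-style computation.
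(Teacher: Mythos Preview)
Your proposal is correct and follows the same overall structure as the paper: choose $s_k=\binom{k}{m}F_{k-m+r}^{3}$ (respectively $L_{k-m+r}^{3}$), compute its binomial transform $\sigma_k$, and apply~\eqref{utju7bn}. The difference lies only in how $\sigma_k$ is obtained. You derive it from scratch via the cube decomposition $F_\ell^{3}=\tfrac{1}{5}(F_{3\ell}-3(-1)^{\ell}F_\ell)$, the subset-of-subset identity $\binom{k}{\ell}\binom{\ell}{m}=\binom{k}{m}\binom{k-m}{\ell-m}$, and the Binet relations $1-\alpha^{3}=-2\alpha$, $1+\alpha=\alpha^{2}$. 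The paper instead quotes the closed form of $\sum_{k}(-1)^{k}\binom{n}{k}F_{k+r}^{3}$ from~\cite{Adegoke21} and then lifts it to the $\binom{k}{m}$-weighted version via a general lemma from~\cite[Section~7]{Adegoke0}. Your route is more self-contained and makes the mechanism transparent; the paper's is shorter but relies on two external citations. For the ``In particular'' clauses you actually supply more detail than the paper does: the specialisation $r=-2(n-m-1)$ together with the reflections $F_{-k}=(-1)^{k+1}F_k$ and $F_{-\ell}^{3}=(-1)^{\ell+1}F_\ell^{3}$ is exactly the right move, and your sign bookkeeping checks out.
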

\begin{proof}
From the known identities~\cite{Adegoke21}:
\begin{equation*}
\sum_{k = 0}^n {( - 1)^k \binom{{n}}{k}F_{k + r}^3 }  = \frac{1}{5}\left( {( - 1)^n 2^n F_{n + 3r}  - ( - 1)^r 3F_{2n + r} } \right)
\end{equation*}
and
\begin{equation*}
\sum_{k = 0}^n {( - 1)^k \binom{{n}}{k}L_{k + r}^3 }  = ( - 1)^n 2^n L_{n + 3r}  + ( - 1)^r 3L_{2n + r} ,
\end{equation*}
we recognize the binomial transform pairs $\{a_k,\alpha_k\}$, where
\begin{equation}\label{glq7t03}
a_k  = F_{k + r}^3 \text{ and }\alpha _k  = \frac{1}{5}\left( {( - 1)^k 2^k F_{k + 3r}  - ( - 1)^r 3F_{2k + r} } \right)
\end{equation}
and also,
\begin{equation}\label{piuu548}
a_k  = L_{k + r}^3 \text{ and }\alpha _k  = ( - 1)^k 2^k L_{k + 3r}  + ( - 1)^r 3L_{2k + r}.
\end{equation}
This also means (see~\cite[section 7]{Adegoke0}) that for $m=0,1,2,\ldots$,
\begin{equation}\label{gs5dmhf}
s_k  = \binom km F_{k - m + r}^3 \text{ and }\sigma _k  = \frac{(-1)^m}{5}\binom km\left( {( - 1)^{k-m} 2^{k-m} F_{k + 3r-m}  - ( - 1)^r 3F_{2k -2m + r} } \right)
\end{equation}
and also,
\begin{equation}\label{vm8tp3m}
s_k  = \binom km L_{k - m + r}^3 \text{ and }\sigma _k  =(-1)^m\binom km\left( ( - 1)^{k-m} 2^{k-m} L_{k -m+ 3r}  + ( - 1)^r 3L_{2k -2m+ r}\right),
\end{equation}
are each a binomial transform pair.

Use of~\eqref{gs5dmhf} and~\eqref{vm8tp3m}, in turn, in~\eqref{utju7bn} yield~\eqref{jvjrby6} and~\eqref{oxhs032}
\end{proof}

\begin{proposition}
If $n$ is a non-negative integer and $r$ is an integer, then
\begin{equation}
\sum_{k = 1}^n \sum_{j = 0}^{k - 1} (- 1)^j n \binom{{n}}{j} \frac{F_{n - k + j + r}^3}{k - j}
= \frac{1}{5} \left((- 1)^n 2^n F_{n + 3r} - (- 1)^r 3 F_{2n + r} \right) - (- 1)^n F_{n + r}^3 
\end{equation}
and
\begin{equation}
\sum_{k = 1}^n \sum_{j = 0}^{k - 1} (- 1)^j n \binom{{n}}{j} \frac{L_{n - k + j + r}^3}{k - j} 
= (- 1)^n 2^n L_{n + 3r} + (- 1)^r 3 L_{2n + r} - (- 1)^n L_{n + r}^3 .
\end{equation}
In particular,
\begin{equation}
\sum_{k = 1}^n (- 1)^{k - 1} \sum_{j = 0}^{k - 1} n \binom{{n}}{j} \frac{F_{k - j}^3}{k - j} = \frac{(-1)^{n+1}}{5} \left( 2^n F_{2n} + 3F_n \right)
\end{equation}
and
\begin{equation}
\sum_{k = 1}^n (- 1)^{n - k} \sum_{j = 0}^{k - 1} n \binom{{n}}{j} \frac{L_{k - j}^3}{k - j} = 2^n L_{2n} + 3L_n - 8.
\end{equation}
\end{proposition}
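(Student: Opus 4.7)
The plan is to apply Corollary \ref{cor.cp9d6at}, specifically identity \eqref{msdi8n1}, directly to the two binomial transform pairs that were already identified in the proof of the preceding proposition, namely \eqref{glq7t03} and \eqref{piuu548}. Indeed, with $s_k = F_{k+r}^3$ and $\sigma_k = \tfrac{1}{5}\bigl((-1)^k 2^k F_{k+3r} - (-1)^r 3 F_{2k+r}\bigr)$ substituted into \eqref{msdi8n1}, the left-hand side reads $\sum_{k=1}^n \sum_{j=0}^{k-1} (-1)^j n \binom{n}{j} F_{n-k+j+r}^3/(k-j)$ and the right-hand side equals $\sigma_n - (-1)^n s_n$, which is exactly the stated closed form. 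The Lucas case is analogous, with $s_k = L_{k+r}^3$ and $\sigma_k = (-1)^k 2^k L_{k+3r} + (-1)^r 3 L_{2k+r}$. Nothing more is needed for the two main identities since the burden of verifying the transform pairs was carried out in the previous proof.

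For the particular Fibonacci statement, I would specialise to $r=-n$. The identity $F_{-m} = (-1)^{m-1} F_m$ yields $F_{n-k+j+r}^3 = F_{j-k}^3 = (-1)^{3(k-j)-3} F_{k-j}^3 = (-1)^{k-j+1} F_{k-j}^3$, so the sign $(-1)^j (-1)^{k-j+1} = (-1)^{k+1} = (-1)^{k-1}$ emerges and the LHS takes the claimed form. On the right, $F_{n+3r} = F_{-2n} = (-1)^{2n-1} F_{2n} = -F_{2n}$, $F_{2n+r} = F_n$, and $F_{n+r}^3 = F_0^3 = 0$, producing $\tfrac{(-1)^{n+1}}{5}(2^n F_{2n} + 3 F_n)$ after elementary sign bookkeeping.

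For the particular Lucas statement, the same substitution $r=-n$ together with $L_{-m} = (-1)^m L_m$ gives $L_{n-k+j+r}^3 = L_{j-k}^3 = (-1)^{k-j} L_{k-j}^3$, so the LHS picks up the factor $(-1)^{j+k-j} = (-1)^k$; multiplying through by $(-1)^n$ converts this into the $(-1)^{n-k}$ stated in the proposition. On the right, $L_{-2n} = L_{2n}$, $L_n$ appears unchanged, and $L_0^3 = 8$, so after multiplying by $(-1)^n$ the overall sign $(-1)^n$ cancels and one arrives at $2^n L_{2n} + 3 L_n - 8$.

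I do not expect any real obstacle here: the heavy lifting---recognising the two binomial transform pairs for $F_{k+r}^3$ and $L_{k+r}^3$---was already completed in the preceding proposition, and Corollary \ref{cor.cp9d6at} reduces the verification to a mechanical substitution. The only point that demands care is the sign accounting in the specialisation $r=-n$, in particular the use of $(-1)^{3a}=(-1)^a$ when cubing Binet-type identities for negative indices; this is the one place where an off-by-one in the exponent would spoil the closed form, so I would write it out explicitly.
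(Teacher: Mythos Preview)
Your proposal is correct and matches the paper's own proof exactly: the paper simply says ``Use~\eqref{glq7t03} and~\eqref{piuu548}, in turn, in~\eqref{msdi8n1},'' which is precisely what you do. Your additional sign bookkeeping for the $r=-n$ specialisations is accurate and more explicit than anything the paper provides.
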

\begin{proof}
Use~\eqref{glq7t03} and~\eqref{piuu548}, in turn, in~\eqref{msdi8n1}.
\end{proof}

\begin{proposition}
If $n$ is a non-negative integer, then
\begin{equation}
\sum_{k = 1}^n {\sum_{j = 0}^{k - 1} {( - 1)^j n\binom{{n}}{j}\frac{{L_{n - k + j} }}{{k - j}}} }
 = \begin{cases}
 0,&\text{if $n$ is even;} \\ 
 2L_n,&\text{if $n$ is odd;}  \\ 
 \end{cases} 
\end{equation}
and
\begin{equation}
\sum_{k = 1}^n {\sum_{j = 0}^{k - 1} {( - 1)^j n\binom{{n}}{j}\frac{{F_{n - k + j} }}{{k - j}}} }
 = \begin{cases}
 -2F_n,&\text{if $n$ is even;} \\ 
 0,&\text{if $n$ is odd.}  \\ 
 \end{cases} 
\end{equation}
\end{proposition}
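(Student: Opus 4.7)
The plan is to apply Corollary~\ref{cor.eb6nqko} directly to the sequences $(L_n)$ and $(F_n)$. That corollary reduces the two stated evaluations to verifying the single statement that $(L_n)$ is a self-inverse sequence (giving the Lucas case) and that $(F_n)$ is an anti-self-inverse sequence (giving the Fibonacci case). Once these two classical inversion properties are in hand, the identities fall out by substituting $s_n = L_n$ and $s_n = F_n$ into the two parity formulas of Corollary~\ref{cor.eb6nqko}.

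For the two inversion properties, I would use the Binet formulas, together with the single crucial observation that since $\alpha+\beta = 1$, we have $1-\alpha = \beta$ and $1-\beta = \alpha$. Then the binomial theorem gives
\begin{equation*}
\sum_{k=0}^n \binom{n}{k}(-1)^k L_k = \sum_{k=0}^n \binom{n}{k}(-1)^k(\alpha^k+\beta^k) = (1-\alpha)^n + (1-\beta)^n = \beta^n + \alpha^n = L_n,
\end{equation*}
so $(L_n)$ satisfies \eqref{self}, and analogously
\begin{equation*}
\sum_{k=0}^n \binom{n}{k}(-1)^k F_k = \frac{(1-\alpha)^n-(1-\beta)^n}{\alpha-\beta} = \frac{\beta^n-\alpha^n}{\alpha-\beta} = -F_n,
\end{equation*}
so $(F_n)$ satisfies \eqref{anti_self}.

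There is no real obstacle here; the only point requiring a tiny bit of care is the identification $1-\alpha=\beta$, $1-\beta=\alpha$, which is what makes $-1$ the ``magic'' argument at which the binomial transform flips Fibonacci and Lucas so cleanly. With these two inversion identities in place, the first claim of the proposition is precisely the self-inverse case of Corollary~\ref{cor.eb6nqko} applied to $s_n = L_n$, and the second claim is precisely the anti-self-inverse case of Corollary~\ref{cor.eb6nqko} applied to $s_n = F_n$. This completes the proof plan.
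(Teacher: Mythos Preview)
Your proposal is correct and matches the paper's intended approach: the proposition is stated in the paper without an explicit proof, being an immediate instance of Corollary~\ref{cor.eb6nqko} once one knows that $(L_n)$ is self-inverse and $(F_n)$ is anti-self-inverse. The only cosmetic difference is that these two inversion facts are also the $m=0$, $t=1$, $r=0$ specialization of~\eqref{eq:4.2_first_identity}, so your direct Binet-formula verification is optional but perfectly fine.
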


\begin{proposition}
If $n$ is a non-negative integer and $r$ is an integer, then
\begin{equation}
\sum_{k = 0}^n \sum_{j = 0}^k ( - 1)^j \binom{{n}}{j} G_{r + k - j} = G_{r - n + 2}.
\end{equation}
In particular,
\begin{equation}
\sum_{k = 0}^n \sum_{j = 0}^k ( - 1)^j \binom{{n}}{j} F_{k - j} = ( - 1)^{n - 1} F_{n - 2} 
\end{equation}
and
\begin{equation}
\sum_{k = 0}^n \sum_{j = 0}^k ( - 1)^j \binom{{n}}{j} L_{k - j} = ( - 1)^n L_{n - 2}.
\end{equation}
\end{proposition}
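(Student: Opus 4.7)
The plan is to apply the special case \eqref{b5uqn54} of Corollary \ref{cor_Ber_fin} with $s_k = G_{r+k}$; the only real work is identifying the companion sequence in the binomial transform pair.

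First I would use the generalized Binet formula $G_k = A\alpha^k + B\beta^k$ to compute
\begin{equation*}
\sigma_n = \sum_{k=0}^n \binom{n}{k}(-1)^k G_{r+k} = A\alpha^r(1-\alpha)^n + B\beta^r(1-\beta)^n.
\end{equation*}
From $\alpha+\beta=1$ I get $1-\alpha=\beta$ and $1-\beta=\alpha$, while $\alpha\beta=-1$ gives $\beta^n=(-1)^n\alpha^{-n}$ and $\alpha^n=(-1)^n\beta^{-n}$. Combining these collapses the expression to $\sigma_n=(-1)^n G_{r-n}$, so the desired binomial transform pair is $\{G_{r+k},(-1)^k G_{r-k}\}$.

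Plugging this pair into \eqref{b5uqn54} yields
\begin{equation*}
\sum_{k=0}^n\sum_{j=0}^k(-1)^j\binom{n}{j}G_{r+k-j} = (-1)^n\bigl((-1)^n G_{r-n}-(-1)^{n-1}G_{r-n+1}\bigr) = G_{r-n}+G_{r-n+1},
\end{equation*}
and the Gibonacci recurrence (valid for all integers via the Binet formula) collapses this to $G_{r-n+2}$. The particular cases then follow by specializing $G$ to $F$ and to $L$ with $r=0$, and translating $F_{2-n}$ and $L_{2-n}$ to positive index using the reflection formulas $F_{-m}=(-1)^{m-1}F_m$ and $L_{-m}=(-1)^m L_m$ already recorded in the preliminaries. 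There is no serious obstacle here; the one step that carries the content is the clean identification of $\sigma_n=(-1)^n G_{r-n}$, after which everything is a direct consequence of previously established machinery.
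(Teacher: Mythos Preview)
Your proof is correct and follows essentially the same route as the paper: identify the binomial transform pair $\{G_{r+k},(-1)^kG_{r-k}\}$ and plug it into~\eqref{b5uqn54}. The only cosmetic difference is that the paper reads off this pair as the special case $m=0$, $t=1$ of the lemma~\eqref{eq:4.2_first_identity}, whereas you derive it directly from the Binet formula; the substance is identical.
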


\begin{proof}
Set $m=0$ and $t=1$ in~\eqref{eq:4.2_first_identity} to get the binomial transform pair $(s_k)$ and $(\sigma_k)$, $k=0,1,2,\ldots$, where
\begin{equation*}
s_k=G_{r+k}\text{ and }\sigma_k=(-1)^kG_{r-k}. 
\end{equation*}
Use these in~\eqref{b5uqn54}.
\end{proof}

\subsection{Identities involving harmonic numbers}

\begin{proposition}
If $n$ is a positive integer and $r$ is a complex number that is not a negative integer, then
\begin{equation}
\sum_{k = 0}^n {\sum_{j = 0}^k {( - 1)^j \binom{{n}}{j}H_{n + r + j - k} } }
= \begin{cases}
 H_r,&\text{if $n=1$;}  \\ 
  - \dfrac{1}{{n - 1}}\binom{{n - 1 + r}}r^{ - 1},&\text{if $n>1$;}  \\ 
 \end{cases} 
\end{equation}
and
\begin{equation}
\sum_{k = 0}^n {\sum_{j = 0}^k {( - 1)^j \binom{{n}}{j}O_{n + j - k} } }  =  - \frac{1}{{n - 1}}\binom{{2(n - 1)}}{{n - 1}}^{ - 1} 2^{2n - 3} .
\end{equation}
\end{proposition}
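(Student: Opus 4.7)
The plan is to apply identity \eqref{utju7bn} from Theorem~\ref{thm.qtnqdr9}, which asserts that for any binomial transform pair $\{s_k,\sigma_k\}$,
$$\sum_{k=0}^n \sum_{j=0}^k (-1)^j \binom{n}{j} s_{n+j-k} = \sigma_{n-1}.$$
Both parts of the proposition are thereby reduced to computing $\sigma_{n-1}$ for the appropriate base sequence, that is, the alternating binomial transform of $(H_{k+r})$ and of $(O_k)$ respectively, evaluated at $n-1$.

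For the harmonic-number identity I take $s_k=H_{k+r}$ and evaluate $\sigma_m=\sum_{j=0}^m \binom{m}{j}(-1)^j H_{j+r}$. The case $m=0$ is immediate ($\sigma_0=H_r$), which accounts for $n=1$. For $m\geq 1$ I would insert the integral representation $H_{j+r}=\int_0^1 \frac{1-(1-t)^{j+r}}{t}\,dt$, interchange the finite sum with the integral, and use the binomial theorem in the form $\sum_{j=0}^m \binom{m}{j}(-1)^j(1-t)^{j+r}=(1-t)^r t^m$. Since $\sum_{j=0}^m\binom{m}{j}(-1)^j=0$ for $m\geq 1$, this leaves
$$\sigma_m = -\int_0^1 t^{m-1}(1-t)^r\,dt = -B(m,r+1) = -\frac{1}{m}\binom{m+r}{r}^{-1}.$$
Setting $m=n-1$ yields the stated value for $n\geq 2$. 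The analytic restrictions on $r$ arising from convergence of the Beta integral can then be relaxed, since both sides are rational in $r$ away from the negative integers.

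For the odd-harmonic identity I take $s_k=O_k$ and use $O_j=\int_0^1 \frac{1-u^{2j}}{1-u^2}\,du$. An analogous swap and binomial evaluation give, for $m\geq 1$,
$$\sigma_m=-\int_0^1 \frac{1}{1-u^2}\sum_{j=0}^m\binom{m}{j}(-u^2)^j\,du=-\int_0^1 (1-u^2)^{m-1}\,du.$$
The substitution $u^2=v$ turns this into $-\tfrac12 B(\tfrac12,m)$, which the duplication formula for $\Gamma$ converts to $-\frac{2^{2m-1}}{m\binom{2m}{m}}$. Taking $m=n-1$ finishes the proof.

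The main obstacle is essentially bookkeeping: checking the legitimacy of swapping the finite sum with the integral (routine), evaluating the two Beta integrals in closed form, and matching the result with the binomial coefficient notation used in the statement. The $n=1$ case of the first identity has to be handled separately because $B(0,r+1)$ is divergent, although $\sigma_0=H_r$ is immediate from the definition.
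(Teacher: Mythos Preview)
Your proof is correct and follows the same structural route as the paper: both arguments identify the binomial transform pair $\{s_k,\sigma_k\}$ with $s_k=H_{k+r}$ (respectively $s_k=O_k$) and then invoke identity~\eqref{utju7bn} of Theorem~\ref{thm.qtnqdr9} to reduce the double sum to $\sigma_{n-1}$.

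The only difference lies in how $\sigma_m$ is obtained. The paper simply cites the known evaluations~\eqref{eq.wt34kp1} and~\eqref{eq.jupxl7y} from Boyadzhiev, whereas you derive them from scratch via the integral representations $H_{j+r}=\int_0^1\frac{1-(1-t)^{j+r}}{t}\,dt$ and $O_j=\int_0^1\frac{1-u^{2j}}{1-u^2}\,du$, collapse the finite binomial sum inside the integral, and recognise the resulting Beta integrals. Your computations are accurate (including the Gamma-duplication step that turns $\tfrac12B(\tfrac12,m)$ into $2^{2m-1}/\bigl(m\binom{2m}{m}\bigr)$), and your handling of the $n=1$ boundary case for the first identity is exactly right. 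The trade-off is that the paper's version is shorter by outsourcing these transforms to the literature, while your version is self-contained and makes transparent why the inverse binomial coefficients appear; neither approach is materially more difficult than the other.
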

\begin{proof}
From~\cite[Equation (9.37)]{Boya2}:
\begin{equation}
\sum_{k = 0}^n {( - 1)^k \binom{{n}}{k}H_{k + r} }  =  - \frac{1}{n}\binom{{n + r}}r^{ - 1},\quad n\ne0,\label{eq.wt34kp1}
\end{equation}
and~\cite[Equation (9.51)]{Boya2}:
\begin{equation}\label{eq.jupxl7y}
\sum_{k = 0}^n {( - 1)^k \binom{{n}}{k}O_k }  =  - \binom{{2n}}{n}^{ - 1} \frac{{2^{2n - 1} }}{n},
\end{equation}
we can identify the binomial-transform pair $(s_n)$ and $(\sigma_n)$, $n=0,1,2,\ldots$, where
\begin{equation}\label{gxb8nr4}
s_n  = H_{n + r} ,\quad \sigma_n  =  \frac{{\delta _{n0}(1+H_r)-1 }}{{n + \delta _{n0} }}\binom{{n + r}}r^{ - 1},  
\end{equation}
and also $(t_n)$ and $(\tau_n)$, $n=0,1,2,\ldots$, where
\begin{equation}\label{h5ajier}
t_n  = O_n ,\quad \tau_n = - \frac{{1 - \delta _{n0} }}{{n + \delta_{n0}}}\binom{{2n}}{n}^{ - 1} 2^{2n - 1}. 
\end{equation}
Here and throughout this paper, $\delta_{ij}$ is Kronecker's delta having the value $1$ when $i=j$ and zero otherwise.
The results follow upon using~\eqref{gxb8nr4} and~\eqref{h5ajier} in Theorem~\ref{thm.qtnqdr9}.
\end{proof}

\begin{proposition}
If $n$ is a positive integer, then
\begin{equation}
\sum_{k = 0}^n {\sum_{j = 0}^k {( - 1)^j \binom{{n}}{j}H_{k + r - j} } } = (-1)^n
 \begin{cases}
 -H_{r+1},&\text{if $n=1$;}  \\ 
 \frac{1}{{n - 1}}\binom{{n + r - 1}}{r}^{ - 1} - \frac{1}{n}\binom{{n + r}}{r}^{ - 1} ,&\text{if $n>1$.} 
 \end{cases} 
\end{equation}
\end{proposition}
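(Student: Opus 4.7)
The plan is to recognize that the sum on the left-hand side fits directly into the framework of Corollary~\ref{cor_Ber_fin}, which states that
\[
\sum_{k=0}^n \sum_{j=0}^k (-1)^j \binom{n}{j} s_{k-j} = (-1)^n (\sigma_n - \sigma_{n-1})
\]
for any binomial transform pair $\{s_n,\sigma_n\}$. Here I take $s_k = H_{k+r}$, which is exactly the first component of the pair already recorded in~\eqref{gxb8nr4}, whose partner is
\[
\sigma_n = \frac{\delta_{n0}(1+H_r) - 1}{n + \delta_{n0}} \binom{n+r}{r}^{-1}.
\]
So the proof reduces to substituting this pair into the identity of Corollary~\ref{cor_Ber_fin} and evaluating $(-1)^n(\sigma_n - \sigma_{n-1})$ in closed form.

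The main computation will be a short case analysis in the variable $n$. For $n \geq 2$, both $\sigma_n$ and $\sigma_{n-1}$ are given by the generic branch $\sigma_m = -\tfrac{1}{m}\binom{m+r}{r}^{-1}$, so I immediately obtain
\[
(-1)^n(\sigma_n - \sigma_{n-1}) = (-1)^n\left(\tfrac{1}{n-1}\binom{n+r-1}{r}^{-1} - \tfrac{1}{n}\binom{n+r}{r}^{-1}\right),
\]
which matches the claimed expression. For $n=1$ the special value $\sigma_0 = H_r$ kicks in, so I must compute
\[
(-1)(\sigma_1 - \sigma_0) = -\!\left(-\tfrac{1}{r+1} - H_r\right) = H_r + \tfrac{1}{r+1} = H_{r+1},
\]
which agrees with $(-1)^1 \cdot (-H_{r+1})$ as claimed.

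The only subtlety, and the step I expect to be the mildly delicate part, is keeping the $\sigma_0$ branch separate from the general formula: writing $\sigma_0 = H_r$ (rather than blindly extending the $-\tfrac{1}{n}\binom{n+r}{r}^{-1}$ expression to $n=0$) is precisely what forces the piecewise form of the right-hand side at $n=1$ and makes the harmonic-number term $H_{r+1}$ appear through the telescoping identity $H_r + \tfrac{1}{r+1} = H_{r+1}$. Once this is handled, nothing further is needed: the statement follows immediately from Corollary~\ref{cor_Ber_fin} and the known binomial transform evaluation~\eqref{eq.wt34kp1}.
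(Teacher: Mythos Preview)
Your proof is correct and follows exactly the paper's approach: the paper simply says to use~\eqref{gxb8nr4} in~\eqref{b5uqn54}, which is precisely your substitution of the pair $s_k=H_{k+r}$, $\sigma_k$ as in~\eqref{gxb8nr4} into the identity of Corollary~\ref{cor_Ber_fin}. Your explicit case analysis at $n=1$ versus $n\ge 2$ just fills in the details the paper leaves implicit.
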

\begin{proof}
Use~\eqref{gxb8nr4} in~\eqref{b5uqn54}.
\end{proof}
\begin{lemma}[{\cite[Section 13]{Adegoke0}}]
If $n$ is a non-negative integer, then
\begin{equation}\label{ey4kpm3}
\sum_{k = 0}^n {( - 1)^k \binom{{n}}{k}2^{ - k} \binom{{2k}}{k}\left( {H_k - O_k } \right)}
 = \begin{cases}
 2^{ - n - 1} \binom{{n}}{{n/2}}H_{n/2},&\text{if $n$ is even;}  \\ 
 0,&\text{if $n$ is odd.} \\ 
 \end{cases} 
\end{equation}
\end{lemma}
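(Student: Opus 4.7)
The plan is to pass to ordinary generating functions. A useful preliminary observation is the identity $H_{2k} = O_k + \tfrac{1}{2} H_k$, obtained by splitting the harmonic sum into even and odd denominators; equivalently, $H_k - O_k = \tfrac{3}{2} H_k - H_{2k}$. This rewriting already hints at the parity collapse visible on the right-hand side.

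I would first establish two auxiliary generating functions. Starting from $\binom{2k}{k}/4^k = (-1)^k \binom{-1/2}{k}$, differentiating $\sum_k \binom{-1/2+\varepsilon}{k} z^k = (1+z)^{-1/2+\varepsilon}$ with respect to $\varepsilon$ at $\varepsilon = 0$, and then replacing $z \mapsto -z$, one obtains
\begin{equation*}
\sum_{k \ge 0} \frac{\binom{2k}{k}}{4^k} O_k\, z^k = -\frac{\log(1-z)}{2\sqrt{1-z}}.
\end{equation*}
Inserting $H_k = \int_0^1 (1-t^k)/(1-t)\, dt$ into the analogous sum weighted by $H_k$, interchanging summation and integration, and carrying out the resulting single integral (via the substitutions $u = 1-tz$ followed by $\sqrt{1+w} = s$) produces
\begin{equation*}
\sum_{k \ge 0} \frac{\binom{2k}{k}}{4^k} H_k\, z^k = \frac{2 \log\!\bigl(\tfrac{1+\sqrt{1-z}}{2}\bigr) - \log(1-z)}{\sqrt{1-z}}.
\end{equation*}
Subtracting gives the closed form $G(z) := \sum_k \binom{2k}{k} 4^{-k} (H_k - O_k) z^k = \tfrac{1}{\sqrt{1-z}} \log\!\bigl(\tfrac{(1+\sqrt{1-z})^2}{4\sqrt{1-z}}\bigr)$.

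For the alternating-binomial sum, I would invoke the standard OGF duality $\sum_{n \ge 0} \bigl(\sum_{k=0}^n (-1)^k \binom{n}{k} a_k\bigr) z^n = \tfrac{1}{1-z} A(-z/(1-z))$, applied with $A(z) = G(2z)$ so that the $2^{-k}$ factor in $a_k = 2^{-k}\binom{2k}{k}(H_k - O_k)$ is absorbed into the dilation. After the substitution $v = \sqrt{(1+z)/(1-z)}$ and the observation $v + 1/v = 2/\sqrt{1-z^2}$, the key algebraic simplification $(1+v)^2/(4v) = (1+\sqrt{1-z^2})/(2\sqrt{1-z^2})$ collapses the generating function of $S_n$ to
\begin{equation*}
\sum_{n \ge 0} S_n z^n = \frac{1}{\sqrt{1-z^2}} \log\!\left(\frac{1+\sqrt{1-z^2}}{2\sqrt{1-z^2}}\right).
\end{equation*}
This is exactly $\tfrac{1}{2}$ times the $H_k$-generating function derived above, evaluated at $z^2$ in place of $z$; reading off coefficients then yields $S_n = 0$ for $n$ odd and $S_{2m} = 2^{-2m-1}\binom{2m}{m} H_m$, as claimed. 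The main obstacle I expect is the closed-form evaluation of the integral that produces the generating function for $\binom{2k}{k} H_k$; the remaining manipulations are mechanical bookkeeping, but the vanishing of odd-indexed terms hinges on the non-obvious collapse of $(1+v)^2/(4v)$ into a function of $z^2$ alone.
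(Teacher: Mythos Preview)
Your generating-function argument is correct in every step: the closed forms for $\sum_k \binom{2k}{k}4^{-k}O_k z^k$ and $\sum_k \binom{2k}{k}4^{-k}H_k z^k$, the Euler-transform identity $\sum_n S_n z^n = \tfrac{1}{1-z}A(-z/(1-z))$, and the algebraic collapse $(1+v)^2/(4v)=(1+\sqrt{1-z^2})/(2\sqrt{1-z^2})$ all check out, and recognising the result as one half of the $H_k$-series evaluated at $z^2$ yields precisely $S_{2m}=2^{-2m-1}\binom{2m}{m}H_m$ and $S_{2m+1}=0$.

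As for comparison: the present paper does not prove this lemma at all---it is imported verbatim from \cite[Section 13]{Adegoke0} and used as a black box. So there is no in-paper argument to weigh yours against. Your proof is a self-contained analytic derivation; the only remark is that the initial rewriting $H_k-O_k=\tfrac32 H_k-H_{2k}$, while correct, is never actually used in what follows (you work directly with the difference of the two generating functions), so it could be dropped.
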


\begin{proposition}
If $n$ is a non-negative integer, then
\begin{align}
&\sum_{k = 0}^n {\sum_{j = 0}^k {( - 1)^j \binom{{n}}{j}2^{k - j} \binom{{2\left( {n - k + j} \right)}}{{n - k + j}}\left( {H_{n - k + j}  - O_{n - k + j} } \right)} }\nonumber\\
&\qquad =\begin{cases}
 \binom{{n - 1}}{{\left( {n - 1} \right)/2}}H_{\left( {n - 1} \right)/2},&\text{if $n$ is odd;}  \\ 
 0,&\text{if $n$ is even.} \\ 
 \end{cases} 
\end{align}
\end{proposition}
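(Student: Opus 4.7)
My plan is to recognize that the preceding lemma gives us precisely the binomial transform pair needed to invoke the particular case \eqref{utju7bn} of Theorem~\ref{thm.qtnqdr9}, after a small rescaling to absorb the $2^{-k}$ factor.

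Concretely, I would define $s_k = 2^{-k}\binom{2k}{k}(H_k - O_k)$ for $k \ge 0$. The lemma with identity \eqref{ey4kpm3} says exactly that
\[
\sigma_k := \sum_{j=0}^k \binom{k}{j}(-1)^j s_j = \begin{cases} 2^{-k-1}\binom{k}{k/2}H_{k/2}, & k \text{ even}; \\ 0, & k \text{ odd}, \end{cases}
\]
so $\{s_k,\sigma_k\}$ is a binomial transform pair. Plugging $s_{n-k+j}$ into \eqref{utju7bn} yields
\[
\sum_{k=0}^n \sum_{j=0}^k (-1)^j \binom{n}{j} 2^{-(n-k+j)}\binom{2(n-k+j)}{n-k+j}\bigl(H_{n-k+j} - O_{n-k+j}\bigr) = \sigma_{n-1}.
\]

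The key observation now is that the factor $2^{k-j}$ appearing in the proposition matches $2^n \cdot 2^{-(n-k+j)}$, so multiplying through by $2^n$ converts the above identity directly into the left-hand side of the proposition equaling $2^n \sigma_{n-1}$. By the definition of $\sigma$, this quantity vanishes when $n-1$ is odd (i.e.\ $n$ even), and equals $2^n \cdot 2^{-n}\binom{n-1}{(n-1)/2}H_{(n-1)/2} = \binom{n-1}{(n-1)/2}H_{(n-1)/2}$ when $n-1$ is even (i.e.\ $n$ odd), which is exactly the claimed case split.

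There is no real obstacle here beyond careful bookkeeping of the exponents of $2$ and correctly reading off the binomial transform pair from the lemma; the proof reduces to a direct substitution into \eqref{utju7bn}.
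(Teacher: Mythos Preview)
Your proof is correct and is exactly the paper's approach: the paper's one-line proof reads ``Use~\eqref{ey4kpm3} in Theorem~\ref{thm.qtnqdr9},'' which is precisely your substitution of the pair $s_k=2^{-k}\binom{2k}{k}(H_k-O_k)$, $\sigma_k$ from \eqref{ey4kpm3} into the particular case \eqref{utju7bn}, followed by the rescaling by $2^n$.
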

\begin{proof}
Use~\eqref{ey4kpm3} in Theorem~\ref{thm.qtnqdr9}.
\end{proof}

\begin{proposition}
If $n$ is a non-negative integer, then
\begin{align}
&\sum_{k = 1}^n {\sum_{j = 0}^{k - 1} {( - 1)^j n\binom{{n}}{j}2^{k - j} \binom{{2\left( {n - k + j} \right)}}{{n - k + j}}\frac{{\left( {H_{n - k + j}  - O_{n - k + j} } \right)}}{{k - j}}} } \nonumber\\
&\qquad = ( - 1)^{n + 1} \binom{{2n}}{n}\left( {H_n  - O_n } \right) +
 \begin{cases}
 \frac{1}{2}\binom{{n}}{{n/2}}H_{n/2},&\text{if $n$ is even;}  \\ 
 0,&\text{if $n$ is odd.} \\ 
 \end{cases} 
\end{align}
\end{proposition}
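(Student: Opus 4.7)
The plan is to apply Corollary \ref{cor.cp9d6at}, specifically identity \eqref{msdi8n1}, to the binomial transform pair that can be read off from Lemma \eqref{ey4kpm3}. The shape of the left-hand side (with the factor $n\binom{n}{j}/(k-j)$ and indices $n-k+j$) immediately flags \eqref{msdi8n1} as the right tool, while the prior proposition in this subsection already uses the same pair through Theorem~\ref{thm.qtnqdr9}, so reusing it here for the ``$t_n=1/n$'' version of the theorem is the natural move.

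First, I would identify the pair $\{s_n,\sigma_n\}$ by setting
\[
s_k = 2^{-k}\binom{2k}{k}(H_k-O_k),\qquad \sigma_k = \begin{cases} 2^{-k-1}\binom{k}{k/2}H_{k/2}, & k\text{ even};\\ 0, & k\text{ odd},\end{cases}
\]
which is precisely the content of Lemma~\eqref{ey4kpm3}. Next, I would substitute $s_{n-k+j}=2^{-n}\cdot 2^{k-j}\binom{2(n-k+j)}{n-k+j}(H_{n-k+j}-O_{n-k+j})$ into the left-hand side of~\eqref{msdi8n1}, and then multiply the whole identity through by $2^n$. The factor $2^{-n}$ cancels and one lands on exactly the double sum in the proposition.

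It then only remains to simplify the right-hand side $2^n(\sigma_n-(-1)^n s_n)$. The piece $2^n s_n$ collapses to $\binom{2n}{n}(H_n-O_n)$, contributing $-(-1)^n\binom{2n}{n}(H_n-O_n)=(-1)^{n+1}\binom{2n}{n}(H_n-O_n)$. The piece $2^n\sigma_n$ is $\tfrac12\binom{n}{n/2}H_{n/2}$ when $n$ is even and $0$ when $n$ is odd, giving precisely the case split in the statement.

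The only obstacle is bookkeeping of the powers of $2$ (ensuring $2^{-(n-k+j)}$ indeed rebalances to $2^{k-j}$ after the global multiplication by $2^n$); this is a routine check. No new ingredient beyond \eqref{msdi8n1} and \eqref{ey4kpm3} is needed.
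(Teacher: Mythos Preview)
Your proposal is correct and follows exactly the paper's approach: the paper's proof reads ``Use~\eqref{ey4kpm3} in Corollary~\ref{cor.cp9d6at},'' and you have spelled out precisely this, identifying the pair $\{s_k,\sigma_k\}$ from~\eqref{ey4kpm3}, plugging it into the particular case~\eqref{msdi8n1}, and clearing the factor $2^{-n}$.
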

\begin{proof}
Use~\eqref{ey4kpm3} in Corollary~\ref{cor.cp9d6at}.
\end{proof}

\begin{lemma}[{\cite[Section 13]{Adegoke0}}]\label{lem.sr1lpuj}
If $n$ is a non-negative integer, then
\begin{equation}
\sum_{k = 0}^n {( - 1)^k \binom{{n}}{k}\frac{{2^k }}{{\left( {k + 1} \right)^2 }}}  = O_{\left\lfloor {(n + 2)/2} \right\rfloor } 
\end{equation}
and
\begin{align}\label{y56cnyf}
\sum_{k = 0}^n {( - 1)^k \binom{{n}}{k}\frac{{2^k }}{{\left( {k + 1} \right)^3 }}}  &= \frac{1}{4}\frac{{H_{n + 1}^2  + H_{n + 1}^{(2)} }}{{n + 1}} - \frac{1}{8}\frac{{H_{\left\lceil {n/2} \right\rceil }^2  + H_{\left\lceil {n/2} \right\rceil }^{(2)} }}{{n + 1}} + \frac{1}{2}\frac{{O_{\left\lfloor {(n + 2)/2} \right\rfloor }^2  + O_{\left\lfloor {(n + 2)/2} \right\rfloor }^{(2)} }}{{n + 1}}\nonumber\\
&\qquad - \frac{1}{{2\left( {n + 1} \right)}}\sum_{k = 0}^n {( - 1)^k \frac{{H_{k + 1} }}{{k + 1}}} .
\end{align}
\end{lemma}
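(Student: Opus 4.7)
My plan is to attack both identities via the standard integral representations
\[
\frac{1}{(k+1)^2} = -\int_0^1 x^k \ln x\, dx, \qquad \frac{1}{(k+1)^3} = \frac{1}{2}\int_0^1 x^k \ln^2 x\, dx,
\]
so that after swapping sum and integral the binomial theorem collapses each sum into an integral of a polynomial times a power of $\ln x$. Concretely, the first identity reduces to evaluating $-\int_0^1 (1-2x)^n \ln x\, dx$, and the second to evaluating $\tfrac{1}{2}\int_0^1 (1-2x)^n \ln^2 x\, dx$. In both cases the substitution $u = 1-2x$ sends the domain to $[-1,1]$ and replaces $\ln x$ by $\ln\tfrac{1-u}{2}$, which is what enables parity filtering via $\int_{-1}^{1} u^m\, du = (1-(-1)^{m+1})/(m+1)$.

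\textbf{First identity.} After the substitution, I expand $-\ln(1-u) = \sum_{j\geq 1} u^j/j$ and use that $\int_{-1}^1 u^{n+j}\, du$ vanishes unless $n+j$ is even. The surviving reciprocals assemble into a partial sum of $1/(2i-1)$ truncated at exactly $\lfloor (n+2)/2 \rfloor$, matching the right-hand side. The $\ln 2$ piece from $\ln\frac{1-u}{2}$ integrates against $u^n$ to zero for odd $n$ and is absorbed into the constant term for even $n$.

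\textbf{Second identity.} The same substitution produces an integral of $u^n \ln^2\tfrac{1-u}{2}$; expanding the square gives three pieces involving $\ln^2(1-u)$, $\ln(1-u)\cdot\ln 2$, and $\ln^2 2$. For the dominant piece I use the classical generating function $\ln^2(1-u) = 2\sum_{k\geq 1}\frac{H_{k-1}}{k}u^k$, so that integrating against $u^n$ and parity filtering yields a double sum whose inner alternating tail is precisely $\sum_{k=0}^n (-1)^k H_{k+1}/(k+1)$, appearing as the last summand on the right of \eqref{y56cnyf}. The remaining contributions are rearranged using Euler's identity $H_m^2 + H_m^{(2)} = 2\sum_{k=1}^m H_k/k$ and its odd-harmonic counterpart, producing the coefficients $1/4$, $-1/8$, and $1/2$ in front of the $H_{n+1}$, $H_{\lceil n/2\rceil}$, and $O_{\lfloor(n+2)/2\rfloor}$ terms respectively.

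\textbf{Main obstacle.} The main difficulty is the parity bookkeeping in the second identity: the appearance of both $\lceil n/2\rceil$ and $\lfloor (n+2)/2\rfloor$ on the right-hand side signals that the argument branches on the parity of $n$, and the four distinct harmonic-type terms must separate cleanly with the stated coefficients. My intended safeguard against coefficient errors is to verify the identity for small $n$ in each parity and then apply a discrete difference argument $a_{n+1} - a_n$, which simplifies the integrand by a factor $(1-2x)-1 = -2x$ and may allow an induction that only requires closed evaluations at the boundary.
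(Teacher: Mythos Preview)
The paper does not actually prove this lemma: it is quoted verbatim from \cite[Section 13]{Adegoke0} and used as a black box, so there is no in-paper argument to compare against. Your integral-representation strategy (writing $1/(k+1)^m$ as $\int_0^1 x^k$ times a power of $\ln x$, collapsing the binomial sum to $(1-2x)^n$, and then parity-filtering after the substitution $u=1-2x$) is a perfectly standard and reasonable route for identities of this shape, and it is genuinely different from what the paper does, which is simply to cite the result.

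That said, what you have written is a plan rather than a proof, and the second identity in particular is where the real work lies. Two cautions. First, after the substitution the integrand involves $\ln(1-u)$ on $[-1,1]$, which is singular at $u=1$; the termwise integration of its power series on that interval needs a justification (e.g.\ monotone/dominated convergence or an Abel-type argument), not just a formal swap. Second, your description of how the four harmonic blocks $H_{n+1}$, $H_{\lceil n/2\rceil}$, $O_{\lfloor(n+2)/2\rfloor}$ and the alternating tail $\sum (-1)^k H_{k+1}/(k+1)$ separate with the exact coefficients $1/4$, $-1/8$, $1/2$, $-1/2$ is asserted rather than shown; this is precisely the ``parity bookkeeping'' you flag as the main obstacle, and it will not resolve itself without an explicit case split on the parity of $n$ and careful index shifts. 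Your proposed safeguard of checking small cases and then passing to a first difference $a_{n+1}-a_n$ is sound in principle, but note that the right-hand side of \eqref{y56cnyf} mixes $n+1$, $\lceil n/2\rceil$, and $\lfloor(n+2)/2\rfloor$, so its discrete difference is itself parity-dependent and not obviously simpler than the original. If you pursue this route, it may be cleaner to treat even and odd $n$ as two separate one-parameter families from the outset.
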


\begin{proposition}
If $n$ is a positive integer and $m$ is a non-negative integer, then
\begin{equation}
\sum_{k = 0}^n {\sum_{j = 0}^k {( - 1)^j \binom{{n}}{j}\frac{{2^{m + n + j - k} }}{{\left( {m + n + j - k + 1} \right)^2 }}} }  = \sum_{k = 0}^m {( - 1)^k \binom{{m}}{k}O_{\left\lceil {(n + k)/2} \right\rceil } } 
\end{equation}
and
\begin{align}
\sum_{k = 0}^n {\sum_{j = 0}^k {( - 1)^j \binom{{n}}{j}\frac{{2^{n + j - k} }}{{\left( {n + j - k + 1} \right)^3 }}} }  &= \frac{1}{4}\frac{{H_n^2  + H_n^{(2)} }}{n} - \frac{1}{8}\frac{{H_{\left\lfloor {n/2} \right\rfloor }^2  + H_{\left\lfloor {n/2} \right\rfloor }^{(2)} }}{n} + \frac{1}{2}\frac{{O_{\left\lceil {n/2} \right\rceil }^2  + O_{\left\lceil {n/2} \right\rceil }^{(2)} }}{n}\nonumber\\
&\qquad+ \frac{1}{{2n}}\sum\limits_{k = 1}^n {( - 1)^k \frac{{H_k }}{k}}.
\end{align}
\end{proposition}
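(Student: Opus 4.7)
The plan is to read Lemma~\ref{lem.sr1lpuj} as identifying two binomial transform pairs and then feed each one into Theorem~\ref{thm.qtnqdr9}. For the first identity, I would take
\[
s_k = \frac{2^k}{(k+1)^2}, \qquad \sigma_k = O_{\lfloor (k+2)/2\rfloor},
\]
which is precisely the content of the first part of Lemma~\ref{lem.sr1lpuj} (rewritten so that the left-hand side is the binomial transform of $s_k$). Plugging these into the general identity of Theorem~\ref{thm.qtnqdr9} gives
\[
\sum_{k=0}^n\sum_{j=0}^k(-1)^j\binom{n}{j}\frac{2^{m+n-k+j}}{(m+n-k+j+1)^2}
=\sum_{k=0}^m(-1)^k\binom{m}{k}O_{\lfloor(n+k+1)/2\rfloor},
\]
and the elementary floor/ceiling identity $\lfloor(n+k+1)/2\rfloor=\lceil(n+k)/2\rceil$ finishes this part.

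For the second identity, I would use the pair
\[
s_k = \frac{2^k}{(k+1)^3}, \qquad \sigma_k = \text{the right-hand side of \eqref{y56cnyf}},
\]
and apply the particular case \eqref{utju7bn} of Theorem~\ref{thm.qtnqdr9}, which asserts
\[
\sum_{k=0}^n\sum_{j=0}^k(-1)^j\binom{n}{j}s_{n+j-k}=\sigma_{n-1}.
\]
Substituting $k\mapsto n-1$ in the formula for $\sigma_k$ produces exactly the right-hand side of the claim, provided we simplify the floor/ceiling expressions via $\lceil(n-1)/2\rceil=\lfloor n/2\rfloor$ and $\lfloor(n+1)/2\rfloor=\lceil n/2\rceil$, and reindex the trailing alternating sum. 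Concretely, the substitution $k\mapsto k-1$ in $\sum_{k=0}^{n-1}(-1)^k H_{k+1}/(k+1)$ and the sign flip turn $-\tfrac{1}{2n}\sum_{k=0}^{n-1}(-1)^k H_{k+1}/(k+1)$ into $\tfrac{1}{2n}\sum_{k=1}^n(-1)^k H_k/k$, as required.

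The only mildly delicate point is the bookkeeping of the floor and ceiling functions when the arguments are shifted by the substitutions $k\mapsto n+k-1$ in the first identity and $k\mapsto n-1$ in the second; everything else is a direct application of Theorem~\ref{thm.qtnqdr9}, so no new analytic content is needed.
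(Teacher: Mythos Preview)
Your proposal is correct and follows exactly the route the paper takes: the paper's proof is the one-line ``Use Lemma~\ref{lem.sr1lpuj} and Theorem~\ref{thm.qtnqdr9}'', and you have simply unpacked that, including the floor/ceiling bookkeeping and the reindexing of the alternating harmonic sum.
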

\begin{proof}
Use~Lemma~\ref{lem.sr1lpuj} and Theorem~\ref{thm.qtnqdr9}.
\end{proof}

\begin{proposition}
If $n$ is a positive integer, then
\begin{align}
\sum_{k = 0}^n \sum_{j = 0}^k (- 1)^j \binom{n}{j} H_{n + j - k} &= - \frac{1}{{n - 1}},\quad n\ne 1 \label{jw8f41m},\\
\sum_{k = 0}^n \sum_{j = 0}^k (- 1)^j \binom{n}{j} H_{n + j - k}^2 &= \frac{H_{n - 1}}{n - 1} - \frac{2}{(n-1)^2},\quad n\ne 1\label{qwgdu5z},
\end{align}
and
\begin{equation}\label{gt3kuk2}
\sum_{k = 0}^n (- 1)^k \sum_{j = 0}^k \binom{n}{j} H_{n + j - k} = (-1)^n 2^{n - 1} \left(H_{n - 1} - \sum_{k = 1}^{n - 1} \frac{1}{k2^k} \right).
\end{equation}
\end{proposition}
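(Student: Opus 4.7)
The plan is to handle the three identities in the order given, since the first two fall under Theorem~\ref{thm.qtnqdr9}'s corollary \eqref{utju7bn}, while the third requires a short re-indexing argument because the sign $(-1)^k$ attaches to $k$ rather than $j$.

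For \eqref{jw8f41m}, I would apply \eqref{utju7bn} directly to $s_n=H_n$. The formula~\eqref{eq.wt34kp1} evaluated at $r=0$ identifies the binomial transform partner $\sigma_n=-1/n$ for $n\geq1$ (with $\sigma_0=0$), so $\sigma_{n-1}=-1/(n-1)$ for $n\geq2$, which is exactly the claim. For \eqref{qwgdu5z}, I would take $s_n=H_n^2$ in \eqref{utju7bn}, and the work is to find its binomial transform. I would invoke Boyadzhiev's identity (stated in the proof of Corollary~\ref{cor.gca1g47}),
\begin{equation*}
\sum_{k=0}^n(-1)^k\binom{n}{k}H_k\,s_k=\sigma_n H_n-\sum_{k=1}^n\frac{\sigma_{n-k}}{k},
\end{equation*}
applied to the pair $\{H_n,-1/n\}$. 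The resulting sum $\sum_{k=1}^{n-1}\frac{1}{k(n-k)}$ telescopes via partial fractions to $2H_{n-1}/n$, giving
\begin{equation*}
\sum_{k=0}^n(-1)^k\binom{n}{k}H_k^2=\frac{H_n}{n}-\frac{2}{n^2}.
\end{equation*}
Shifting $n\mapsto n-1$ in \eqref{utju7bn} then yields \eqref{qwgdu5z}.

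For \eqref{gt3kuk2}, since \eqref{utju7bn} cannot be applied as stated, I would instead swap the order of summation. Setting $\ell=k-j$ and using the standard partial alternating sum $\sum_{m=0}^{N}(-1)^m\binom{n}{m}=(-1)^N\binom{n-1}{N}$, a short calculation collapses the double sum to
\begin{equation*}
\sum_{k=0}^n(-1)^k\sum_{j=0}^k\binom{n}{j}H_{n+j-k}=(-1)^n\sum_{m=0}^{n-1}\binom{n-1}{m}H_m.
\end{equation*}
Then I would quote the classical evaluation
\begin{equation*}
\sum_{k=0}^{N}\binom{N}{k}H_k=2^N\!\left(H_N-\sum_{j=1}^{N}\frac{1}{j\cdot 2^j}\right),
\end{equation*}
(derivable by inverting the pair $\{H_n,-1/n\}$ or by a direct integral representation) with $N=n-1$ to conclude.

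The main obstacle is the second identity, which requires producing the binomial transform of $H_n^2$ in closed form; however, this reduces via Boyadzhiev's formula to the telescoping sum $\sum 1/(k(n-k))$, which is routine. The third identity's difficulty is merely the re-indexing to expose the non-alternating binomial sum $\sum\binom{n-1}{m}H_m$, after which the classical closed form finishes the argument.
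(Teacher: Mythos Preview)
Your argument is correct for all three identities, but it diverges from the paper's route, particularly for~\eqref{gt3kuk2}. The paper treats all three uniformly via Corollary~\ref{cor.gca1g47}: identity~\eqref{vsjufnd} (which packages~\eqref{utju7bn} together with Boyadzhiev's product formula) is applied with the three pairs $\{1,\delta_{n0}\}$, $\{H_n,-1/n\}$, and $\{(-1)^n,2^n\}$, respectively. For~\eqref{jw8f41m} your direct appeal to~\eqref{utju7bn} with $s_n=H_n$ is actually shorter than the paper's detour through~\eqref{vsjufnd} with $s_n=1$. For~\eqref{qwgdu5z} the two computations are essentially the same (both reduce to the partial-fraction sum $\sum_{k}1/(k(n-1-k))$), just differently packaged: you first compute the transform of $H_n^2$ and then invoke~\eqref{utju7bn}, whereas the paper invokes~\eqref{vsjufnd} directly. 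For~\eqref{gt3kuk2} the paper's choice $s_k=(-1)^k$ absorbs the troublesome $(-1)^k$ into $(-1)^j s_{n+j-k}$ and lands immediately on the right-hand side, since $\sigma_k=2^k$; your re-indexing to $(-1)^n\sum_{m=0}^{n-1}\binom{n-1}{m}H_m$ followed by the classical evaluation of $\sum_k\binom{N}{k}H_k$ is a valid and self-contained alternative, but it trades uniformity for an extra quoted identity. One small wording issue: in~\eqref{qwgdu5z} you do not ``shift $n\mapsto n-1$ in~\eqref{utju7bn}''; you simply read off $\sigma_{n-1}$ from~\eqref{utju7bn} as stated.
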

\begin{proof}
Use the binomial transform pair $s_n=1$ and $\sigma_n=\delta_{n0}$ in Corollary~\ref{cor.gca1g47} to obtain~\eqref{jw8f41m}. 
The binomial transform pair $\{s_n,\sigma_n\}$ where
\begin{equation*}
s_n = H_n \qquad \text{and}\qquad 
\sigma_n 
=\begin{cases}
  - \dfrac{1}{n},&\text{if $n\ne 0$;} \\ 
 0,&\text{if $n=0$;} \\ 
 \end{cases}
\end{equation*}
plugged into~\eqref{vsjufnd} gives
\begin{equation*}
\sum_{k = 0}^n \sum_{j = 0}^k ( - 1)^j \binom{{n}}{j} H_{n + j - k}^2 = -\frac{H_{n - 1}}{n - 1} + \sum_{k = 1}^{n - 2} \frac{1}{{k\left( {n - 1 - k} \right)}},
\end{equation*}
from which~\eqref{qwgdu5z} follows since
\begin{equation*}
\frac{1}{{k\left( {n - 1 - k} \right)}} = \frac{1}{{k\left( {n - 1} \right)}} + \frac{1}{{\left( {n - 1 - k} \right)\left( {n - 1} \right)}};
\end{equation*}
so that
\begin{equation*}
\sum_{k = 1}^{n - 2} {\frac{1}{{k\left( {n - 1 - k} \right)}}}  = \frac{{2H_{n - 2} }}{{n - 1}} = \frac{{2H_{n - 1} }}{{n - 1}} - \frac{2}{{\left( {n - 1} \right)^2 }}.
\end{equation*}
Identity~\eqref{gt3kuk2} is obtained by using the binomial transform pair $s_k=(-1)^k$ and $\sigma_k=2^k$ in~\eqref{vsjufnd}.
\end{proof}

\begin{proposition}
If $n$ is a positive integer, then
    $$\sum_{k=0}^n(-1)^k \sum_{j=0}^k \frac{n \binom{n}{j}}{k-j+1} H_{k-j+1} = \sum_{k=0}^n \binom{n}{k} \frac{(2k+1)(-2)^{n-k}}{k(k+1)^2}.$$
\end{proposition}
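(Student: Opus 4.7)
The plan is to obtain the identity as a direct specialization of the general corollary \eqref{eq:duble_harmonic_ssigma_sdv3uiwrh} to a well-chosen binomial transform pair. Looking at the right-hand side of the target, the factor $(-2)^{n-k}$ strongly suggests that the relevant pair is $\{s_n,\sigma_n\}=\{(-1)^n,2^n\}$, and the presence of $(-1)^k$ (rather than $(-1)^j$) on the left-hand side is exactly what one gets after absorbing a factor of $(-1)^{n-k+j}$ coming from $s_{n-k+j}=(-1)^{n-k+j}$.

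First I would verify that $\{(-1)^n,2^n\}$ is indeed a binomial transform pair, which is immediate:
\begin{equation*}
\sum_{k=0}^n\binom{n}{k}(-1)^k\,(-1)^k=\sum_{k=0}^n\binom{n}{k}=2^n,
\qquad
\sum_{k=0}^n\binom{n}{k}(-1)^k\,2^k=(1-2)^n=(-1)^n.
\end{equation*}

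Next I would substitute $s_m=(-1)^m$ and $\sigma_m=2^m$ into \eqref{eq:duble_harmonic_ssigma_sdv3uiwrh}. On the left-hand side, the sign bookkeeping gives
\begin{equation*}
(-1)^j s_{n-k+j}=(-1)^j(-1)^{n-k+j}=(-1)^n(-1)^k,
\end{equation*}
so the left-hand side of \eqref{eq:duble_harmonic_ssigma_sdv3uiwrh} becomes
\begin{equation*}
(-1)^n\sum_{k=0}^n(-1)^k\sum_{j=0}^k\frac{n\binom{n}{j}}{k-j+1}H_{k-j+1}.
\end{equation*}
On the right-hand side, $(-1)^k\sigma_{n-k}=(-1)^k 2^{n-k}$. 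Multiplying both sides by $(-1)^n$ and using $(-1)^{n+k}2^{n-k}=(-1)^{n-k}2^{n-k}=(-2)^{n-k}$ produces exactly the claimed identity.

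There is essentially no obstacle here beyond the sign bookkeeping outlined above; the whole argument is a two-line substitution once the correct binomial transform pair is identified. One should note that, just as in \eqref{eq:duble_harmonic_ssigma_sdv3uiwrh}, the $k=0$ term on the right-hand side is formally singular, and the sum is to be read as starting from $k=1$ (the factor $k$ in the denominator inherited from the $k$ in the numerator of \eqref{eq:oolyn_s_sigma} kills the $k=0$ contribution in the preceding derivation).
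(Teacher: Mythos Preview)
Your proof is correct and follows exactly the paper's approach: substitute the binomial transform pair $\{s_n,\sigma_n\}=\{(-1)^n,2^n\}$ into~\eqref{eq:duble_harmonic_ssigma_sdv3uiwrh} and simplify the signs. Your added remark that the $k=0$ term on the right must be read as vanishing (inherited from the $k=1$ lower limit in~\eqref{eq:oolyn_s_sigma}) is a helpful clarification of a point the paper leaves implicit.
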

\begin{proof}
Use the binomial transform pair $s_n=(-1)^n$ and $\sigma_n=2^n$ in \eqref{eq:duble_harmonic_ssigma_sdv3uiwrh}.
\end{proof}

\begin{proposition}
If $m$ is a positive integer, then
\begin{equation}
\sum_{n = 0}^m {\sum_{k = 0}^n {\sum_{j = 0}^k {( - 1)^j \binom{{n}}{j}H_{n + j - k} } } }  =  - H_{m - 1} 
\end{equation}
and
\begin{equation}
\sum_{n = 0}^m {\sum_{k = 0}^n {\sum_{j = 0}^k {( - 1)^j \binom{{n}}{j}H_{n + j - k}^2 } } }  = \frac{1}{2}\left( {H_{m - 1}^2  - 3H_{m - 1}^{(2)} } \right).
\end{equation}
\end{proposition}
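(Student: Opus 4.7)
The plan is to derive both identities by summing the single-index formulas \eqref{jw8f41m} and \eqref{qwgdu5z} from the previous proposition over $n$ from $0$ to $m$, after handling the boundary cases $n=0$ and $n=1$ that are excluded from those formulas.

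First I would verify that the inner double sum
$$S_n := \sum_{k=0}^n \sum_{j=0}^k (-1)^j \binom{n}{j} H_{n+j-k}$$
vanishes at $n=0$ (trivially, since the only term is $H_0=0$) and at $n=1$ (direct computation: the $k=0$ contribution is $H_1$ and the $k=1$ contribution is $H_0 - H_1 = -H_1$, so they cancel). An identical check shows that the squared-harmonic analogue $S_n^{(2)} := \sum_{k,j}(-1)^j\binom{n}{j}H_{n+j-k}^2$ also vanishes at $n=0,1$. Thus, for $n\ge 2$ we may freely apply \eqref{jw8f41m} and \eqref{qwgdu5z}.

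For the first identity, summing \eqref{jw8f41m} over $n$ gives
$$\sum_{n=0}^m S_n = \sum_{n=2}^m \left(-\frac{1}{n-1}\right) = -\sum_{k=1}^{m-1}\frac{1}{k} = -H_{m-1},$$
valid also for $m=1$ since then both sides equal $0$. For the second identity, \eqref{qwgdu5z} yields
$$\sum_{n=0}^m S_n^{(2)} = \sum_{n=2}^m\left(\frac{H_{n-1}}{n-1} - \frac{2}{(n-1)^2}\right) = \sum_{k=1}^{m-1}\frac{H_k}{k} - 2H_{m-1}^{(2)}.$$
At this stage the key tool is the classical Euler-type identity
$$\sum_{k=1}^{N}\frac{H_k}{k} = \frac{1}{2}\left(H_N^2 + H_N^{(2)}\right),$$
which is easily proved by induction (or by writing $H_k = H_{k-1} + 1/k$). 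Substituting $N=m-1$ gives
$$\sum_{n=0}^m S_n^{(2)} = \frac{1}{2}\left(H_{m-1}^2 + H_{m-1}^{(2)}\right) - 2H_{m-1}^{(2)} = \frac{1}{2}\left(H_{m-1}^2 - 3H_{m-1}^{(2)}\right),$$
as required.

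There is really no serious obstacle here; the proof is essentially a bookkeeping exercise combined with the classical formula for $\sum H_k/k$. The only minor subtlety worth stressing is the $n=1$ boundary, where the right-hand sides of \eqref{jw8f41m} and \eqref{qwgdu5z} are undefined but the original double sums nevertheless collapse to $0$, so that the telescoping over $n$ can be carried out without missing contributions.
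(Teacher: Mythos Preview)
Your proof is correct and follows exactly the approach of the paper, which simply says to sum \eqref{jw8f41m} and \eqref{qwgdu5z} from $n=2$ to $n=m$. You have supplied the details the paper omits: the verification that the $n=0,1$ contributions vanish and the classical identity $\sum_{k=1}^{N} H_k/k = \tfrac12\bigl(H_N^2+H_N^{(2)}\bigr)$ needed to simplify the second sum.
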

\begin{proof}
Sum~\eqref{jw8f41m} and~\eqref{qwgdu5z} from $n=2$ to $n=m$.
\end{proof}

\subsection{Identities involving Catalan numbers and harmonic numbers}

\begin{lemma}
If $n$ is a non-negative integer, then
\begin{align}
\sum_{k = 0}^n {( - 1)^k \binom{{n}}{k}2^{ - k} \binom{{k}}{{\left\lfloor {k/2} \right\rfloor }}}  &= 2^{ - n} C_n \label{w8utb12},\\
\sum_{k = 0}^n {( - 1)^k \binom{{n}}{k}2^{ - k} C_{k + 1} }  
&=\begin{cases}
 0 ,&\text{if $n$ is odd;}\\ 
 2^{ - n} C_{n/2},&\text{if $n$ is even.}  
 \end{cases}\label{i1hknxb}
\end{align}
\end{lemma}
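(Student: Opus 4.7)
The plan is to prove both identities by ordinary generating functions. For any sequence $(a_k)$ with OGF $A(x)=\sum_{k\ge 0} a_k x^k$, the signed binomial transform $b_n = \sum_{k=0}^n\binom{n}{k}(-1)^k a_k$ has OGF
$$B(x) \;=\; \frac{1}{1-x}\,A\!\left(\frac{-x}{1-x}\right),$$
a standard consequence of $\sum_{n\ge k}\binom{n}{k}x^n = x^k/(1-x)^{k+1}$. Both identities then follow by computing $A$, performing this substitution, and identifying the resulting $B$.

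For \eqref{w8utb12}, I would first establish
$$\sum_{k\ge 0} 2^{-k}\binom{k}{\lfloor k/2\rfloor}x^k \;=\; \frac{1}{x}\!\left(\sqrt{\tfrac{1+x}{1-x}}-1\right)$$
by splitting into even and odd parts: the even part is $\sum_m\binom{2m}{m}(x/2)^{2m}=(1-x^2)^{-1/2}$, while for the odd indices I would use $\binom{2m+1}{m}=\tfrac{1}{2}\binom{2m+2}{m+1}$ to rewrite the odd sum as $\tfrac{1}{x}\!\bigl((1-x^2)^{-1/2}-1\bigr)$, then combine. Substituting $x\mapsto -x/(1-x)$ and using $1+(-x/(1-x))=(1-2x)/(1-x)$ together with $1-(-x/(1-x))=1/(1-x)$, the argument of the square root collapses to $1-2x$; the prefactor $1/(1-x)$ cancels the $(1-x)$ emerging from the substitution, producing $B(x) = (1-\sqrt{1-2x})/x = \sum_n 2^{-n}C_n x^n$, which is \eqref{w8utb12}.

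For \eqref{i1hknxb}, the same strategy applies. From the Catalan OGF $\sum_k C_k y^k = (1-\sqrt{1-4y})/(2y)$, after an index shift and setting $y=x/2$, one obtains $A(x) := \sum_{k\ge 0} 2^{-k}C_{k+1}x^k = \tfrac{2}{x^2}\!\bigl(1-x-\sqrt{1-2x}\bigr)$. Applying the same substitution, using $\sqrt{(1+x)/(1-x)}=\sqrt{1-x^2}/(1-x)$ to simplify the resulting radical, and again cancelling the prefactor, one obtains $B(x)=\tfrac{2(1-\sqrt{1-x^2})}{x^2}$. Since $(2/z)(1-\sqrt{1-z})=\sum_n C_n 4^{-n}z^n$, the substitution $z=x^2$ identifies $B(x)=\sum_n C_n 2^{-2n}x^{2n}$, whose coefficients are exactly the right-hand side of \eqref{i1hknxb} (zero at odd $n$, equal to $2^{-n}C_{n/2}$ at even $n$). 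The main obstacle in both cases is the careful simplification of $A(-x/(1-x))$; both reduce to the same cancellation pattern, where the outside factor $1/(1-x)$ meets an emerging $(1-x)$ from the substitution, after which the rest is routine algebra.
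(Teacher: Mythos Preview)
Your generating-function argument is correct. The OGF computations for both $A(x)$ check out, and the substitution $x\mapsto -x/(1-x)$ together with the identities $1+\frac{-x}{1-x}=\frac{1-2x}{1-x}$ and $1-\frac{-x}{1-x}=\frac{1}{1-x}$ does produce the claimed cancellation of the outer factor $\frac{1}{1-x}$ against an emerging $(1-x)$ in each case, leaving $(1-\sqrt{1-2x})/x$ and $2(1-\sqrt{1-x^2})/x^2$, respectively. The final identifications with the Catalan OGF at $y=x/2$ and $z=x^2/4$ are routine.

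Your route is genuinely different from the paper's. The paper does not prove this lemma at all: it simply attributes \eqref{w8utb12} to MathWorld and \eqref{i1hknxb} to Suleiman and Sury, and moves on. What your approach buys is a self-contained derivation that avoids any appeal to outside sources, and it does so with a single uniform tool (the Euler transform of OGFs) rather than two separate references. The only minor comment is that your write-up is a sketch rather than a full proof; in particular, the phrase ``the main obstacle \ldots\ is the careful simplification'' would be better replaced by the two-line computation itself, since that is precisely where a reader will want to see the algebra spelled out.
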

\begin{proof}
Identity~\eqref{w8utb12} is from Mathworld~\cite{mathworld} while~\eqref{i1hknxb} is due to Suleiman and Sury~\cite{sulei23}.
\end{proof}

\begin{proposition}
If $n$ is a positive integer, then
\begin{equation}
\sum_{k = 0}^n {\sum_{j = 0}^k {( - 1)^j \binom{{n}}{j}2^{ - (m - k + j)} C_{m + n - k + j} } }  = \sum_{k = 0}^m {( - 1)^k \binom{{m}}{k}2^{ - (k - 1)} \binom{{n + k - 1}}{{\left\lfloor {\left( {n + k - 1} \right)/2} \right\rfloor }}} 
\end{equation}
and
\begin{equation}
\sum_{k = 0}^n {\sum_{j = 0}^k {( - 1)^j \binom{{n}}{j}2^{ - \left( {n + j - k} \right)} C_{n + j - k + 1} } }
=\begin{cases}
 2^{ - (n - 1)} C_{(n-1)/2},&\text{if $n$ is odd}; \\ 
 0,&\text{if $n$ is even}.  
 \end{cases} 
\end{equation}
In particular
\begin{equation}
\sum_{k = 0}^n \sum_{j = 0}^k ( - 1)^j \binom{{n}}{j} 2^{ - \left( {n + j - k} \right)} C_{n + j - k} = 2^{- (n - 1)} \binom{{n - 1}}{{\left\lfloor {(n - 1)/2} \right\rfloor }}.
\end{equation}
\end{proposition}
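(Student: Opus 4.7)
The plan is to invoke Theorem \ref{thm.qtnqdr9} with binomial transform pairs harvested from the preceding lemma. The key observation is that \eqref{w8utb12} tells us precisely that $\{s_k,\sigma_k\}$ with $s_k=2^{-k}\binom{k}{\lfloor k/2\rfloor}$ and $\sigma_k=2^{-k}C_k$ form a binomial transform pair; since the transform is involutory (the defining relations at the start of the introduction are symmetric in $s$ and $\sigma$ up to a sign convention), the swapped pair $\{2^{-k}C_k,\,2^{-k}\binom{k}{\lfloor k/2\rfloor}\}$ is also a pair. Similarly, \eqref{i1hknxb} gives a second pair $\{t_k,\tau_k\}$ with $t_k=2^{-k}C_{k+1}$ and
\[
\tau_k=\begin{cases}0,&\text{if $k$ is odd;}\\ 2^{-k}C_{k/2},&\text{if $k$ is even.}\end{cases}
\]

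For the first displayed identity, I would plug $s_k=2^{-k}C_k$ and $\sigma_k=2^{-k}\binom{k}{\lfloor k/2\rfloor}$ into the general form of Theorem \ref{thm.qtnqdr9},
\[
\sum_{k=0}^n\sum_{j=0}^k(-1)^j\binom{n}{j}s_{m+n-k+j}=\sum_{k=0}^m(-1)^k\binom{m}{k}\sigma_{n+k-1},
\]
and multiply both sides by $2^n$. On the left, $2^n\cdot 2^{-(m+n-k+j)}=2^{-(m-k+j)}$, producing exactly the Catalan expression of the proposition; on the right, $2^n\cdot 2^{-(n+k-1)}=2^{-(k-1)}$, producing the claimed central-binomial expression. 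The "in particular" case is even more immediate: apply the specialisation \eqref{utju7bn} to the same pair to obtain
\[
\sum_{k=0}^n\sum_{j=0}^k(-1)^j\binom{n}{j}2^{-(n+j-k)}C_{n+j-k}=\sigma_{n-1}=2^{-(n-1)}\binom{n-1}{\lfloor (n-1)/2\rfloor}.
\]

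For the second displayed identity, I would apply \eqref{utju7bn} directly to the pair $\{t_k,\tau_k\}$ derived from \eqref{i1hknxb}. The left-hand side of \eqref{utju7bn} becomes exactly $\sum_{k,j}(-1)^j\binom{n}{j}2^{-(n+j-k)}C_{n+j-k+1}$, and the right-hand side is $\tau_{n-1}$, which equals $2^{-(n-1)}C_{(n-1)/2}$ when $n-1$ is even (i.e.\ $n$ odd) and $0$ otherwise — precisely the piecewise formula stated.

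The main obstacle is purely bookkeeping: correctly identifying which sequence plays the role of $s_k$ versus $\sigma_k$ (the involutory character of the transform matters for getting $C_k$ as the input rather than the output), and propagating the power-of-two shifts so that the exponents in the final displays match the proposition verbatim. Once the pairs are in hand, no combinatorial identity beyond the two already listed in the lemma is required.
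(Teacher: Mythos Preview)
Your proposal is correct and matches the paper's own proof, which simply instructs to use \eqref{w8utb12} and \eqref{i1hknxb} in Theorem~\ref{thm.qtnqdr9}. Your explicit identification of the transform pairs (including the harmless swap of $s$ and $\sigma$ permitted by the involutory definition) and the power-of-two bookkeeping fill in precisely the details the paper leaves implicit.
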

\begin{proof}
Use~\eqref{w8utb12} and~\eqref{i1hknxb} in Theorem~\ref{thm.qtnqdr9}.
\end{proof}

\begin{proposition}
If $n$ is a non-negative integer, then
\begin{equation}
\sum_{k = 1}^n {\sum_{j = 0}^{k - 1} {(-1)^jn\binom{{n}}{j}2^{k - j} \frac{{C_{n - k + j + 1} }}{{k - j}}} }  = \begin{cases}
 C_{n + 1},&\text{if $n$ is odd;}  \\ 
 C_{n/2}  - C_{n + 1},&\text{if $n$ is even.}  \\ 
 \end{cases} 
\end{equation}
\end{proposition}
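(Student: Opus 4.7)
The plan is to recognize the inner expression as an instance of Corollary~\ref{cor.cp9d6at} (identity~\eqref{msdi8n1}) applied to a specific binomial transform pair built from identity~\eqref{i1hknxb}.

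First, I would set $s_m = 2^{-m} C_{m+1}$. By~\eqref{i1hknxb} the binomial transform of $(s_m)$ is
\begin{equation*}
\sigma_n = \sum_{k=0}^n (-1)^k \binom{n}{k} 2^{-k} C_{k+1} =
\begin{cases}
0, & \text{if $n$ is odd;} \\
2^{-n} C_{n/2}, & \text{if $n$ is even.}
\end{cases}
\end{equation*}
Since the binomial transform with alternating sign is involutive, $\{s_n,\sigma_n\}$ is a binomial transform pair in the sense of the paper.

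Next, I would substitute this pair directly into~\eqref{msdi8n1}, which asserts
\begin{equation*}
\sum_{k=1}^n \sum_{j=0}^{k-1} (-1)^j n \binom{n}{j} \frac{s_{n-k+j}}{k-j} = \sigma_n - (-1)^n s_n.
\end{equation*}
With $s_{n-k+j} = 2^{-(n-k+j)} C_{n-k+j+1}$, multiplying both sides by $2^n$ converts $2^{-(n-k+j)}$ inside the sum into $2^{k-j}$, and converts the right-hand side into $2^n \sigma_n - (-1)^n C_{n+1}$. Splitting on parity gives exactly $C_{n+1}$ when $n$ is odd and $C_{n/2} - C_{n+1}$ when $n$ is even, which is the claimed formula.

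The only nontrivial step is verifying that $\{2^{-n} C_{n+1}, \sigma_n\}$ really is a binomial transform pair; this is immediate from~\eqref{i1hknxb} together with the involutive nature of the signed binomial transform, so no serious obstacle is expected.
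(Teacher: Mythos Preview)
Your proposal is correct and follows exactly the paper's approach: the paper's proof consists of the single line ``Use~\eqref{i1hknxb} in~\eqref{msdi8n1},'' which is precisely what you do, including the implicit rescaling by $2^n$ that you make explicit.
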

\begin{proof}
Use~\eqref{i1hknxb} in~\eqref{msdi8n1}.
\end{proof}

\begin{proposition}
If $m$ and $n$ are non-negative integers, then
\begin{align}\label{g0638ke}
\sum_{k = 0}^n {\sum_{j = 0}^k {( - 1)^j \binom{{n}}{j}2^{2(k - j)} C_{m + n - k + j + 1} } }  = 2^{2m + 2} \sum_{k = 0}^m {( - 1)^k \binom{{m}}{k}2^{ - 2k} C_{n + k} } 
\end{align}
and
\begin{align}\label{sbl52oj}
&\sum_{k = 1}^n {\sum_{j = 0}^{k - 1} {( - 1)^j n\binom{{n}}{j}\frac{{2^{-2(m + n - k + j)} }}{{k - j}}C_{m + n - k + j + 1} } } \nonumber\\
&\qquad = \sum_{k = 0}^{m - 1} {( - 1)^k \binom{{m}}{k}2^{ - 2(k + n)} C_{k + n + 1} }  + \left( {( - 1)^m  - ( - 1)^n } \right)2^{ - 2(m + n)} C_{m + n + 1}.
\end{align}
In particular,
\begin{equation}
\sum_{k = 0}^n {\sum_{j = 0}^k {( - 1)^j \binom{{n}}{j}2^{2(k - j)} C_{n - k + j + 1} } }  = 4C_n 
\end{equation}
and
\begin{equation}
\sum_{k = 1}^n {\sum_{j = 0}^{k - 1} {( - 1)^j n\binom{{n}}{j}\frac{{2^{-2(n - k + j)} }}{{k - j}}C_{n - k + j + 1} } } 
=\begin{cases}
 2^{1-2n} C_{n+1},&\text{if $n$ is odd}; \\ 
 0,&\text{if $n$ is even.}  
 \end{cases} 
\end{equation}
\end{proposition}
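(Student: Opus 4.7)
The plan is to apply the machinery of Theorem~\ref{thm.qtnqdr9} and Corollary~\ref{cor.eb6nqko} to the sequence $s_n = C_{n+1}/4^n$. I expect this sequence to be self-inverse, so that $\{s_n,\sigma_n\}=\{C_{n+1}/4^n,\,C_{n+1}/4^n\}$ is a binomial transform pair. Granted this, everything else is bookkeeping: substitute $s_\ell=C_{\ell+1}/4^\ell$ into Theorem~\ref{thm.qtnqdr9} and multiply through by $4^{m+n}$ to clear denominators. The factor $4^{m+n}/4^{m+n-k+j}=2^{2(k-j)}$ appears inside the double sum on the left, and $4^{m+n}/4^{n+k-1}=2^{2m+2}\cdot 2^{-2k}$ on the right; this yields identity~\eqref{g0638ke}. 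Identity~\eqref{sbl52oj} comes out the same way from the self-inverse branch of Corollary~\ref{cor.eb6nqko}, rescaled by $4^{m+n}$.

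The main obstacle is proving self-invertibility:
\begin{equation*}
\sum_{k=0}^n (-1)^k\binom{n}{k}\frac{C_{k+1}}{4^k}=\frac{C_{n+1}}{4^n}.
\end{equation*}
My first attempt would be generating functions. Using $C(z)=(1-\sqrt{1-4z})/(2z)$ and the rationalization $1-\sqrt{1-x}=x/(1+\sqrt{1-x})$ gives the compact form
\begin{equation*}
f(x):=\sum_{n\ge 0}\frac{C_{n+1}}{4^n}x^n=\frac{4(1-\sqrt{1-x})^2}{x^2}=\frac{4}{(1+\sqrt{1-x})^2}.
\end{equation*}
The binomial-transform self-inverse condition is equivalent to $\frac{1}{1-x}f\!\left(\frac{-x}{1-x}\right)=f(x)$. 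Since $1+\frac{x}{1-x}=\frac{1}{1-x}$, this reduces to $f(-x/(1-x))=(1-x)f(x)$, a one-line algebraic check. An alternative route, should the generating function prove unwieldy, is to use the moment representation $C_k=\frac{1}{2\pi}\int_0^4 x^k\sqrt{(4-x)/x}\,dx$; the inner binomial sum collapses to $(4-x)^n/4^n$ under the integral, and the symmetry $x\leftrightarrow 4-x$ closes the argument.

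Finally, the particular cases should drop out by setting $m=0$: the right side of the first identity collapses to $2^{2}C_n=4C_n$, while in the second the sum $\sum_{k=0}^{-1}$ is empty and only the boundary term $\bigl((-1)^0-(-1)^n\bigr)2^{-2n}C_{n+1}$ survives, equaling $2^{1-2n}C_{n+1}$ for odd $n$ and $0$ for even $n$. No further work is needed beyond the verification of self-invertibility, which is the sole nontrivial step.
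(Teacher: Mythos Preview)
Your approach is essentially identical to the paper's: both use the self-inverse sequence $s_k=2^{-2k}C_{k+1}$ and plug it into Theorem~\ref{thm.qtnqdr9} for~\eqref{g0638ke} and into Corollary~\ref{cor.eb6nqko} for~\eqref{sbl52oj}. The only difference is that the paper simply cites the self-inverse identity from Donaghey~\cite{donaghey76}, whereas you prove it yourself via the generating function $f(x)=4/(1+\sqrt{1-x})^2$ and the Euler-transform functional equation; this is a clean independent verification but not a different strategy. One small correction: identity~\eqref{sbl52oj} is obtained by direct substitution into Corollary~\ref{cor.eb6nqko} with no rescaling needed (the $2^{-2(\cdot)}$ factors are already those of $s_\ell$), so your remark ``rescaled by $4^{m+n}$'' applies only to~\eqref{g0638ke}.
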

\begin{proof}
Donaghey~\cite{donaghey76} reported the following self-inverse identity:
\begin{equation*}
\sum_{k = 0}^n {( - 1)^k \binom{{n}}{k}\frac{{C_{k + 1} }}{{2^{2k} }}}  = \frac{{C_{n + 1} }}{{2^{2n} }},
\end{equation*}
from which we recognize $s_k=2^{-2k}C_{k+1}=\sigma_k$ whose use in Theorem~\ref{thm.qtnqdr9} produces~\eqref{g0638ke} and in Corollary~\ref{cor.eb6nqko} gives~\eqref{sbl52oj}.
\end{proof}

\begin{lemma}[{\cite[Section 13]{Adegoke0}}]
If $n$ is a non-negative integer, then
\begin{equation}\label{qhaw2rp}
\sum_{k = 0}^n {( - 1)^k \binom{{n}}{k}2^{ - k} C_{k + 1} \left( {H_{k + 2}  - O_{k + 1} } \right)}  =\begin{cases}
 2^{ - n - 1} C_{n/2} H_{(n + 2)/2} ,&\text{if $n$ is even;} \\ 
 0,&\text{if $n$ is odd.} \\ 
 \end{cases}
\end{equation}
\end{lemma}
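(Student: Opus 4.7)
My plan is to convert the harmonic factor into integrals and exploit the Catalan generating function. Using the standard representations $H_{k+2}=\int_0^1(1-y^{k+2})/(1-y)\,dy$ and $O_{k+1}=\int_0^1(1-y^{2k+2})/(1-y^2)\,dy$, a common-denominator calculation yields
\begin{equation*}
H_{k+2}-O_{k+1}=\int_0^1\frac{y-y^{k+2}(1+y)+y^{2k+2}}{1-y^2}\,dy.
\end{equation*}
Substituting this into the target sum and interchanging summation with integration, the left-hand side becomes a single integral in $y$ involving the polynomial
\begin{equation*}
S_n(t)=\sum_{k=0}^n(-1)^k\binom{n}{k}2^{-k}C_{k+1}t^k
\end{equation*}
evaluated at $t=1,\,y,\,y^2$. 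The value $S_n(1)$ is supplied directly by~\eqref{i1hknxb}.

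Next, I would obtain a closed form for $S_n(t)$ via the Catalan generating function $C(w)=(1-\sqrt{1-4w})/(2w)$. Interchanging the two sums gives
\begin{equation*}
\sum_{n\geq0}S_n(t)z^n=\frac{1}{1-z}\cdot\frac{C(w)-1}{w},\qquad w=\frac{-tz}{2(1-z)},
\end{equation*}
and coefficient extraction in $z^n$ produces $S_n(t)$ in terms of $\sqrt{1+(2t-1)z/(1-z)}$. Substituting $s=\sqrt{1-2ty}$ (and similarly in the $y^2$ integral) rationalises each integrand. The parity dichotomy in the final answer then emerges because the radical expansion contributes integer powers of the auxiliary variable only for even $n$, and the odd-$n$ contributions cancel against the factor $1/(1-y^2)$ via the removable zero of the numerator at $y=1$ (controlled by the known value of $S_n(1)$).

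The main obstacle is precisely this second step: extracting a workable explicit form of $S_n(t)$ from the two-variable generating identity and then executing the square-root integrations in a way that tracks the parity. A cleaner alternative, which I would attempt first, is to decompose
\begin{equation*}
H_{k+2}-O_{k+1}=(H_k-O_k)+\frac{1}{k+1}+\frac{1}{k+2}-\frac{1}{2k+1},
\end{equation*}
so that the target splits into four sums: the first is the Catalan analogue of~\eqref{ey4kpm3}, provable by the same integral technique but with the Catalan sequence replacing central binomial coefficients, while each of the three reciprocal sums reduces, via $1/(k+a)=\int_0^1 t^{k+a-1}\,dt$, to an integral of the closed-form expression already given by~\eqref{i1hknxb}. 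Combining and simplifying these four contributions then yields the stated closed form, with the indicator $[n\text{ even}]$ arising naturally from~\eqref{i1hknxb}.
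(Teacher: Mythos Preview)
The paper does not actually prove this lemma; it is stated with a citation to~\cite[Section 13]{Adegoke0} and used as input. So there is no in-paper argument to compare against, and your proposal must stand on its own.

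As written, it does not. Your ``cleaner alternative'' claims that each of the three reciprocal pieces $\tfrac{1}{k+1}$, $\tfrac{1}{k+2}$, $\tfrac{1}{2k+1}$ can be handled by writing $1/(k+a)=\int_0^1 t^{k+a-1}\,dt$ and then invoking~\eqref{i1hknxb}. But after swapping sum and integral you get
\[
\int_0^1 t^{a-1}\sum_{k=0}^n(-1)^k\binom{n}{k}2^{-k}C_{k+1}\,t^{k}\,dt=\int_0^1 t^{a-1}S_n(t)\,dt,
\]
which requires exactly the closed form of $S_n(t)$ that you flagged as the main obstacle in your first approach. Identity~\eqref{i1hknxb} supplies only the single value $S_n(1)$, not $S_n(t)$ as a function, so it cannot evaluate these integrals. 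The alternative route therefore circles back to the unresolved step rather than bypassing it.

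The remaining piece, the ``Catalan analogue of~\eqref{ey4kpm3}'' with $(H_k-O_k)$ and $C_{k+1}$ in place of $\binom{2k}{k}$, is itself a nontrivial identity; note that~\eqref{ey4kpm3} is also only cited here, not proved, so ``the same integral technique'' is not something available from the present paper. You would need to supply that argument separately, and it is not obviously easier than the original statement. In short, neither branch of your plan is complete, and the key missing ingredient in both is a usable closed form (or at least an integrable form) for $S_n(t)$.
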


\begin{proposition}
If $n$ is a non-negative integer, then
\begin{align}
\sum_{k = 0}^n {\sum_{j = 0}^k {( - 1)^j \binom{{n}}{j}2^{ - \left( {j - k} \right)} C_{n + j - k + 1} \left( {H_{n + j - k + 2}  - O_{n + j - k + 1} } \right)} }\nonumber\\  
=\begin{cases}
  C_{(n - 1)/2} H_{(n + 1)/2},&\text{if $n$ is odd;}  \\ 
 0 ,&\text{if $n$ is even.}\\ 
 \end{cases}
\end{align}
\end{proposition}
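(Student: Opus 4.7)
The plan is to read a binomial transform pair directly off the preceding Lemma (identity~\eqref{qhaw2rp}) and then invoke the specialization~\eqref{utju7bn} of Theorem~\ref{thm.qtnqdr9}. Specifically, set
$$s_k = 2^{-k}\,C_{k+1}\,(H_{k+2}-O_{k+1}).$$
Then~\eqref{qhaw2rp} is precisely the statement that $\sigma_n := \sum_{k=0}^n (-1)^k \binom{n}{k} s_k$ equals $2^{-n-1}C_{n/2}H_{(n+2)/2}$ when $n$ is even and $0$ when $n$ is odd. Thus $\{s_k,\sigma_k\}$ is a binomial transform pair, and the rest of the argument is purely mechanical.

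Substituting this $s_k$ into~\eqref{utju7bn} produces
$$\sum_{k=0}^n\sum_{j=0}^k (-1)^j \binom{n}{j}\,2^{-(n+j-k)}\,C_{n+j-k+1}\,(H_{n+j-k+2}-O_{n+j-k+1}) = \sigma_{n-1}.$$
Multiplying both sides by $2^n$ converts the factor $2^{-(n+j-k)}$ on the left into $2^{k-j}=2^{-(j-k)}$, matching the form in the statement, and turns the right-hand side into $2^n\sigma_{n-1}$. A short parity check closes the proof: if $n$ is odd then $n-1$ is even, so $2^n\sigma_{n-1} = 2^n \cdot 2^{-n} C_{(n-1)/2}H_{(n+1)/2} = C_{(n-1)/2} H_{(n+1)/2}$; if $n$ is even then $n-1$ is odd and $\sigma_{n-1}=0$.

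Since all the analytic content lives inside Lemma~\eqref{qhaw2rp} (which is already established) and Theorem~\ref{thm.qtnqdr9} supplies the double-sum machinery, no real obstacle remains. The only place one must be careful is the bookkeeping: the shift $n \mapsto n-1$ in $\sigma$ flips even/odd, and the rescaling by $2^n$ must be tracked so that the exponent on $2$ in the double sum comes out as $k-j$ exactly, with the $C_{(n-1)/2}$ and $H_{(n+1)/2}$ indices emerging from $\sigma_{n-1}$ in the odd-$n$ case.
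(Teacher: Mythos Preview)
Your proof is correct and follows exactly the paper's approach: the paper's proof is the single line ``Use~\eqref{qhaw2rp} in~\eqref{utju7bn},'' and you have spelled out precisely this substitution together with the $2^n$ rescaling and parity bookkeeping.
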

\begin{proof}
Use~\eqref{qhaw2rp} in~\eqref{utju7bn}.
\end{proof}

\begin{lemma}[{\cite[Section 13]{Adegoke0}}]
If $n$ is a non-negative integer, then
\begin{equation}\label{l6wv97g}
\sum_{k = 0}^n {( - 1)^k \binom{{n}}{k}\left( {2k + 1} \right)\frac{{C_k }}{{2^{2k} }}\frac{{O_{k + 1} }}{{k + 1}}}  = \left( {2n + 1} \right)\frac{{C_n }}{{2^{2n} }}\frac{{O_{n + 1} }}{{n + 1}}.
\end{equation}
\end{lemma}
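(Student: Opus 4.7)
Let $s_k := (2k+1) C_k O_{k+1}/((k+1) 2^{2k})$ denote the sequence whose self-inverse property is claimed by \eqref{l6wv97g}. My plan is to verify $\sum_{k=0}^n (-1)^k \binom{n}{k} s_k = s_n$ by combining the generating-function/integral machinery used elsewhere in the paper with Donaghey's self-inverse identity
$$\sum_{k=0}^n (-1)^k \binom{n}{k} \frac{C_{k+1}}{2^{2k}} = \frac{C_{n+1}}{2^{2n}},$$
already invoked in the Catalan subsection.

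First, I would simplify $s_k$ using the elementary identity $(2k+1) C_k = \binom{2k+1}{k} = \tfrac{1}{2}(k+2) C_{k+1}$ (which follows from Pascal and symmetry), obtaining the more tractable factorization
$$s_k = \frac{(k+2) O_{k+1}}{2(k+1)} \cdot \frac{C_{k+1}}{2^{2k}}.$$
Since $C_{k+1}/2^{2k}$ is self-inverse by Donaghey, the task reduces to controlling the slowly-varying coefficient $(k+2) O_{k+1}/(2(k+1))$ under binomial inversion. To that end I would use the integral representation $O_{k+1} = \int_0^1 (1-t^{2k+2})/(1-t^2)\,dt$, which converts the binomial transform into
$$\sum_{k=0}^n (-1)^k \binom{n}{k} s_k = \int_0^1 \frac{dt}{1-t^2}\sum_{k=0}^n (-1)^k \binom{n}{k} \frac{(k+2)\bigl(1-t^{2k+2}\bigr)}{2(k+1)} \cdot \frac{C_{k+1}}{2^{2k}}.$$
The inner sum now splits into a $t$-independent Donaghey contribution and a $t$-dependent sum whose kernel is $C_{k+1}(t^2)^k/2^{2k}$; the latter admits a closed form via the Catalan generating function $C(u) = (1-\sqrt{1-4u})/(2u)$ evaluated at $u = t^2/4$.

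The principal obstacle is the final reassembly: after evaluating the two inner pieces and integrating against $dt/(1-t^2)$, one must recognize the result as $s_n$, which requires carefully tracking the emergence of the harmonic factor $O_{n+1}$ from integrating a combination of $\sqrt{1-t^2}$ and logarithmic antiderivatives. A slicker but more mechanical alternative—and a reliable fallback if the generating-function bookkeeping becomes unwieldy—is to apply Zeilberger's algorithm: both the left-hand and right-hand sides of \eqref{l6wv97g} are $P$-recursive in $n$, and one checks that they satisfy a common first-order recurrence and agree at $n = 0$ (where both sides equal $1$), thereby establishing the identity inductively.
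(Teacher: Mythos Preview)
The paper does not prove this lemma; it is quoted verbatim from \cite[Section 13]{Adegoke0} and used only as input for the proposition that follows. There is therefore no in-paper argument to compare your plan against.

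On the plan itself: the rewriting $(2k+1)C_k=\tfrac12(k+2)C_{k+1}$ and the integral representation $O_{k+1}=\int_0^1(1-t^{2k+2})/(1-t^2)\,dt$ are both correct and natural starting points. Two of your steps, however, are genuine gaps rather than routine bookkeeping. First, after you split on $1-t^{2k+2}$, the ``$t$-independent Donaghey contribution'' still carries the prefactor $(k+2)/(2(k+1))$, so it is \emph{not} Donaghey's self-inverse sum; writing $(k+2)/(2(k+1))=\tfrac12+\tfrac{1}{2(k+1)}$ peels off a Donaghey piece but leaves $\sum_{k}(-1)^k\binom{n}{k}C_{k+1}/((k+1)2^{2k})$ still to be evaluated. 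Second, and more seriously, the Catalan generating function $C(u)=(1-\sqrt{1-4u})/(2u)$ closes \emph{infinite} series $\sum_{k\ge0}C_k u^k$; it does not by itself evaluate the \emph{finite} binomial sum $\sum_{k=0}^n(-1)^k\binom{n}{k}\tfrac{(k+2)}{2(k+1)}\tfrac{C_{k+1}}{2^{2k}}t^{2k}$ that actually sits inside your integral. Some further device (an Euler transform, another integral layer, or a different decomposition) is required there, and you have not indicated one. Finally, your Zeilberger fallback needs sharpening: because the summand contains $O_{k+1}$, it is not hypergeometric (the ratio $O_{k+2}/O_{k+1}$ is not rational in $k$), so classical Zeilberger does not apply; you need holonomic creative telescoping, and the resulting homogeneous recurrence will be of order at least two, so verifying agreement only at $n=0$ is insufficient.
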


\begin{proposition}
If $n$ is a positive integer and $m$ is a non-negative integer, then
\begin{align}
\sum_{k = 0}^n {\sum_{j = 0}^k {( - 1)^j \binom{{n}}{j}\left( {2\left( {m + n - k + j} \right) + 1} \right)\frac{{C_{m + n - k + j} }}{{2^{2\left( {m + n - k + j} \right)} }}\frac{{O_{m + n - k + j + 1} }}{{m + n - k + j + 1}}} }\nonumber\\
 = \sum_{k = 0}^m {( - 1)^k \binom{{m}}{k}\left( {2(k + n) - 1} \right)\frac{{C_{k + n - 1} }}{{2^{2(k + n - 1)} }}\frac{{O_{k + n} }}{{k + n}}} .
\end{align}
\end{proposition}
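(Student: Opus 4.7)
The plan is to interpret Lemma \eqref{l6wv97g} as saying that the sequence
\[
s_k := (2k+1)\,\frac{C_k}{2^{2k}}\,\frac{O_{k+1}}{k+1}
\]
is self-inverse, in the sense of equation \eqref{self}. Indeed, Lemma \eqref{l6wv97g} reads $\sum_{k=0}^{n}(-1)^{k}\binom{n}{k}s_{k}=s_{n}$, so $\{s_n,\sigma_n\}$ with $\sigma_n=s_n$ is a binomial transform pair.

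Next, I would feed this pair directly into Theorem~\ref{thm.qtnqdr9}, whose general form states
\[
\sum_{k=0}^{n}\sum_{j=0}^{k}(-1)^{j}\binom{n}{j}\,s_{m+n-k+j}\;=\;\sum_{k=0}^{m}(-1)^{k}\binom{m}{k}\,\sigma_{n+k-1}.
\]
Substituting the explicit formula for $s_{m+n-k+j}$ on the left gives precisely the left-hand side of the proposition. On the right, substituting $\sigma_{n+k-1}=s_{n+k-1}$ yields
\[
\sum_{k=0}^{m}(-1)^{k}\binom{m}{k}\bigl(2(n+k-1)+1\bigr)\frac{C_{n+k-1}}{2^{2(n+k-1)}}\,\frac{O_{n+k}}{n+k},
\]
which matches the right-hand side of the proposition after rewriting $2(n+k-1)+1 = 2(k+n)-1$.

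Because both the self-inverse property and Theorem~\ref{thm.qtnqdr9} are already in place, there is no serious obstacle: the entire proof is a direct specialization and a check that the subscripts in $s_{m+n-k+j}$ and $\sigma_{n+k-1}$ align with the displayed terms in the proposition. The only point that needs a brief comment is the shift $n+k-1$ versus $n+k$ in the Catalan/harmonic indices on the right-hand side, which follows immediately from the general statement of Theorem~\ref{thm.qtnqdr9} (the $\sigma$-index is $n+k-1$, not $n+k$), and this indeed reproduces the factor $(2(k+n)-1)C_{k+n-1}/2^{2(k+n-1)}\cdot O_{k+n}/(k+n)$ appearing in the stated identity.
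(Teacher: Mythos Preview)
Your proof is correct and follows exactly the paper's own approach: the paper's one-line proof is ``Use~\eqref{l6wv97g} in Theorem~\ref{thm.qtnqdr9},'' which is precisely what you do by recognizing the sequence $s_k=(2k+1)\,2^{-2k}C_k\,O_{k+1}/(k+1)$ as self-inverse and then specializing the general identity. Your check of the index shift $\sigma_{n+k-1}=s_{n+k-1}$ against the displayed right-hand side is accurate.
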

\begin{proof}
Use \eqref{l6wv97g} in Theorem~\ref{thm.qtnqdr9}.
\end{proof}

\subsection{Identities involving Stirling numbers of the second kind}

\begin{lemma}\label{lem.pbovlru}
If $n$, $r$, and $s$ are non-negative integers then
\begin{align}
\sum_{k = 0}^n {( - 1)^k \binom{{n}}{k}k^r }  &= ( - 1)^n n!\braces{{ r}}{n},\label{y7bhb70}\\
\sum_{k = 0}^n {( - 1)^k \binom{{n}}{k}\left( {k + 1} \right)^{r - 1} }  &= ( - 1)^n n!\braces{{ r}}{{n + 1}},\quad r\ge 1\label{prwewl3},
\end{align}
and
\begin{equation}\label{yxdoz00}
\sum_{k = 0}^n {( - 1)^k \binom{{n}}{k}\braces{{ k + r + 1}}{{s + 1}}\binom{{k + r}}{r}^{ - 1} }  = ( - 1)^n \braces{{ n + r}}{s}\binom{{n + r}}{r}^{ - 1},\quad s\ge r .
\end{equation}
\end{lemma}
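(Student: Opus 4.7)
The plan is to treat the three identities separately, proving (1) directly from the classical inversion formula for Stirling numbers of the second kind, (2) as an index-shift corollary of (1), and (3) by binomial inversion combined with a standard Stirling recurrence. Identities (1) and (2) should be routine, while (3) is the main content.

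For (1), I would invoke the explicit expansion $\braces{r}{n}=\tfrac{1}{n!}\sum_{k=0}^{n}(-1)^{n-k}\binom{n}{k}k^r$; multiplying by $n!$ and factoring out $(-1)^n$ from each summand gives the claim at once. For (2), the plan is to shift $j=k+1$ and use $\binom{n}{j-1}=\tfrac{j}{n+1}\binom{n+1}{j}$ to rewrite the left-hand side as $-\tfrac{1}{n+1}\sum_{j=1}^{n+1}(-1)^j\binom{n+1}{j}j^r$. The hypothesis $r\geq 1$ annihilates the $j=0$ term, so extending the sum and applying (1) at level $n+1$ produces $(-1)^n n!\braces{r}{n+1}$.

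For (3), the approach is to apply binomial inversion to the desired identity, reducing it to the equivalent statement
\begin{equation*}
\braces{n+r+1}{s+1}\binom{n+r}{r}^{-1}=\sum_{k=0}^{n}\binom{n}{k}\braces{k+r}{s}\binom{k+r}{r}^{-1}.
\end{equation*}
Multiplying through by $\binom{n+r}{r}$ and using the elementary identity $\binom{n}{k}\binom{n+r}{r}\binom{k+r}{r}^{-1}=\binom{n+r}{k+r}$, the right-hand side becomes $\sum_{j=r}^{n+r}\binom{n+r}{j}\braces{j}{s}$. The classical Stirling identity $\braces{m+1}{s+1}=\sum_{j=0}^{m}\binom{m}{j}\braces{j}{s}$, applied with $m=n+r$, supplies the full sum $\sum_{j=0}^{n+r}\binom{n+r}{j}\braces{j}{s}=\braces{n+r+1}{s+1}$. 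The hypothesis $s\geq r$ is precisely what makes the missing tail $\sum_{j=0}^{r-1}\binom{n+r}{j}\braces{j}{s}$ vanish, since $\braces{j}{s}=0$ whenever $j<s$.

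The main obstacle lies in (3): the recognition that passing through binomial inversion collapses the problem onto the classical identity $\braces{m+1}{s+1}=\sum_{j}\binom{m}{j}\braces{j}{s}$ is not immediate, and one must pinpoint that $s\geq r$ is exactly the hypothesis forcing the truncated tail of Stirling numbers to vanish. Once the reduction is set up, the verification is a short computation.
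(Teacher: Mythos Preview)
Your argument is correct and self-contained. For the first identity both you and the paper simply invoke the explicit representation of Stirling numbers of the second kind. For the remaining two the paper does not give a proof at all but merely cites \cite[Section~7]{Adegoke0}, whereas you supply direct derivations: the index shift $j=k+1$ together with $\binom{n}{j-1}=\tfrac{j}{n+1}\binom{n+1}{j}$ cleanly reduces the second identity to the first, and for the third your passage through binomial inversion, the simplification $\binom{n}{k}\binom{n+r}{r}\binom{k+r}{r}^{-1}=\binom{n+r}{k+r}$, and the classical recurrence $\braces{m+1}{s+1}=\sum_{j}\binom{m}{j}\braces{j}{s}$ is correct, with the hypothesis $s\ge r$ used exactly where it is needed to annihilate the tail $0\le j<r$. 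Your write-up thus gives strictly more than the paper's own proof section does.
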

\begin{proof}
Identity~\eqref{y7bhb70} is the explicit representation of Stirling numbers of the second kind. Identities~\eqref{prwewl3} and~\eqref{yxdoz00} are derived in~\cite[Section 7]{Adegoke0}.
\end{proof}

\begin{proposition}
If $n$, $r$, and $s$ are non-negative integers then
\begin{align}
\sum_{k = 0}^n {\sum_{j = 0}^k {( - 1)^j \binom{{n}}{j}\left( {m + n - k + j} \right)^r } }  &= ( - 1)^{n - 1} \sum_{k = 0}^m {\binom{{m}}{k}\left( {n + k - 1} \right)!\braces{{ r}}{{n + k - 1}}} ,\\
\sum_{k = 0}^n {\sum_{j = 0}^k {( - 1)^j \binom{{n}}{j}\left( {m + n - k + j} \right)^{r - 1} } }  &= ( - 1)^{n - 1} \sum_{k = 0}^m {\binom{{m}}{k}\left( {n + k - 1} \right)!\braces{{ r}}{{n + k}}} ,\quad r\ge 1,
\end{align}
and
\begin{align}\label{mgi6qsx}
&\sum_{k = 0}^n {\sum_{j = 0}^k {( - 1)^j \binom{{n}}{j}\braces{{ m + n - k + j + r + 1}}{{s + 1}}\binom{{m + n - k + j + r}}{r}^{ - 1} } }\nonumber\\
&\qquad  = (-1)^{n-1}\sum_{k = 0}^m {( - 1)^k \binom{{m}}{k}\braces{{ n + k + r - 1}}{s}\binom{{n + k + r - 1}}{r}^{ - 1} }\quad s\ge r .
\end{align}
\end{proposition}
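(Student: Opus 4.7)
The plan is to recognize that each of the three identities is a direct application of Theorem~\ref{thm.qtnqdr9} to a binomial transform pair $\{s_k,\sigma_k\}$ that can be read off from the corresponding identity in Lemma~\ref{lem.pbovlru}. The whole proof should amount to three one-line substitutions together with a short sign calculation.

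For the first identity, I would take $s_k=k^r$. By~\eqref{y7bhb70} the binomial transform is then $\sigma_k=(-1)^k k!\,\braces{r}{k}$, so $\sigma_{n+k-1}=(-1)^{n+k-1}(n+k-1)!\,\braces{r}{n+k-1}$. Plugging this into Theorem~\ref{thm.qtnqdr9}, the two sign factors combine via $(-1)^k\cdot(-1)^{n+k-1}=(-1)^{n-1}$, and the right-hand side collapses to $(-1)^{n-1}\sum_{k=0}^m\binom{m}{k}(n+k-1)!\,\braces{r}{n+k-1}$, which is precisely the first claimed identity.

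For the second identity, I would take $s_k=(k+1)^{r-1}$, so that by~\eqref{prwewl3} the transform is $\sigma_k=(-1)^k k!\,\braces{r}{k+1}$, and in particular $\sigma_{n+k-1}=(-1)^{n+k-1}(n+k-1)!\,\braces{r}{n+k}$. The same substitution into Theorem~\ref{thm.qtnqdr9} and the same sign collapse yield the second identity. For the third identity, I would take $s_k=\braces{k+r+1}{s+1}\binom{k+r}{r}^{-1}$ with $\sigma_k=(-1)^k\braces{k+r}{s}\binom{k+r}{r}^{-1}$ directly from~\eqref{yxdoz00}; feeding this pair into Theorem~\ref{thm.qtnqdr9} and collecting signs in exactly the same way produces the third identity.

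The only non-routine step is bookkeeping: checking in each case that the shift $k\mapsto n+k-1$ inside $\sigma_{n+k-1}$ lines up with the arguments of $\braces{\cdot}{\cdot}$ and of the inverse binomial coefficient, and that the two powers of $-1$ collapse to the common prefactor $(-1)^{n-1}$. I do not anticipate any genuine obstacle; the proposition reduces to three mechanical invocations of Theorem~\ref{thm.qtnqdr9}.
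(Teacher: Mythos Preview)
Your proposal is correct and is exactly the paper's own argument: the paper's proof consists of the single sentence that the results follow from using the identities in Lemma~\ref{lem.pbovlru} in Theorem~\ref{thm.qtnqdr9}, with precisely the three binomial transform pairs you identify. One small bookkeeping caveat: with $s_k=(k+1)^{r-1}$ from~\eqref{prwewl3}, Theorem~\ref{thm.qtnqdr9} produces $(m+n-k+j+1)^{r-1}$ on the left-hand side rather than $(m+n-k+j)^{r-1}$, so the second displayed formula carries an off-by-one shift in the argument; this is a typographical slip in the stated proposition, not a defect in your method.
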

\begin{proof}
These results follow from the use of the identities in Lemma~\ref{lem.pbovlru} in Theorem~\ref{thm.qtnqdr9}.
\end{proof}


\begin{proposition}
If $n$ is a positive integer and $r$ and $s$ are non-negative integers then
\begin{align}
\sum_{k = 1}^n {\sum_{j = 0}^{k - 1} {( - 1)^j \binom{{n}}{j}\frac{{\left( {n - k + j} \right)^r }}{{k - j}}} }  &= ( - 1)^n (n - 1)!\braces{{ r}}{n} - ( - 1)^n n^{r - 1}\label{ih1affl} ,\\
\sum_{k = 1}^n {\sum_{j = 0}^{k - 1} {( - 1)^j \binom{{n}}{j}\frac{{\left( {n - k + j + 1} \right)^{r - 1} }}{{k - j}}} }  &= ( - 1)^n (n - 1)!\braces{{ r}}{{n + 1}} - ( - 1)^n \frac{{\left( {n + 1} \right)^{r - 1} }}{n},\quad r\ge 1\label{bjpvnnv},
\end{align}
and
\begin{align}
&\sum_{k = 1}^n {\sum_{j = 0}^{k - 1} {( - 1)^j \binom{{n}}{j}\braces{{ n - k + j + r + 1}}{{s + 1}}\binom{{n - k + j + r}}{r}^{ - 1} \frac{1}{{k - j}}} }\nonumber\\
&\qquad  = ( - 1)^{n - 1} \left( {s + 1} \right)\braces{{ n + r}}{{s + 1}}\binom{{n + r}}{r}^{ - 1},\quad s\ge r .
\end{align}
\end{proposition}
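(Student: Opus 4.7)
The plan is to apply Corollary~\ref{cor.cp9d6at} with $m = 0$ to each of the three binomial transform pairs already catalogued in Lemma~\ref{lem.pbovlru}. Dividing the particular case~\eqref{msdi8n1} through by $n$ gives the master formula
\[
\sum_{k=1}^n \sum_{j=0}^{k-1} (-1)^j \binom{n}{j}\frac{s_{n-k+j}}{k-j} = \frac{\sigma_n - (-1)^n s_n}{n},
\]
whose shape matches each of the three target sums.

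For \eqref{ih1affl}, I would take $s_k = k^r$; equation~\eqref{y7bhb70} then supplies $\sigma_n = (-1)^n n!\braces{r}{n}$, and the right-hand side of the master formula simplifies, via $n! = n\cdot(n-1)!$ and $n^r = n\cdot n^{r-1}$, to exactly $(-1)^n(n-1)!\braces{r}{n} - (-1)^n n^{r-1}$. Identity~\eqref{bjpvnnv} follows from the same template with $s_k = (k+1)^{r-1}$ and $\sigma_n = (-1)^n n!\braces{r}{n+1}$ read off from~\eqref{prwewl3}.

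The third identity is the only one requiring a non-trivial algebraic step. Here the natural choice is $s_k = \braces{k+r+1}{s+1}\binom{k+r}{r}^{-1}$, which by~\eqref{yxdoz00} pairs with $\sigma_n = (-1)^n\braces{n+r}{s}\binom{n+r}{r}^{-1}$. Substituting into the master formula leaves the factor $\braces{n+r}{s} - \braces{n+r+1}{s+1}$; to finish, I would invoke the standard Stirling-number recurrence $\braces{m+1}{k} = k\braces{m}{k} + \braces{m}{k-1}$ with $m = n+r$ and $k = s+1$, collapsing the difference to $-(s+1)\braces{n+r}{s+1}$ and producing the stated right-hand side.

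There is no real obstacle. The only non-mechanical observation is recognising that the Stirling recurrence effects exactly the cancellation needed in the third case; otherwise the argument is a direct look-up in Lemma~\ref{lem.pbovlru} followed by a single application of the master formula, so all three identities come out together.
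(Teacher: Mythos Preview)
Your approach is exactly the paper's: it too simply instructs one to use the identities of Lemma~\ref{lem.pbovlru} in~\eqref{msdi8n1}, and your spelling-out of the Stirling recurrence step is the natural way to finish the third case.

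One caution on that third identity: your master formula (dividing~\eqref{msdi8n1} by $n$) actually yields
\[
\frac{(-1)^{n-1}(s+1)}{n}\,\braces{n+r}{s+1}\binom{n+r}{r}^{-1},
\]
i.e., the printed right-hand side divided by $n$. A direct check at $n=2$, $r=s=0$ gives the left side equal to $-\tfrac12$, not $-1$. So either the paper's statement has dropped a factor of $1/n$, or the left side was meant to retain the $n\binom{n}{j}$ from~\eqref{msdi8n1}; in any case your algebra is correct but does not quite ``produce the stated right-hand side'' as written.
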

\begin{proof}
Use the identities stated in Lemma~\ref{lem.pbovlru} in~\eqref{msdi8n1}.
\end{proof}

In particular, setting $r=n-1$ in~\eqref{ih1affl} gives
\begin{equation}
\sum_{k = 1}^n {\sum_{j = 0}^{k - 1} {( - 1)^j \binom{{n}}{j}\frac{{\left( {n - k + j} \right)^{n - 1} }}{{k - j}}} }  = ( - 1)^{n + 1} n^{n - 2} ,\quad n\in\mathbb Z^+,
\end{equation}
while setting $r=n$ in~\eqref{bjpvnnv} yields
\begin{equation}
\sum_{k = 1}^n {\sum_{j = 0}^{k - 1} {( - 1)^j \binom{{n}}{j}\frac{{\left( {n - k + j + 1} \right)^{n - 1} }}{{k - j}}} }  = ( - 1)^{n + 1} \frac{{\left( {n + 1} \right)^{n - 1} }}{n},\quad n\in\mathbb Z^+.
\end{equation}

\begin{lemma}
If $n$ is a non-negative integer, $m$ is a non-negative integer and $r$ is a complex number that is not a negative integer, then~\cite[Equation (10.45)]{Boya4}:
\begin{equation}\label{wk165o9}
\sum_{k = 0}^n {( - 1)^k \binom{{n}}{k}\binom{{k + r}}{k}^{ - 1} k^m }  = \binom{{ - r - 1}}{n}^{ - 1} \sum_{k = 0}^m {\binom{{ - r}}{{n - k}}\braces{{ m}}{k}k!} 
\end{equation}
\end{lemma}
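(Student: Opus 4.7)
The plan is to reduce the left-hand side to a single Gamma-function expression by expanding the power $k^m$ in falling factorials via Stirling numbers of the second kind and then using the Beta-function representation of the reciprocal binomial coefficient. I start with the standard identity $k^m=\sum_{j=0}^m \braces{m}{j}\,j!\binom{k}{j}$, substitute into the left-hand side, and interchange the two summations. Applying $\binom{n}{k}\binom{k}{j}=\binom{n}{j}\binom{n-j}{k-j}$ and shifting the inner index $k\mapsto k+j$ turns the left-hand side into
\begin{equation*}
\sum_{j=0}^m\braces{m}{j}\,j!\,(-1)^j\binom{n}{j}\sum_{k=0}^{n-j}(-1)^k\binom{n-j}{k}\binom{k+j+r}{k+j}^{-1}.
\end{equation*}

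The inner sum can now be evaluated in closed form via the integral representation
\begin{equation*}
\binom{a+r}{a}^{-1}=r\int_0^1 x^a(1-x)^{r-1}\,dx,
\end{equation*}
valid by analytic continuation for every complex $r$ outside the negative integers. Pulling the integral outside the sum and using the binomial theorem $\sum_k(-1)^k\binom{n-j}{k}x^k=(1-x)^{n-j}$ collapses the inner sum to
\begin{equation*}
r\int_0^1 x^j(1-x)^{n-j+r-1}\,dx=\frac{r\,j!\,\Gamma(n-j+r)}{\Gamma(n+r+1)}.
\end{equation*}

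To conclude I would compare this with the right-hand side after rewriting its binomial coefficients via the reflection formulas $\binom{-r-1}{n}=(-1)^n\binom{r+n}{n}$ and $\binom{-r}{n-k}=(-1)^{n-k}\binom{r+n-k-1}{n-k}$, which turn the right-hand side into
\begin{equation*}
\frac{n!\,r}{\Gamma(n+r+1)}\sum_{k=0}^m(-1)^k\braces{m}{k}\,k!\,\frac{\Gamma(r+n-k)}{(n-k)!}.
\end{equation*}
A direct Gamma-function simplification of the $j$-th term of my reduced left-hand side matches this expression exactly (with $j=k$), proving the identity. The step I expect to be the main obstacle is the sign and Gamma-function bookkeeping when reconciling the two forms of the right-hand side; once the reflection formulas above are applied the match becomes mechanical. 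An alternative, less direct approach would be induction on $m$ using the Stirling recurrence $\braces{m+1}{k}=k\braces{m}{k}+\braces{m}{k-1}$ together with $k^{m+1}=k\cdot k^m$, but the Beta-integral route is more transparent.
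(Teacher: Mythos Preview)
The paper does not prove this lemma at all; it is simply quoted from Boyadzhiev's book~\cite[Equation (10.45)]{Boya4}, so there is no ``paper's own proof'' to compare against. Your argument is a correct, self-contained derivation: the Stirling expansion $k^m=\sum_{j}\braces{m}{j}j!\binom{k}{j}$, the trinomial-revision identity $\binom{n}{k}\binom{k}{j}=\binom{n}{j}\binom{n-j}{k-j}$, and the Beta-integral evaluation of the resulting inner sum are exactly the natural route, and your Gamma/reflection bookkeeping checks out termwise (indeed $(-1)^j j!\binom{n}{j}\cdot r\,j!\,\Gamma(n-j+r)/\Gamma(n+r+1)=r\,n!\,(-1)^j j!\,\Gamma(n-j+r)/[(n-j)!\,\Gamma(n+r+1)]$, matching the rewritten right-hand side). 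The only point to state a little more carefully is that the integral representation $\binom{a+r}{a}^{-1}=r\int_0^1 x^a(1-x)^{r-1}\,dx$ converges only for $\Re(r)>0$; since both sides of the final identity are rational functions of $r$ for fixed $m,n$, analytic continuation (which you invoke) then extends the result to all complex $r\notin\mathbb{Z}^{-}$.
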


\begin{proposition}
If $n$ is a positive integer, $m$ is a non-negative integer and $r$ is a complex number that is not a negative integer, then
\begin{equation}
\sum_{k = 0}^n {\sum_{j = 0}^k {( - 1)^j \binom{{n}}{j}\binom{{n + j - k + r}}{r}^{ - 1} \left( {n + j - k} \right)^m } }  = \binom{{ - r - 1}}{{n - 1}}^{ - 1} \sum_{k = 0}^m {\binom{{ - r}}{{n - k - 1}}\braces{{ m}}{k}k!} 
\end{equation}
\end{proposition}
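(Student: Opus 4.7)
The plan is to treat equation~\eqref{wk165o9} as the defining relation of a binomial transform pair and then immediately invoke the particular case~\eqref{utju7bn} of Theorem~\ref{thm.qtnqdr9}. Concretely, set
\begin{equation*}
s_k = \binom{k + r}{k}^{-1} k^m \qquad \text{and} \qquad \sigma_k = \binom{-r - 1}{k}^{-1} \sum_{i = 0}^{m} \binom{-r}{k - i} \braces{m}{i} i! .
\end{equation*}
Equation~\eqref{wk165o9} asserts exactly that $\sigma_k = \sum_{j=0}^k (-1)^j \binom{k}{j} s_j$, so $\{s_k, \sigma_k\}$ forms a binomial transform pair in the sense used throughout the paper.

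Next I would substitute this pair into~\eqref{utju7bn}, which reads
\begin{equation*}
\sum_{k = 0}^n \sum_{j = 0}^k (-1)^j \binom{n}{j} s_{n + j - k} = \sigma_{n - 1}.
\end{equation*}
The left-hand side immediately becomes $\sum_{k=0}^n \sum_{j=0}^k (-1)^j \binom{n}{j} \binom{n + j - k + r}{n + j - k}^{-1} (n + j - k)^m$, and since $\binom{n + j - k + r}{n + j - k} = \binom{n + j - k + r}{r}$ the inverse binomial coefficient matches the one stated in the proposition. The right-hand side is $\sigma_{n-1} = \binom{-r-1}{n-1}^{-1} \sum_{k=0}^m \binom{-r}{n-1-k} \braces{m}{k} k!$, which is precisely the claimed expression.

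There is essentially no obstacle beyond careful index bookkeeping: the statement is obtained by a one-line application of the general machinery already established, and the only thing worth verifying explicitly is that $s_k$ is well-defined for $r \notin \mathbb{Z}^-$ (so that $\binom{k+r}{k}$ never vanishes) and that the index shift $n \mapsto n - 1$ in $\sigma$ is reflected correctly in both the leading inverse binomial $\binom{-r-1}{n-1}^{-1}$ and the inner summation index $\binom{-r}{n - k - 1}$.
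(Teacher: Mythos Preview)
Your proposal is correct and follows exactly the paper's approach: identify the binomial transform pair from~\eqref{wk165o9} and plug it into the particular case~\eqref{utju7bn} of Theorem~\ref{thm.qtnqdr9}. The only extra detail you spell out---the symmetry $\binom{n+j-k+r}{n+j-k}=\binom{n+j-k+r}{r}$ and the well-definedness for $r\notin\mathbb{Z}^{-}$---is routine bookkeeping that the paper leaves implicit.
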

\begin{proof}
Use~\eqref{wk165o9} in Theorem~\ref{thm.qtnqdr9}.
\end{proof}

\subsection{Identities involving $m$-step numbers}

\begin{lemma}[See~\cite{Adegoke18c}]\label{lem.fwavfkb}
Let $W_k^{(m)}$ be a Fibonacci or Lucas $m$-step number. Then
\begin{equation}
\sum_{k = 0}^n {( - 1)^k \binom{{n}}{k}2^k W_{mk}^{(m)} }  = ( - 1)^n W_{(m + 1)n}^{(m)} 
\end{equation}
and
\begin{equation}
\sum_{k = 0}^n {( - 1)^k \binom{{n}}{k}2^{ - k} W_k^{(m)} }  = 2^{ - n} W_{ - mn}^{(m)} .
\end{equation}
\end{lemma}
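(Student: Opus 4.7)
The plan is to reduce both identities to a single algebraic identity satisfied by the roots of the characteristic polynomial of the $m$-step recurrence. Specifically, the $m$-step Fibonacci/Lucas numbers admit a Binet-type representation
\begin{equation*}
W_k^{(m)} = \sum_{i=1}^m c_i \alpha_i^{k},
\end{equation*}
where $\alpha_1,\dots,\alpha_m$ are the roots of $x^m = x^{m-1} + x^{m-2} + \cdots + 1$, and the constants $c_i$ are fixed by the chosen initial conditions (Fibonacci versus Lucas flavour). The representation also extends consistently to negative indices via the recurrence. The characteristic equation can be rewritten as $(x-1)x^m = x^m - 1$, equivalently
\begin{equation*}
\alpha_i^{m+1} = 2\alpha_i^{m} - 1, \qquad \text{i.e.,}\qquad 1 - 2\alpha_i^{m} = -\alpha_i^{m+1}, \qquad 2 - \alpha_i = \alpha_i^{-m}.
\end{equation*}

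For the first identity, I would substitute the Binet formula, interchange the finite sums, and apply the binomial theorem in the index $k$:
\begin{equation*}
\sum_{k=0}^n (-1)^k \binom{n}{k} 2^{k} W_{mk}^{(m)}
= \sum_{i=1}^m c_i \sum_{k=0}^n \binom{n}{k} \bigl(-2\alpha_i^{m}\bigr)^k
= \sum_{i=1}^m c_i \bigl(1 - 2\alpha_i^{m}\bigr)^n.
\end{equation*}
Applying $1 - 2\alpha_i^{m} = -\alpha_i^{m+1}$ collapses this to $(-1)^n \sum_i c_i \alpha_i^{(m+1)n} = (-1)^n W_{(m+1)n}^{(m)}$, as required.

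The second identity would follow in exactly the same fashion:
\begin{equation*}
\sum_{k=0}^n (-1)^k \binom{n}{k} 2^{-k} W_{k}^{(m)}
= \sum_{i=1}^m c_i \bigl(1 - \tfrac{\alpha_i}{2}\bigr)^n
= 2^{-n} \sum_{i=1}^m c_i (2-\alpha_i)^n,
\end{equation*}
and the algebraic relation $2 - \alpha_i = \alpha_i^{-m}$ rewrites the right-hand side as $2^{-n}\sum_i c_i \alpha_i^{-mn} = 2^{-n} W_{-mn}^{(m)}$.

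The only real obstacle is legitimacy of the Binet-type expansion, which needs the characteristic polynomial to have simple roots (a standard fact, since $x^{m+1} - 2x^m + 1$ and its derivative share no root) and the extension of $W^{(m)}$ to negative indices to agree with $\alpha_i^{-k}$. If one wants to sidestep Binet entirely, an equivalent route is to verify both identities as polynomial identities in a formal variable $x$ satisfying $x^{m+1} = 2x^m - 1$ and then specialize; this avoids any analytic issue with multiplicity of roots and is how the cited source presumably proceeds.
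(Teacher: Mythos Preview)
The paper does not supply its own proof of this lemma; it is simply quoted with a reference to~\cite{Adegoke18c} and then used as input to Theorem~\ref{thm.qtnqdr9}. So there is no in-paper argument to compare against.

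Your argument is correct and is in fact the natural one. The key algebraic step, rewriting the characteristic equation $x^m=x^{m-1}+\cdots+1$ as $x^{m+1}=2x^m-1$, immediately yields both relations $1-2\alpha_i^{m}=-\alpha_i^{m+1}$ and $2-\alpha_i=\alpha_i^{-m}$, after which each identity is a one-line application of the binomial theorem inside the Binet expansion. Two small remarks: (i) the factorisation $x^{m+1}-2x^m+1=(x-1)\bigl(x^m-x^{m-1}-\cdots-1\bigr)$ shows that $x=1$ is not a root of the actual characteristic polynomial, and the remaining roots are simple (as you note), so the Binet form with distinct $\alpha_i$ is legitimate; (ii) since the constant term of $x^m-x^{m-1}-\cdots-1$ is $-1\ne 0$, all $\alpha_i$ are nonzero and the extension of the Binet formula to negative indices is automatic, which justifies the appearance of $W_{-mn}^{(m)}$.
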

The identities stated in Propositions~\ref{prop.ft6yauz} and~\ref{prop.up76gra} are immediate consequences of Theorem~\ref{thm.qtnqdr9} and Lemma~\ref{lem.fwavfkb}.
\begin{proposition}\label{prop.ft6yauz}
If $n$ is a non-negative integer, then
\begin{equation}
\sum_{k = 0}^n {\sum_{j = 0}^k {( - 1)^j \binom{{n}}{j}2^{n + j - k} W_{m(n + j - k)}^{(m)} } }  = ( - 1)^{n - 1} W_{(m + 1)(n - 1)}^{(m)} .
\end{equation}
In particular,
\begin{equation}
\sum_{k = 0}^n {\sum_{j = 0}^k {( - 1)^j \binom{{n}}{j}2^{n + j - k} G_{2(n + j - k)} } }  = ( - 1)^{n - 1} G_{3(n - 1)} 
\end{equation}
and
\begin{equation}
\sum_{k = 0}^n {\sum_{j = 0}^k {( - 1)^j \binom{{n}}{j}2^{n + j - k} T_{3(n + j - k)} } }  = ( - 1)^{n - 1} T_{4(n - 1)} .
\end{equation}
\end{proposition}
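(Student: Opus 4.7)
The plan is to read off a binomial transform pair directly from Lemma~\ref{lem.fwavfkb} and then feed it into identity \eqref{utju7bn} of Theorem~\ref{thm.qtnqdr9}. Concretely, set
\begin{equation*}
s_k = 2^k W_{mk}^{(m)}.
\end{equation*}
The first identity of Lemma~\ref{lem.fwavfkb} states precisely that
\begin{equation*}
\sum_{k=0}^n (-1)^k \binom{n}{k} s_k = (-1)^n W_{(m+1)n}^{(m)},
\end{equation*}
so $\{s_n,\sigma_n\}$ with $\sigma_n = (-1)^n W_{(m+1)n}^{(m)}$ is a binomial transform pair.

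Next I would substitute this pair into \eqref{utju7bn}. Since $s_{n+j-k} = 2^{n+j-k} W_{m(n+j-k)}^{(m)}$ and $\sigma_{n-1} = (-1)^{n-1} W_{(m+1)(n-1)}^{(m)}$, the general identity
\begin{equation*}
\sum_{k=0}^n \sum_{j=0}^k (-1)^j \binom{n}{j} s_{n+j-k} = \sigma_{n-1}
\end{equation*}
becomes exactly the claimed formula. No further manipulation is required.

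For the two special cases, one simply specializes $m$: taking $m=2$ and writing $W^{(2)}_r = G_r$ (the Gibonacci case) yields $(m+1)(n-1) = 3(n-1)$ and $m(n+j-k) = 2(n+j-k)$, giving the identity with $G_{2(n+j-k)}$ and $G_{3(n-1)}$. Taking $m=3$ and writing $W^{(3)}_r = T_r$ (the Tribonacci case) yields $(m+1)(n-1) = 4(n-1)$ and $m(n+j-k) = 3(n+j-k)$, giving the $T$-version.

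There is no real obstacle here; the entire proposition is a direct corollary of Theorem~\ref{thm.qtnqdr9} once the correct pair $\{s_n,\sigma_n\}$ is recognized from Lemma~\ref{lem.fwavfkb}. The only thing to double-check is the sign bookkeeping: the factor $(-1)^n$ inside $\sigma_n$ shifts to $(-1)^{n-1}$ at index $n-1$, which matches the sign on the right-hand side of the claim.
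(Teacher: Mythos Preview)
Your proposal is correct and follows exactly the approach the paper uses: the paper states that the proposition is an immediate consequence of Theorem~\ref{thm.qtnqdr9} and Lemma~\ref{lem.fwavfkb}, and you have simply spelled out the details of that deduction via~\eqref{utju7bn}. The specializations to $m=2$ (Gibonacci) and $m=3$ (Tribonacci) are also handled correctly.
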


\begin{proposition}\label{prop.up76gra}
If $n$ is a non-negative integer, then
\begin{equation}
\sum_{k = 0}^n {\sum_{j = 0}^k {( - 1)^j \binom{{n}}{j}2^{k - j} W_{n + j - k}^{(m)} } }  = 2W_{ - m(n - 1)}^{(m)} .
\end{equation}
\end{proposition}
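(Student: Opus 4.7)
The plan is to recognize the second identity in Lemma~\ref{lem.fwavfkb} as defining a binomial transform pair and then feed that pair into the specialization~\eqref{utju7bn} of Theorem~\ref{thm.qtnqdr9}, exactly as was done for the closely related Proposition~\ref{prop.ft6yauz}.

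First I would set
\begin{equation*}
s_k = 2^{-k} W_k^{(m)}, \qquad \sigma_k = 2^{-k} W_{-mk}^{(m)}.
\end{equation*}
The relation
\begin{equation*}
\sum_{k=0}^n (-1)^k \binom{n}{k} 2^{-k} W_k^{(m)} = 2^{-n} W_{-mn}^{(m)}
\end{equation*}
supplied by Lemma~\ref{lem.fwavfkb} then says exactly that $\sigma_n = \sum_{k=0}^n \binom{n}{k}(-1)^k s_k$, so $\{s_k,\sigma_k\}$ forms a binomial transform pair.

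Next I would substitute this pair into~\eqref{utju7bn}. Since $s_{n+j-k} = 2^{-(n+j-k)} W_{n+j-k}^{(m)}$ and $\sigma_{n-1} = 2^{-(n-1)} W_{-m(n-1)}^{(m)}$, identity~\eqref{utju7bn} becomes
\begin{equation*}
\sum_{k=0}^n \sum_{j=0}^k (-1)^j \binom{n}{j} 2^{-(n+j-k)} W_{n+j-k}^{(m)} = 2^{-(n-1)} W_{-m(n-1)}^{(m)}.
\end{equation*}
Multiplying both sides by $2^n$ collapses the exponent on the left to $2^{k-j}$ and produces $2 W_{-m(n-1)}^{(m)}$ on the right, which is the claimed identity.

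There is essentially no obstacle: the only small thing to double-check is the bookkeeping on the power of $2$, namely that $2^n \cdot 2^{-(n+j-k)} = 2^{k-j}$ and $2^n \cdot 2^{-(n-1)} = 2$. Everything else is an immediate application of machinery already set up in Theorem~\ref{thm.qtnqdr9} and Lemma~\ref{lem.fwavfkb}.
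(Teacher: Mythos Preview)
Your proposal is correct and is exactly the approach the paper intends: it declares Propositions~\ref{prop.ft6yauz} and~\ref{prop.up76gra} to be ``immediate consequences of Theorem~\ref{thm.qtnqdr9} and Lemma~\ref{lem.fwavfkb},'' and you have carried out precisely that, identifying $s_k=2^{-k}W_k^{(m)}$, $\sigma_k=2^{-k}W_{-mk}^{(m)}$ from the second identity of the lemma and plugging into~\eqref{utju7bn}.
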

In particular,
\begin{equation}
\sum_{k = 0}^n {\sum_{j = 0}^k {( - 1)^j \binom{{n}}{j}2^{k - j} G_{n + j - k} } }  = 2G_{ - 2(n - 1)} 
\end{equation}
and
\begin{equation}
\sum_{k = 0}^n {\sum_{j = 0}^k {( - 1)^j \binom{{n}}{j}2^{k - j} T_{n + j - k} } }  = 2T_{ - 3(n - 1)} .
\end{equation}

\section{Identities involving classical polynomials}

This section contains applications of our results from Section 3 to classical polynomials. 
Due to the high number of new identities that we could state we restrict the exposition to some basis applications.

\subsection{Identities involving Fibonacci polynomials, Lucas polynomials, Chebyshev polynomials and more}

First we recall some facts about Horadam sequences. We do this for reasons that will become obvious below. 
The Horadam sequence $w_n = w_n(a,b;p,q)$ is defined, for all integers, by the recurrence relation~\cite{Horadam65}
\begin{equation*}\label{Def-Horadam}
w_0 = a,\,\, w_1 = b, \quad w_n = p w_{n - 1} - q w_{n - 2},\quad n\ge 2,
\end{equation*}
with
\begin{equation*}
w_{-n} = \frac{1}{q}(pw_{-n+1} - w_{-n+2})\,,
\end{equation*}
where $a$, $b$, $p$ and $q$ are arbitrary complex numbers with $p \neq 0$, $q \neq 0$, and $p^2 - 4q > 0$. 
The $n$-th term of a Horadam sequence is given by 
\begin{equation*}\label{Horadam_Binet}
w_n = w_n(a,b;p,q) = \frac{A \mu^n(p,q) - B \nu^n(p,q)}{\mu(p,q) - \nu(p,q)},
\end{equation*}
where
\[ A = w_1 - w_0 \nu(p,q), \quad B = w_1 - w_0 \mu(p,q), \]
and $\mu(p,q)$ and $\nu(p,q)$ are given by
\[ \mu = \mu(p,q) = \frac{p + \delta}{2}, \quad \nu = \nu(p,q) = \frac{p - \delta}{2}, \]
where $\delta=\sqrt{p^2 - 4q}$, so that $\mu(p,q)\,\nu(p,q)=q$. 
The sequence $w_n$ not only generalizes many important number sequences but also contains many polynomial sequences as special members: 
\begin{itemize}
\item $w_n(0,1;x,-1)=F_n(x)$ are the Fibonacci polynomials
\item $w_n(2,x;x,-1)=L_n(x)$ are the Lucas polynomials
\item $w_n(0,1;2x,-1)=P_n(x)$ are the Pell polynomials
\item $w_n(2,2x;2x,-1)=Q_n(x)$ are the Pell-Lucas polynomials
\item $w_n(1,x;2x,1)=T_n(x)$ are the Chebyshev polynomials of the first kind
\item $w_n(1,2x;2x,1)=U_n(x)$ are the Chebyshev polynomials of the second kind, and so on. 
\end{itemize}
We also mention the less known members
\begin{equation}\label{powers}
w_n(0,1;x+1,x)=\frac{x^n-1}{x-1} \qquad\text{and}\qquad w_n(2,x+1;x+1,x)=x^n-1,
\end{equation}
the first one being commonly called the base-$x$ repunits.

\begin{proposition}
If $n$ is a positive integer, then
\begin{equation}
\sum_{k=0}^n \sum_{j=0}^k \binom{n}{j} (-1)^j \left (\frac{p}{q}\right )^{n-k+j} w_{n-k+j} = (-1)^{n-1} q^{-(n-1)} w_{2(n-1)}.
\end{equation}
\end{proposition}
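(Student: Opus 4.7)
The plan is to apply the specialization \eqref{utju7bn} of Theorem~\ref{thm.qtnqdr9}, so I need to exhibit a binomial transform pair $\{s_n,\sigma_n\}$ whose first member is $s_k=(p/q)^k w_k$; the right-hand side of the proposition will then be forced to equal $\sigma_{n-1}$.

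To compute $\sigma_n=\sum_{k=0}^n\binom{n}{k}(-1)^k(p/q)^k w_k$, I would substitute the Binet-type expression $w_k=(A\mu^k-B\nu^k)/(\mu-\nu)$ and apply the binomial theorem term by term, reducing the problem to evaluating $1-p\mu/q$ and $1-p\nu/q$. Using the Vieta relations $\mu+\nu=p$ and $\mu\nu=q$ (which follow from the definition of $\mu,\nu$ as the roots of $x^2-px+q$), these simplify cleanly to
\begin{equation*}
1-\frac{p\mu}{q}=-\frac{\mu}{\nu},\qquad 1-\frac{p\nu}{q}=-\frac{\nu}{\mu}.
\end{equation*}
Together with $\mu\nu=q$ this gives $(1-p\mu/q)^n=(-1)^n\mu^{2n}/q^n$ and similarly for $\nu$, so
\begin{equation*}
\sigma_n=\frac{(-1)^n}{q^n}\cdot\frac{A\mu^{2n}-B\nu^{2n}}{\mu-\nu}=(-1)^n q^{-n}w_{2n}.
\end{equation*}

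Finally, I plug $s_{n-k+j}=(p/q)^{n-k+j}w_{n-k+j}$ into \eqref{utju7bn} and read off the right-hand side as $\sigma_{n-1}=(-1)^{n-1}q^{-(n-1)}w_{2(n-1)}$, which is exactly the claimed identity. The only step requiring any care is the Vieta simplification of $1-p\mu/q$ and $1-p\nu/q$; everything else is a direct application of Theorem~\ref{thm.qtnqdr9} and the binomial theorem.
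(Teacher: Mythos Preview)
Your proof is correct and follows essentially the same approach as the paper: both apply identity~\eqref{utju7bn} of Theorem~\ref{thm.qtnqdr9} to the binomial transform pair $\{(p/q)^n w_n,\;(-1)^n q^{-n} w_{2n}\}$. The only difference is that the paper cites this transform pair from~\cite{Frontczak}, whereas you supply a direct Binet-formula verification of it --- a welcome, self-contained addition rather than a change of method.
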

\begin{proof}
Apply Theorem \ref{thm.qtnqdr9} to the binomial transform pair \cite{Frontczak}
\begin{equation}\label{Hor_bin1}
\{s_n,\sigma_n\} = \{ (p/q)^{n} w_{n}, (-1)^n q^{-n} w_{2n} \}.
\end{equation}
\end{proof}

We skip the explicit presentation of the double sums associated with the classical polynomials. 
It is worth noticing, however, that if we work with \eqref{powers} we get
\begin{equation}
\sum_{k=0}^n \sum_{j=0}^k \binom{n}{j} (-1)^j (1+x)^{n-k+j} \left (1+\frac{1}{x^{n-k+j}}\right ) 
= (-1)^{n-1} \left (x^{n-1}+\frac{1}{x^{n-1}}\right )
\end{equation}
and
\begin{equation}
\sum_{k=0}^n \sum_{j=0}^k \binom{n}{j} (-1)^j (1+x)^{n-k+j} \left (1-\frac{1}{x^{n-k+j}}\right ) 
= (-1)^{n-1} \left (x^{n-1}-\frac{1}{x^{n-1}}\right ).
\end{equation}
Using the transformation $x\mapsto e^x$ and simplifying we derive at two new the identities involving hyperbolic functions.
\begin{corollary}
If $n$ is a positive integer, then
\begin{equation}
\sum_{k=0}^n \sum_{j=0}^k \binom{n}{j} (-1)^j 2^{n-k+j} \cosh^{n-k+j}\left (\frac{x}{2}\right ) \cosh\left ((n-k+j)\frac{x}{2}\right )
= (-1)^{n-1} \cosh ((n-1)x)
\end{equation}
and
\begin{equation}
\sum_{k=0}^n \sum_{j=0}^k \binom{n}{j} (-1)^j 2^{n-k+j} \cosh^{n-k+j}\left (\frac{x}{2}\right ) \sinh\left ((n-k+j)\frac{x}{2}\right )
= (-1)^{n-1} \sinh ((n-1)x).
\end{equation}
\end{corollary}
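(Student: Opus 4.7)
The proof plan is a direct substitution. The corollary is essentially a translation of the two displayed repunit identities (stated immediately before the corollary) into the language of hyperbolic functions, so no new machinery is required; the work is all in bookkeeping of the exponential factors.

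First, I would set $m = n-k+j$ for the running index inside the double sum and record the two elementary identities
\begin{equation*}
1 + e^y = 2 e^{y/2}\cosh(y/2), \qquad 1 - e^y = -2 e^{y/2}\sinh(y/2),
\end{equation*}
which follow by factoring out $e^{y/2}$. Applied with $y = x$ this gives $(1+e^x)^m = 2^m e^{mx/2}\cosh^m(x/2)$, and applied with $y = -mx$ it gives
\begin{equation*}
1 + e^{-mx} = 2 e^{-mx/2}\cosh(mx/2), \qquad 1 - e^{-mx} = 2 e^{-mx/2}\sinh(mx/2).
\end{equation*}

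Next I substitute $x \mapsto e^x$ in the two displayed repunit identities preceding the corollary. The key simplification is that the factors $e^{mx/2}$ and $e^{-mx/2}$ coming from $(1+e^x)^m$ and from $1 \pm e^{-mx}$ cancel exactly, leaving
\begin{equation*}
(1+e^x)^m\bigl(1 + e^{-mx}\bigr) = 2^{m+1}\cosh^m(x/2)\cosh(mx/2),
\end{equation*}
and analogously $(1+e^x)^m(1-e^{-mx}) = 2^{m+1}\cosh^m(x/2)\sinh(mx/2)$. On the right-hand sides, the replacement $x \mapsto e^x$ turns $x^{n-1} \pm x^{-(n-1)}$ into $2\cosh((n-1)x)$ and $2\sinh((n-1)x)$, respectively.

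Finally, I would cancel the common factor of $2$ on both sides to obtain the stated identities. The only thing to watch is sign conventions in the step $1 - e^{-mx} = 2e^{-mx/2}\sinh(mx/2)$ (versus $-2e^{-mx/2}\sinh(-mx/2)$), but this is merely verification; there is no structural obstacle. The whole argument is a substitution plus a two-line hyperbolic rewrite.
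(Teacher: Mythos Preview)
Your proposal is correct and follows exactly the route the paper takes: the paper simply says ``using the transformation $x\mapsto e^x$ and simplifying'' applied to the two repunit identities immediately preceding the corollary, and you have spelled out precisely that simplification. The cancellation of the $e^{mx/2}$ and $e^{-mx/2}$ factors and the final division by $2$ are exactly the bookkeeping the paper leaves implicit.
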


\begin{proposition}
If $n$ is a positive integer, then
\begin{equation}
\sum_{k=1}^n \sum_{j=0}^{k-1} \binom{n}{j} n (-1)^j \left (\frac{p}{q}\right )^{n-k+j} \frac{w_{n-k+j}}{k-j} 
= (-1)^{n} q^{-n}( w_{2n} - p^n w_n ).
\end{equation}
\end{proposition}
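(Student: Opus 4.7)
The plan is to apply the particular case \eqref{msdi8n1} of Corollary~\ref{cor.cp9d6at}, namely
\begin{equation*}
\sum_{k = 1}^n \sum_{j = 0}^{k - 1} (- 1)^j n \binom{n}{j} \frac{s_{n - k + j}}{k - j} = \sigma_n - (- 1)^n s_n,
\end{equation*}
to the Horadam binomial transform pair introduced in \eqref{Hor_bin1}, that is,
\begin{equation*}
s_n = (p/q)^n w_n, \qquad \sigma_n = (-1)^n q^{-n} w_{2n}.
\end{equation*}
This is the same pair that was used to establish the preceding proposition of the subsection, so no new verification is required.

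Substituting $s_{n-k+j} = (p/q)^{n-k+j} w_{n-k+j}$ on the left-hand side reproduces exactly the double sum appearing in the statement. Substituting on the right-hand side gives
\begin{equation*}
\sigma_n - (-1)^n s_n = (-1)^n q^{-n} w_{2n} - (-1)^n (p/q)^n w_n = (-1)^n q^{-n}\bigl(w_{2n} - p^n w_n\bigr),
\end{equation*}
which matches the claimed closed form. Hence the identity follows immediately from \eqref{msdi8n1} and \eqref{Hor_bin1}.

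There is no genuine obstacle here; the only thing to check is that the pair \eqref{Hor_bin1} is indeed a binomial transform pair in the sense required by Corollary~\ref{cor.cp9d6at}, which was already asserted in the preceding proof via the reference to \cite{Frontczak}. The argument therefore consists of a single substitution and an algebraic simplification.
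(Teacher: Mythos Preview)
Your proof is correct and follows exactly the paper's own approach: apply Corollary~\ref{cor.cp9d6at} (in particular, its special case~\eqref{msdi8n1}) to the binomial transform pair~\eqref{Hor_bin1}, then simplify the right-hand side. The paper's proof is just the one-line citation of these two ingredients, and your write-up spells out the substitution.
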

\begin{proof}
Apply Corollary \ref{cor.cp9d6at} to the binomial transform pair \eqref{Hor_bin1}.
\end{proof}

We mention explicitly the special cases associated with \eqref{powers}.
\begin{corollary}
If $n$ is a positive integer, then
\begin{align}
&\sum_{k=1}^n \sum_{j=0}^{k-1} \binom{n}{j} n (-1)^j 2^{n-k+j} \cosh^{n-k+j}\left (\frac{x}{2}\right ) \frac{\cosh\left ((n-k+j)\frac{x}{2}\right )}{k-j} \nonumber \\
&\qquad\qquad\qquad\qquad = (-1)^{n} \left (\cosh (nx) - 2^n \cosh^n \left (\frac{x}{2}\right ) \cosh\left (\frac{nx}{2}\right )\right )
\end{align}
and
\begin{align}
&\sum_{k=1}^n \sum_{j=0}^{k-1} \binom{n}{j} n (-1)^j 2^{n-k+j} \cosh^{n-k+j}\left (\frac{x}{2}\right ) \frac{\sinh\left ((n-k+j)\frac{x}{2}\right )}{k-j} \nonumber \\
&\qquad\qquad\qquad\qquad = (-1)^{n} \left (\sinh (nx) - 2^n \cosh^n \left (\frac{x}{2}\right ) \sinh\left (\frac{nx}{2}\right )\right ).
\end{align}
\end{corollary}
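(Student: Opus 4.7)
The plan is to obtain both identities as direct specializations of the preceding proposition, in exact parallel with how the first hyperbolic corollary of this subsection was extracted from Theorem~\ref{thm.qtnqdr9}. I will apply the proposition to two one-parameter Horadam sequences built from~\eqref{powers}, namely $w_n = x^n + 1$ (which yields the $\cosh$ identity) and $w_n = x^n - 1$ (which yields the $\sinh$ identity). Both satisfy the common recurrence $w_n = (x+1)w_{n-1} - xw_{n-2}$, so in each case the preceding proposition is invoked with $p = x+1$ and $q = x$.

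Substituting these data into that proposition yields the single polynomial identity
\[
\sum_{k=1}^n \sum_{j=0}^{k-1} \binom{n}{j}\frac{n(-1)^j}{k-j}(x+1)^{n-k+j}\bigl(1 \pm x^{-(n-k+j)}\bigr) = (-1)^n x^{-n}\bigl((x^{2n}\pm 1) - (x+1)^n(x^n\pm 1)\bigr),
\]
where the upper signs correspond to $w_n = x^n + 1$ and the lower signs to $w_n = x^n - 1$.

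Next I perform the substitution $x \mapsto e^x$ and invoke the factorizations $e^x + 1 = 2e^{x/2}\cosh(x/2)$, together with $e^{mx}+1 = 2e^{mx/2}\cosh(mx/2)$ and $e^{mx}-1 = 2e^{mx/2}\sinh(mx/2)$. Applied to $(e^x+1)^{n-k+j}$ and to $1 \pm e^{-(n-k+j)x}$, the exponential prefactors $e^{(n-k+j)x/2}$ and $e^{-(n-k+j)x/2}$ cancel, leaving the product $2^{n-k+j+1}\cosh^{n-k+j}(x/2)\cosh((n-k+j)x/2)$ or $2^{n-k+j+1}\cosh^{n-k+j}(x/2)\sinh((n-k+j)x/2)$ under the double sum, according to the choice of sign. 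The right-hand side collapses by the same cancellation to $2(-1)^n(\cosh(nx) - 2^n\cosh^n(x/2)\cosh(nx/2))$ and $2(-1)^n(\sinh(nx) - 2^n\cosh^n(x/2)\sinh(nx/2))$, respectively. Dividing both sides by $2$ produces the two identities of the corollary.

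The argument is purely mechanical once the hyperbolic factorizations are at hand; the only bookkeeping to watch is the matching of exponents and signs, and the verification that the contribution $e^{-nx}(e^x+1)^n(e^{nx}\pm 1) = 2^{n+1}\cosh^n(x/2)\{\cosh, \sinh\}(nx/2)$ on the right-hand side comes out cleanly. Since the same manoeuvre was used above to pass from the earlier polynomial identities in~\eqref{powers} to the first hyperbolic corollary of this subsection, no new conceptual obstacle arises.
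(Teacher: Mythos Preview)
Your proof is correct and follows essentially the same approach as the paper: the paper introduces this corollary as ``the special cases associated with~\eqref{powers}'' of the preceding proposition, which is precisely what you do---specialize to the Horadam sequences $w_n=x^n\pm1$ with $p=x+1$, $q=x$, then substitute $x\mapsto e^x$ and use the hyperbolic factorizations $e^x+1=2e^{x/2}\cosh(x/2)$, $e^{mx}\pm1=2e^{mx/2}\{\cosh,\sinh\}(mx/2)$, exactly mirroring the derivation of the first hyperbolic corollary in this subsection.
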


\begin{proposition}
If $n$ is a positive integer, then
\begin{equation}
\sum_{k=0}^n \sum_{j=0}^{k} \binom{n}{j} (-1)^j \left (\frac{p}{q}\right )^{k-j} w_{k-j} = q^{-n} w_{2n} + q^{-(n-1)} w_{2(n-1)}.
\end{equation}
\end{proposition}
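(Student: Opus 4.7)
The plan is to apply Corollary~\ref{cor_Ber_fin} directly to the Horadam binomial transform pair that appears just above in~\eqref{Hor_bin1}. Recall that Corollary~\ref{cor_Ber_fin} gives, for any binomial transform pair $\{s_n,\sigma_n\}$,
\begin{equation*}
\sum_{k = 0}^n \sum_{j = 0}^k (- 1)^j \binom{n}{j} s_{k - j} = (- 1)^n \bigl( \sigma_n - \sigma_{n - 1} \bigr),
\end{equation*}
so all that is needed is to plug in the correct sequences and simplify the right-hand side.

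First, I would take $s_n = (p/q)^n w_n$ and $\sigma_n = (-1)^n q^{-n} w_{2n}$, which form a binomial transform pair by~\eqref{Hor_bin1}. The left-hand side of Corollary~\ref{cor_Ber_fin} is then exactly
\begin{equation*}
\sum_{k=0}^n \sum_{j=0}^{k} \binom{n}{j} (-1)^j \left(\frac{p}{q}\right)^{k-j} w_{k-j},
\end{equation*}
which is the expression on the left of the claimed identity.

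Next, I would simplify the right-hand side. Substituting gives
\begin{equation*}
(-1)^n \bigl( (-1)^n q^{-n} w_{2n} - (-1)^{n-1} q^{-(n-1)} w_{2(n-1)} \bigr).
\end{equation*}
Since $(-1)^n(-1)^n = 1$ and $-(-1)^n(-1)^{n-1} = -(-1)^{2n-1} = 1$, this collapses to $q^{-n} w_{2n} + q^{-(n-1)} w_{2(n-1)}$, matching the claim.

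There is essentially no obstacle here: the only thing to be careful about is tracking the signs in the two terms coming from $\sigma_n$ and $\sigma_{n-1}$, since both $\sigma_n$ carries a factor $(-1)^n$ and it combines with the outer $(-1)^n$ from the corollary. Once that bookkeeping is done, the identity follows in one line.
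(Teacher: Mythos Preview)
Your proof is correct and matches the paper's approach exactly: the paper's proof simply says ``Apply Corollary~\ref{cor_Ber_fin} to the binomial transform pair~\eqref{Hor_bin1},'' and your sign bookkeeping for the right-hand side is accurate.
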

\begin{proof}
Apply Corollary \ref{cor_Ber_fin} to the binomial transform pair \eqref{Hor_bin1}.
\end{proof}

The special cases associated with \eqref{powers} are stated next.

\begin{corollary}
If $n$ is a positive integer, then
\begin{equation}
\sum_{k=0}^n \sum_{j=0}^{k} \binom{n}{j} (-1)^j 2^{k-j} \cosh^{k-j}\left (\frac{x}{2}\right ) \cosh\left ((k-j)\frac{x}{2}\right )
= \cosh (nx) + \cosh ((n-1)x)
\end{equation}
and
\begin{equation}
\sum_{k=0}^n \sum_{j=0}^{k} \binom{n}{j} (-1)^j 2^{k-j} \cosh^{k-j}\left (\frac{x}{2}\right ) \sinh\left ((k-j)\frac{x}{2}\right )
= \sinh (nx) + \sinh ((n-1)x).
\end{equation}
\end{corollary}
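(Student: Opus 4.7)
The plan is to apply the preceding proposition to the two Horadam members listed in \eqref{powers} and then substitute $x \mapsto e^x$, in direct parallel with the corollary that immediately precedes that proposition. Both members share $p = x+1$ and $q = x$ (so $p/q = (x+1)/x$), with closed forms $x^n + 1$ for $w_n(2,x+1;x+1,x)$ and $(x^n-1)/(x-1)$ for $w_n(0,1;x+1,x)$.

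First I would specialise the preceding proposition to $w_n = x^n + 1$. After simplifying the summand and the right-hand side (using $q^{-n}w_{2n} = x^n + x^{-n}$, and likewise for $n-1$), one obtains the polynomial identity
$$\sum_{k=0}^n \sum_{j=0}^k \binom{n}{j}(-1)^j (x+1)^{k-j}\bigl(1 + x^{-(k-j)}\bigr) = (x^n + x^{-n}) + (x^{n-1} + x^{-(n-1)}).$$
Applying the same proposition with $w_n = (x^n-1)/(x-1)$ and then multiplying through by $x-1$ to clear the denominator produces the sign-flipped companion
$$\sum_{k=0}^n \sum_{j=0}^k \binom{n}{j}(-1)^j (x+1)^{k-j}\bigl(1 - x^{-(k-j)}\bigr) = (x^n - x^{-n}) + (x^{n-1} - x^{-(n-1)}).$$

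Next I would substitute $x \mapsto e^x$. Using $1 + e^x = 2e^{x/2}\cosh(x/2)$ one has
$$(1+e^x)^{k-j}\bigl(1 \pm e^{-(k-j)x}\bigr) = 2^{k-j}\cosh^{k-j}(x/2)\bigl(e^{(k-j)x/2} \pm e^{-(k-j)x/2}\bigr),$$
which collapses to $2^{k-j+1}\cosh^{k-j}(x/2)\cosh((k-j)x/2)$ in the plus case and to $2^{k-j+1}\cosh^{k-j}(x/2)\sinh((k-j)x/2)$ in the minus case. The two right-hand sides simplify to $2\cosh(nx) + 2\cosh((n-1)x)$ and $2\sinh(nx) + 2\sinh((n-1)x)$, respectively, so dividing through by $2$ recovers the two stated identities.

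There is no real obstacle; the argument is a direct specialisation of the preceding proposition followed by the standard factorisation $1 + e^x = 2e^{x/2}\cosh(x/2)$. The only bookkeeping to watch is the clearing of the factor $x-1$ in the sinh variant and the uniform distribution of the factor of~$2$ picked up from the hyperbolic substitution.
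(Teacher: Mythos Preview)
Your proof is correct and follows exactly the route the paper intends: specialise the preceding proposition to the two Horadam members in~\eqref{powers}, then substitute $x\mapsto e^x$ and use $1+e^x=2e^{x/2}\cosh(x/2)$, just as was done for the earlier hyperbolic corollary. Your use of $w_n(2,x+1;x+1,x)=x^n+1$ is the correct closed form (the paper's display in~\eqref{powers} has a sign typo there), and the rest of the bookkeeping is fine.
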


\begin{proposition}
If $n$ is a positive integer, then
\begin{equation}
\sum_{k=0}^n \sum_{j=0}^{k} \binom{n+1}{j+1} (-1)^{j+1} \left (\frac{p}{q}\right )^{j+1} w_{j+1} 
= (n+1) (-1)^n q^{-n} w_{2n} + (-1)^{n+1} q^{-(n+1)} w_{2(n+1)} - (n+2)a.
\end{equation}
\end{proposition}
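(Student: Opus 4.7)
The plan is to recognize the proposition as a direct application of Corollary \ref{int_cor} to the Horadam binomial transform pair \eqref{Hor_bin1}, in the same spirit as the preceding propositions in this subsection. Recall that \eqref{Hor_bin1} gives the binomial transform pair
\begin{equation*}
s_n = (p/q)^n w_n, \qquad \sigma_n = (-1)^n q^{-n} w_{2n}.
\end{equation*}
Consequently $s_{j+1} = (p/q)^{j+1} w_{j+1}$, which is precisely the summand appearing on the left-hand side of the claim.

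The key observation is that Corollary \ref{int_cor} states
\begin{equation*}
\sum_{k=0}^n \sum_{j=0}^k (-1)^{j+1} \binom{n+1}{j+1} s_{j+1} = (n+1)\sigma_n + \sigma_{n+1} - (n+2)\sigma_0,
\end{equation*}
so I would substitute the explicit expressions for $\sigma_n$ and $\sigma_{n+1}$. For the constant term, note that $\sigma_0 = (-1)^0 q^{0} w_0 = w_0 = a$, which accounts exactly for the $(n+2)a$ in the stated identity.

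Putting these pieces together, the right-hand side becomes
\begin{equation*}
(n+1)(-1)^n q^{-n} w_{2n} + (-1)^{n+1} q^{-(n+1)} w_{2(n+1)} - (n+2) a,
\end{equation*}
matching the proposition. There is no real obstacle: the entire argument reduces to verifying that the transform pair \eqref{Hor_bin1} fits the template of Corollary \ref{int_cor} and tracking the initial value $\sigma_0 = a$. The only slight care needed is to confirm the sign and power conventions ($w_0 = a$ by definition of the Horadam sequence) so that the constant term is read off correctly.
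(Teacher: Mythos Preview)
Your proof is correct and follows exactly the paper's own argument: the paper simply says ``Apply Corollary \ref{int_cor} to the binomial transform pair \eqref{Hor_bin1}.'' You have spelled this out explicitly, including the identification $\sigma_0 = w_0 = a$, which is precisely what is needed.
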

\begin{proof}
Apply Corollary \ref{int_cor} to the binomial transform pair \eqref{Hor_bin1}.
\end{proof}

\begin{corollary}
If $n$ is a positive integer, then
\begin{align}
&\sum_{k=0}^n \sum_{j=0}^{k} \binom{n+1}{j+1} (-1)^{j+1} 2^{j+1} \cosh^{j+1}\left (\frac{x}{2}\right ) \cosh\left ((j+1)\frac{x}{2}\right ) \nonumber \\
&\qquad\qquad = (-1)^n \left ((n+1)\cosh (nx) - \cosh ((n+1)x)\right ) -(n+2)
\end{align}
and
\begin{align}
&\sum_{k=0}^n \sum_{j=0}^{k} \binom{n+1}{j+1} (-1)^{j+1} 2^{j+1} \cosh^{j+1}\left (\frac{x}{2}\right ) \sinh\left ((j+1)\frac{x}{2}\right ) \nonumber \\
&\qquad\qquad = (-1)^n \left ((n+1)\sinh (nx) - \sinh ((n+1)x)\right ).
\end{align}
\end{corollary}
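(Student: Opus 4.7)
My plan is to follow the same recipe used for the preceding hyperbolic corollaries in this subsection: specialize the immediately preceding Proposition to each of the two Horadam sequences listed in \eqref{powers}, then apply the substitution $x\mapsto e^x$ and simplify using the factorizations
\[
e^x+1 = 2e^{x/2}\cosh(x/2),\qquad 1+e^{-mx}= 2e^{-mx/2}\cosh(mx/2),\qquad 1-e^{-mx}= 2e^{-mx/2}\sinh(mx/2),
\]
together with $x^m+x^{-m}\mapsto 2\cosh(mx)$ and $x^m-x^{-m}\mapsto 2\sinh(mx)$.

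For the first identity I would take the Horadam sequence with $p=x+1$, $q=x$ whose closed form is $w_n=x^n+1$, so that $a=w_0=2$. Substituting into the Proposition, the factor $(p/q)^{j+1}w_{j+1}$ on the left becomes $(x+1)^{j+1}\bigl(1+x^{-(j+1)}\bigr)$, while the quantities $q^{-n}w_{2n}$ and $q^{-(n+1)}w_{2(n+1)}$ on the right become $x^n+x^{-n}$ and $x^{n+1}+x^{-(n+1)}$, and the constant contributes $-2(n+2)$. After the substitution $x\mapsto e^x$, each summand on the left acquires the factor $2^{j+2}\cosh^{j+1}(x/2)\cosh((j+1)x/2)$, while the right-hand side becomes $2(-1)^n\bigl[(n+1)\cosh(nx)-\cosh((n+1)x)\bigr]-2(n+2)$. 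Dividing through by $2$ yields the first asserted identity.

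For the second identity I would run the identical computation using the Horadam sequence satisfying $w_n=x^n-1$, whose initial value $a=w_0=0$ kills the $-(n+2)a$ term. Only two algebraic changes occur: every $1+x^{-m}$ is replaced by $1-x^{-m}$, and every $x^m+x^{-m}$ by $x^m-x^{-m}$. Consequently the factorization $1-e^{-mx}=2e^{-mx/2}\sinh(mx/2)$ produces hyperbolic sines in place of cosines on both sides, and after dividing by $2$ the stated $\sinh$ identity drops out.

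The only real obstacle is careful bookkeeping of the powers of $2$ and of $e^{\pm mx/2}$ on both sides; the pattern is exactly the one already displayed in the earlier corollaries of this subsection, so no new analytical ingredient is required.
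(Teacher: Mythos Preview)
Your proposal is correct and follows exactly the route the paper takes throughout this subsection: specialize the immediately preceding Proposition (which itself comes from Corollary~\ref{int_cor} applied to the pair~\eqref{Hor_bin1}) to the two Horadam sequences in~\eqref{powers}, substitute $x\mapsto e^x$, and rewrite everything in terms of hyperbolic functions via the identities you listed. The only cosmetic remark is that for the $\sinh$ identity the paper's~\eqref{powers} literally lists the repunits $(x^n-1)/(x-1)$ rather than $x^n-1$; your choice $w_n=x^n-1$ (with $a=0$) is the cleaner route and is equivalent up to clearing the constant factor $x-1$ on both sides.
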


\begin{remark}
Many more double sum identities involving classical polynomials can be derived by making use of the binomial transform pairs \cite{Frontczak}
\begin{align*}
\left\{s_n,\sigma_n \right\} &= \left\{\left (\frac{p^2-q}{pq}\right )^n w_{n}, (-pq)^{-n} w_{3n} \right\}, \\
\left\{s_n,\sigma_n \right\} &= \left\{\left (\frac{p^2-q}{q^2}\right )^n w_{2n}, (-1)^n \left(\frac{p}{q^2}\right)^{n} w_{3n}, \right\}, \\
\left\{s_n,\sigma_n \right\} &= \left\{\left (\frac{p(p^2-2q)}{q(p^2-q)}\right )^n w_{n}, (-1)^n (q(p^2-q))^{-n} w_{4n},\right\}, \\
\left\{s_n,\sigma_n \right\} &= \left\{\left (\frac{p^2-2q}{q^2}\right )^n w_{2n}, (-1)^n q^{-2n} w_{4n}, \right\}. 
\end{align*}
\end{remark}

\subsection{Identities involving Bernoulli polynomials}

\begin{proposition}
If $n$ is a positive integer, $m$ is a non-negative integer and $x$ and $y$ are non-zero complex numbers, then
\begin{equation}\label{t6lc9ok}
\sum_{k = 0}^n {( - 1)^k \sum_{j = 0}^k {\binom{{n}}{j}\frac{{B_{m + n - k + j} (y)}}{{x^{m - k + j} }}} }  = ( - 1)^{m + n} \sum_{k = 0}^m {( - 1)^k \binom{{m}}{k}\frac{{B_{n + k - 1} (x + y)}}{{x^{k - 1} }}} .
\end{equation}
In particular,
\begin{equation}
\sum_{k = 0}^n {( - 1)^k \sum_{j = 0}^k {\binom{{n}}{j}x^{k - j} B_{n - k + j}(y) } }  = ( - 1)^n xB_{n - 1} (x + y).
\end{equation}
\end{proposition}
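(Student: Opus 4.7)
The plan is to recognize the identity as an application of Theorem~\ref{thm.qtnqdr9} after choosing an appropriate binomial transform pair built from Bernoulli polynomials. The natural candidate, suggested by the way $B_{m+n-k+j}(y)$ appears on the left and $B_{n+k-1}(x+y)$ on the right, is
\begin{equation*}
s_n = (-1)^n \frac{B_n(y)}{x^n}, \qquad \sigma_n = \frac{B_n(x+y)}{x^n}.
\end{equation*}
To confirm that $\{s_n,\sigma_n\}$ is a binomial transform pair, I would compute
\begin{equation*}
\sum_{k=0}^n \binom{n}{k}(-1)^k s_k = \sum_{k=0}^n \binom{n}{k} \frac{B_k(y)}{x^k} = \frac{1}{x^n}\sum_{k=0}^n \binom{n}{k} B_k(y)\, x^{n-k},
\end{equation*}
and invoke the Bernoulli addition formula \eqref{de8ucu6} (with $x$ and $y$ interchanged) to recognize the inner sum as $B_n(x+y)$, giving exactly $\sigma_n$.

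Next I would feed this pair into Theorem~\ref{thm.qtnqdr9}. The left-hand side becomes
\begin{equation*}
\sum_{k=0}^n \sum_{j=0}^k (-1)^j \binom{n}{j} (-1)^{m+n-k+j} \frac{B_{m+n-k+j}(y)}{x^{m+n-k+j}} = (-1)^{m+n} \sum_{k=0}^n \sum_{j=0}^k (-1)^k \binom{n}{j} \frac{B_{m+n-k+j}(y)}{x^{m+n-k+j}},
\end{equation*}
using $(-1)^{2j}=1$ and $(-1)^{-k}=(-1)^k$, while the right-hand side becomes
\begin{equation*}
\sum_{k=0}^m (-1)^k \binom{m}{k} \frac{B_{n+k-1}(x+y)}{x^{n+k-1}}.
\end{equation*}
Multiplying through by $(-1)^{m+n} x^n$ cleans the powers of $x$ on both sides into $x^{k-m-j}$ and $x^{1-k}$ respectively, reproducing identity \eqref{t6lc9ok} verbatim.

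The special case is immediate: setting $m=0$ collapses the outer sum on the right to its $k=0$ term, so the right-hand side reduces to $(-1)^n\, x\, B_{n-1}(x+y)$, matching the stated corollary. The only real technical step is the verification of the binomial transform pair via \eqref{de8ucu6}; after that, the proof is purely bookkeeping of signs and powers of $x$, so I do not anticipate any serious obstacle.
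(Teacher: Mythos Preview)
Your proof is correct and follows exactly the same route as the paper: you identify the binomial transform pair $s_n=(-1)^n x^{-n}B_n(y)$, $\sigma_n=x^{-n}B_n(x+y)$ via the addition formula~\eqref{de8ucu6}, and then feed it into Theorem~\ref{thm.qtnqdr9}. Your sign and power-of-$x$ bookkeeping is accurate and in fact more explicit than the paper's terse ``which when used in Theorem~\ref{thm.qtnqdr9} produces~\eqref{t6lc9ok}.''
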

\begin{proof}
From the complement theorem of the Bernoulli polynomials~\eqref{de8ucu6}, we can identify the binomial transform pair:
\begin{equation}\label{kc11oa4}
s_n  = ( - 1)^n x^{ - n} B_n (y) \qquad\text{and}\qquad \sigma_n = x^{- n} B_n (x + y),
\end{equation}
which when used in Theorem~\ref{thm.qtnqdr9} produces~\eqref{t6lc9ok}.
\end{proof}

\begin{proposition}
If $m$ and $n$ are non-negative integers and $x$ and $y$ are complex numbers, then
\begin{align}
&\sum_{k = 1}^n {( - 1)^k \sum_{j = 0}^{k - 1} {n\binom nj\frac{{x^{k - j} }}{{k - j}}B_{m + n - k + j} (y)} }\nonumber\\
&\qquad  = ( - 1)^{m + n} \sum_{k = 0}^m {( - 1)^k \binom{{m}}{k}x^{m - k} B_{k + n} (x + y)}  - ( - 1)^n B_{m + n} (y).
\end{align}
In particular,
\begin{equation}
\sum_{k = 1}^n {( - 1)^k \sum_{j = 0}^{k - 1} {n\binom nj\frac{{x^{k - j} }}{{k - j}}B_{n - k + j} (y)} }=(-1)^n\left(B_n(x+y)-B_n(y)\right).
\end{equation}
\end{proposition}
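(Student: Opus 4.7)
The plan is to apply Corollary~\ref{cor.cp9d6at} to the same binomial transform pair used in the proof of the previous proposition, namely
\begin{equation*}
s_n = (-1)^n x^{-n} B_n(y), \qquad \sigma_n = x^{-n} B_n(x+y),
\end{equation*}
from~\eqref{kc11oa4}. That this really is a binomial transform pair follows from~\eqref{de8ucu6} (via the symmetry in $x,y$), so no new verification is required.

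First I would substitute $s_{m+n-k+j}=(-1)^{m+n-k+j}x^{-(m+n-k+j)}B_{m+n-k+j}(y)$ into the left-hand side of Corollary~\ref{cor.cp9d6at}. The double sign $(-1)^j(-1)^{m+n-k+j}=(-1)^{m+n-k+2j}=(-1)^{m+n}(-1)^{k}$ collapses neatly, and the factor $x^{-(m+n-k+j)}=x^{-(m+n)}x^{k-j}$ pulls out a common $x^{-(m+n)}$. Thus the LHS of Corollary~\ref{cor.cp9d6at} equals
\begin{equation*}
(-1)^{m+n}x^{-(m+n)}\sum_{k=1}^n (-1)^k\sum_{j=0}^{k-1} n\binom{n}{j}\frac{x^{k-j}}{k-j}B_{m+n-k+j}(y).
\end{equation*}
Next I would clean up the right-hand side: $\sigma_{k+n}=x^{-(k+n)}B_{k+n}(x+y)$ and $s_{m+n}=(-1)^{m+n}x^{-(m+n)}B_{m+n}(y)$. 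Multiplying the whole identity by $(-1)^{m+n}x^{m+n}$ converts $x^{-(k+n)}$ into $x^{m-k}$ in the first RHS sum and turns the stray $(-1)^{n}s_{m+n}$ term into $-(-1)^{n}B_{m+n}(y)$, yielding the claimed identity.

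The particular case follows immediately by putting $m=0$: the outer $k$-sum on the right collapses to the single term $B_n(x+y)$ and $(-1)^{m+n}=(-1)^n$, giving $(-1)^n\bigl(B_n(x+y)-B_n(y)\bigr)$.

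The main obstacle is purely bookkeeping: tracking the two sign factors and the two $x$-power factors through the manipulation. Nothing conceptual is needed beyond having the right binomial transform pair already in hand from the preceding proposition.
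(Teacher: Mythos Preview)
Your proposal is correct and follows exactly the same approach as the paper: the paper's proof is simply ``Use~\eqref{kc11oa4} in Corollary~\ref{cor.cp9d6at}'', and you have carried out precisely that substitution with the sign and power bookkeeping made explicit.
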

\begin{proof}
Use~\eqref{kc11oa4} in Corollary~\ref{cor.cp9d6at}.
\end{proof}

\begin{proposition}
If $n$ is a positive integer, $m$ is a non-negative integer and $x$ and $y$ are non-zero complex numbers, then
\begin{align}
&\sum_{k = 0}^n {( - 1)^k \sum_{j = 0}^k {\binom{{n}}{j}x^{n - m - k + j} B_{m + k - j} (y)} }\nonumber\\
&\qquad  = ( - 1)^{m - n} \sum_{k = 0}^m {( - 1)^k \binom{{m}}{k}x^{ - k} \left( {B_{k + n} (x + y) - xB_{k + n - 1} (x + y)} \right)} .
\end{align}
In particular,
\begin{equation}
\sum_{k = 0}^n {( - 1)^k \sum_{j = 0}^k {\binom{{n}}{j}x^{n - k + j} B_{k - j} (y)} }=(-1)^n\left(B_n(x+y)-xB_{n-1}(x+y)\right).
\end{equation}
\end{proposition}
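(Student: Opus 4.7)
The plan is to invoke Corollary~\ref{cor_Ber_fin} with the binomial transform pair~\eqref{kc11oa4}, namely $s_n = (-1)^n x^{-n} B_n(y)$ and $\sigma_n = x^{-n} B_n(x+y)$, whose validity was established via the addition formula~\eqref{de8ucu6} in the proof of the preceding proposition. This is the same pair already used twice in this subsection, so no new transform-pair identification is needed.

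First I would substitute $s_{m+k-j} = (-1)^{m+k-j} x^{-(m+k-j)} B_{m+k-j}(y)$ into the left-hand side of Corollary~\ref{cor_Ber_fin}. The factor $(-1)^j$ in the corollary combines with the $(-1)^{-j}$ coming from $s_{m+k-j}$ to produce $(-1)^{m+k}$ in each summand. Multiplying the entire identity through by $(-1)^m x^n$ then converts the outer sign into $(-1)^k$ and the power of $x$ into $x^{n-m-k+j}$, producing exactly the left-hand side of the target identity.

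On the right, the difference $\sigma_{k+n} - \sigma_{k+n-1}$ expands to $x^{-(k+n)} B_{k+n}(x+y) - x^{-(k+n-1)} B_{k+n-1}(x+y)$. The prefactor $(-1)^n$ of Corollary~\ref{cor_Ber_fin}, multiplied by the extra $(-1)^m x^n$, gives a sign $(-1)^{m+n} = (-1)^{m-n}$ and contributes $x^n$ which absorbs the $x^{-n}$ inside each term; what remains is $x^{-k}\bigl(B_{k+n}(x+y) - x B_{k+n-1}(x+y)\bigr)$, matching the claimed right-hand side. The particular case drops out on setting $m=0$: only the $k=0$ summand of the outer sum on the right survives, with $\binom{0}{0}(-1)^0 = 1$ and $(-1)^{-n} = (-1)^n$.

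I do not expect a genuine obstacle here; the entire argument is mechanical once one commits to the correct scaling factor $(-1)^m x^n$. The only point that demands care is the bookkeeping of signs and powers of $x$ when rewriting $s_{m+k-j}$ with a shifted index, since $(-1)^{m+k-j}$ and $x^{-(m+k-j)}$ must both be tracked through the multiplication by $(-1)^m x^n$ in order to land precisely on the exponent $n-m-k+j$ and the sign $(-1)^k$ in the stated identity.
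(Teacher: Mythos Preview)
Your proposal is correct and follows exactly the approach indicated in the paper: substitute the binomial transform pair~\eqref{kc11oa4} into Corollary~\ref{cor_Ber_fin} and normalize by the factor $(-1)^m x^n$. The bookkeeping of signs and powers of $x$ that you spell out is accurate, and the particular case $m=0$ follows as you describe.
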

\begin{proof}
Use~\eqref{kc11oa4} in Corollary~\ref{cor_Ber_fin}.
\end{proof}

\begin{lemma}[{\cite[Section 13]{Adegoke0}}]
If $n$ and $r$ are non-negative integers, then
\begin{equation}\label{etio67p} 
\sum_{k = 0}^n {( - 1)^k \binom{{n}}{k}B_r \left( {k + 1} \right)}  = ( - 1)^n (n - 1)!r\braces{{ r}}{n},\quad n\ne0,
\end{equation}
and
\begin{equation}\label{cmg5y2b}
\sum_{k = 0}^n {( - 1)^k \binom{{n}}{k}\frac{{B_r \left( {k + 1} \right)}}{{k + 1}}}  = \frac{{B_r }}{{n + 1}} + \frac{{( - 1)^n n!}}{{n + 1}}r\braces{{ r - 1}}{n}.
\end{equation}
\end{lemma}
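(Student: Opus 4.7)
The plan is to use the finite difference calculus. Writing $\Delta f(x) = f(x+1) - f(x)$, the elementary identity
\begin{equation*}
\sum_{k=0}^n (-1)^k \binom{n}{k} f(k) = (-1)^n \Delta^n f(0),
\end{equation*}
combined with the defining difference relation $\Delta B_r(x) = r x^{r-1}$ for Bernoulli polynomials, reduces both claims to a finite-difference evaluation of $x^{r-1}$, which is covered by the Stirling-number identity~\eqref{y7bhb70}.

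For~\eqref{etio67p}, taking $f(k) = B_r(k+1)$ turns the left-hand side into $(-1)^n \Delta^n B_r(x)\big|_{x=1} = (-1)^n r\,\Delta^{n-1} x^{r-1}\big|_{x=1}$ after iterating $\Delta B_r(x) = r x^{r-1}$. The remaining quantity
\begin{equation*}
\Delta^{n-1} x^{r-1}\big|_{x=1} = \sum_{k=0}^{n-1}(-1)^{n-1-k}\binom{n-1}{k}(k+1)^{r-1}
\end{equation*}
must be converted to an origin-based sum before~\eqref{y7bhb70} can be applied; I would do this via the index shift $j=k+1$ and the binomial relation $\binom{n-1}{j-1} = \tfrac{j}{n}\binom{n}{j}$, which absorbs the extra factor of $j$ and yields $\tfrac{1}{n}\sum_{j=0}^n(-1)^{n-j}\binom{n}{j}\,j^r = (n-1)!\braces{r}{n}$ by~\eqref{y7bhb70}.

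For~\eqref{cmg5y2b}, the first step is the rearrangement $\tfrac{1}{k+1}\binom{n}{k} = \tfrac{1}{n+1}\binom{n+1}{k+1}$, after which the shift $j = k+1$ and the isolation of the $j=0$ term give
\begin{equation*}
\sum_{k=0}^n(-1)^k\binom{n}{k}\frac{B_r(k+1)}{k+1} = \frac{B_r}{n+1} - \frac{1}{n+1}\sum_{j=0}^{n+1}(-1)^j\binom{n+1}{j}B_r(j).
\end{equation*}
The remaining sum is $(-1)^{n+1}\Delta^{n+1} B_r(0) = (-1)^{n+1}\,r\,\Delta^{n} x^{r-1}\big|_{x=0}$, evaluated at the origin this time, so~\eqref{y7bhb70} applies directly and delivers $(-1)^{n+1}\,r\,n!\braces{r-1}{n}$; back-substitution yields~\eqref{cmg5y2b}. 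The main subtlety is in~\eqref{etio67p}, where the off-origin base $x=1$ forces the index-shift trick above; without it one is stuck with a sum of $(k+1)^{r-1}$ terms that does not directly match the standard Stirling expansion at the origin.
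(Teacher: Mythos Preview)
Your argument is correct. The finite-difference framework you use is sound: the identity $\sum_{k=0}^n(-1)^k\binom{n}{k}f(k)=(-1)^n\Delta^n f(0)$, together with $\Delta B_r(x)=rx^{r-1}$, reduces both formulas to the explicit Stirling representation~\eqref{y7bhb70}, and your index shift $\binom{n-1}{j-1}=\tfrac{j}{n}\binom{n}{j}$ for~\eqref{etio67p} and the absorption $\tfrac{1}{k+1}\binom{n}{k}=\tfrac{1}{n+1}\binom{n+1}{k+1}$ for~\eqref{cmg5y2b} are exactly the right moves. One small point worth stating explicitly is the boundary case $r=0$: then $B_0(x)\equiv 1$, both sides of~\eqref{etio67p} vanish trivially, and~\eqref{cmg5y2b} reduces to the elementary $\sum_{k=0}^n(-1)^k\binom{n}{k}/(k+1)=1/(n+1)$; your use of $rx^{r-1}$ is then to be read as identically zero.

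As for comparison: the paper does not supply its own proof of this lemma but quotes it from~\cite[Section 13]{Adegoke0}. Your self-contained derivation via forward differences is a clean and natural way to establish the identities directly from~\eqref{y7bhb70}, and it makes transparent why the shifted argument $B_r(k+1)$ in~\eqref{etio67p} forces the off-origin evaluation that you handle with the $j=k+1$ trick.
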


\begin{proposition}
If $n$ is a positive integer greater than or equal to $2$ and $m$ and $r$ are non-negative integers, then
\begin{align}
&\sum_{k = 0}^n {\sum_{j = 0}^k {( - 1)^j \binom{{n}}{j}B_r \left( {m + n - k + j + 1} \right)} }\nonumber\\
&\qquad  = ( - 1)^{n - 1} r\sum_{k = 0}^m {( - 1)^k \binom{{m}}{k}\left( {n + k - 2} \right)!\braces{{ r}}{{n + k - 1}}} 
\end{align}
and
\begin{align}\label{u0jw006}
&\sum_{k = 0}^n {\sum_{j = 0}^k {( - 1)^j \binom{{n}}{j}\frac{{B_r \left( {m + n - k + j + 1} \right)}}{{m + n - k + j + 1}}} }\nonumber\\
&\qquad= \frac{{B_r }}{{m + n}}\binom{{m + n - 1}}{m}^{ - 1}+ ( - 1)^{n - 1} r\sum_{k = 0}^m {\frac{{\left( {n + k - 1} \right)!}}{{n + k}}\binom{{m}}{k}\braces{{ r - 1}}{{n + k - 1}}} 
\end{align}
\end{proposition}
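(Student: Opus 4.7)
The plan is to apply Theorem~\ref{thm.qtnqdr9} to two binomial transform pairs that can be read off directly from the preceding Lemma. From~\eqref{etio67p}, after identifying
\[
s_k = B_r(k+1), \qquad \sigma_k = (-1)^k (k-1)! \, r \braces{r}{k} \quad (k \ge 1),
\]
Theorem~\ref{thm.qtnqdr9} rewrites the double sum on the left of the first identity as $\sum_{k=0}^m (-1)^k \binom{m}{k} \sigma_{n+k-1}$. The hypothesis $n \ge 2$ ensures $n+k-1 \ge 1$ throughout the range of summation, so the closed form for $\sigma$ applies term by term; combining the sign factors $(-1)^k \cdot (-1)^{n+k-1}$ into the overall $(-1)^{n-1}$ yields the stated right-hand side.

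For the second identity, the same recipe applied to~\eqref{cmg5y2b} produces the pair
\[
s_k = \frac{B_r(k+1)}{k+1}, \qquad \sigma_k = \frac{B_r}{k+1} + \frac{(-1)^k k!}{k+1} \, r \braces{r-1}{k}.
\]
Applying Theorem~\ref{thm.qtnqdr9} and splitting the right-hand side into the two natural pieces, the Stirling-number piece becomes exactly the second summand in~\eqref{u0jw006}. What remains is the scalar piece
\[
B_r \sum_{k=0}^m (-1)^k \binom{m}{k} \frac{1}{n+k}.
\]
To finish, I would evaluate this sum via the Beta integral: writing $1/(n+k) = \int_0^1 x^{n+k-1}\,dx$, exchanging sum and integral, and applying the binomial theorem gives $B(n,m+1) = (n-1)!\,m!/(m+n)!$, which rearranges to $\frac{1}{m+n}\binom{m+n-1}{m}^{-1}$, matching the first summand in~\eqref{u0jw006}.

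The only point requiring real care, rather than a genuine obstacle, is the hypothesis $n \ge 2$. It is precisely what ensures that the index $n+k-1$ in $\sigma_{n+k-1}$ never reaches the value $0$, where the closed-form expressions from the Lemma fail (and, in the second identity, where the scalar sum $\sum (-1)^k\binom{m}{k}/(n+k)$ would also develop a $k=0$ singularity if $n=0$). Under $n \ge 2$ no correction terms are needed, and both identities follow in one clean application of Theorem~\ref{thm.qtnqdr9} plus, for the second, one Beta-integral evaluation.
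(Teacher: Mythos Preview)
Your proof is correct and follows essentially the same route as the paper: identify the binomial transform pairs from~\eqref{etio67p} and~\eqref{cmg5y2b}, apply Theorem~\ref{thm.qtnqdr9}, and for the second identity invoke the evaluation $\sum_{k=0}^m (-1)^k\binom{m}{k}\frac{1}{n+k}=\frac{1}{m+n}\binom{m+n-1}{m}^{-1}$ (which you justify via the Beta integral, while the paper simply quotes it). Your explicit remark on why $n\ge 2$ is needed is a welcome clarification that the paper leaves implicit.
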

\begin{proof}
From~\eqref{etio67p} and~\eqref{cmg5y2b}, we identify the binomial transform pairs
\begin{equation}\label{qtczn06}
s_n = B_r (n + 1) \qquad\text{and}\qquad \sigma_n = ( - 1)^n (n - 1)!r\braces{{ r}}{n},
\end{equation}
and
\begin{equation}
s_n = \frac{{B_r (n + 1)}}{{n + 1}} \qquad\text{and}\qquad \sigma_n = \frac{{( - 1)^n n!}}{{n + 1}}r\braces{{ r - 1}}{n},
\end{equation}
and use these in Theorem~\ref{thm.qtnqdr9}. In simplifying~\eqref{u0jw006}, we used
\begin{equation}
\sum_{k = 0}^m {( - 1)^k \binom{{m}}{k}\frac{1}{{n + k}}}  = \frac{1}{{m + n}}\binom{{m + n - 1}}{m}^{ - 1}.
\end{equation}
\end{proof}

\begin{proposition}
If $n$ is a positive integer and $m$ and $r$ are non-negative integers, then
\begin{align}
&\sum_{k = 1}^n {\sum_{j = 0}^{k - 1} {( - 1)^j n\binom{{n}}{j}\frac{{B_r \left( {m + n - k + j + 1} \right)}}{{k - j}}} }\nonumber\\
&\qquad  = ( - 1)^n r\sum_{k = 0}^m {\binom{{m}}{k}\braces{{ r}}{{k + n}}(k+n-1)!}  - ( - 1)^n B_r \left( {m + n + 1} \right).
\end{align}
In particular,
\begin{equation}
\sum_{k = 1}^n {\sum_{j = 0}^{k - 1} {( - 1)^j n\binom{{n}}{j}\frac{{B_r \left( {n - k + j + 1} \right)}}{{k - j}}} }=(-1)^nr\braces rn(n-1)!-(-1)^nB_r(n+1).
\end{equation}
\end{proposition}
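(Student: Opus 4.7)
The plan is to apply Corollary~\ref{cor.cp9d6at} directly to the binomial transform pair already identified in~\eqref{qtczn06}, namely
\begin{equation*}
s_n = B_r(n+1), \qquad \sigma_n = (-1)^n (n-1)!\, r \braces{r}{n},
\end{equation*}
which is the pair arising from identity~\eqref{etio67p}. Corollary~\ref{cor.cp9d6at} states that for any binomial transform pair $\{s_n,\sigma_n\}$,
\begin{equation*}
\sum_{k = 1}^n \sum_{j = 0}^{k - 1} (- 1)^j n \binom{n}{j} \frac{s_{m + n - k + j}}{k - j} = \sum_{k = 0}^m (-1)^k \binom{m}{k}\sigma_{k + n} - (-1)^n s_{m + n}.
\end{equation*}
Substituting the above pair, the left-hand side becomes exactly the double sum in the statement, while on the right-hand side the factor $(-1)^k \sigma_{k+n}$ becomes
\begin{equation*}
(-1)^k \cdot (-1)^{k+n} (k+n-1)!\, r \braces{r}{k+n} = (-1)^n\, r \,(k+n-1)! \braces{r}{k+n},
\end{equation*}
since $(-1)^k(-1)^{k+n}=(-1)^n$. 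Factoring out the $(-1)^n r$ yields the first displayed identity.

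The particular case is obtained by setting $m = 0$ in the general identity. The outer sum on the right collapses to its single $k=0$ term, giving $(-1)^n r \braces{r}{n}(n-1)!$, and the remaining term is $-(-1)^n B_r(n+1)$, exactly as stated.

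There is no real obstacle here; the argument is a direct specialization of an earlier corollary to a binomial transform pair that has already been established. The only minor bookkeeping issue is tracking the sign $(-1)^k(-1)^{k+n}=(-1)^n$ so that the factor $(-1)^n r$ can be pulled out of the sum cleanly.
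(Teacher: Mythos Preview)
Your proof is correct and follows exactly the same approach as the paper: apply Corollary~\ref{cor.cp9d6at} to the binomial transform pair~\eqref{qtczn06}. The only difference is that you spell out the sign computation $(-1)^k(-1)^{k+n}=(-1)^n$ explicitly, which the paper leaves implicit.
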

\begin{proof}
Use~\eqref{qtczn06} in Corollary~\ref{cor.cp9d6at}.
\end{proof}

\section*{Funding}
This research did not receive any specific grant from funding agencies in the public, commercial, or not-for-profit sectors.

\section*{Competing interests}
The authors have no competing interests to declare that are relevant to the content of this article.


\begin{thebibliography}{99}


\bibitem{Adegoke18c} 
K. Adegoke, Linear properties of generalized $n-$step Fibonacci numbers, Preprint, (2018). arXiv:1808.02878[math.NT] 

\bibitem{Adegoke21}
K. Adegoke,  A. Olatinwo and S. Gosh, Cubic binomial Fibonacci sums, \emph{Electronic Journal of Mathematics} {\bf 2} (2021), 44--51.

\bibitem{Adegoke0}
K. Adegoke, Binomial transforms and the binomial convolution of sequences, Preprint, (2025). arXiv:2507.04179[math.NT]

\bibitem{Adegoke1}
K. Adegoke and R. Frontczak, Some notes on an identity of Frisch, Open J. Math. Sci. 8 (2024), 216--226.

\bibitem{Adegoke2}
K. Adegoke, R. Frontczak and K. Gryszka, Finite sums associated with some polynomial identities, 
Integral Transforms Spec. Funct. (2025), DOI:10.1080/10652469.2025.2529410

\bibitem{Adegoke3}
K. Adegoke, R. Frontczak and K. Gryszka, Double sums involving binomial coefficients and special numbers, Preprint, (2025), under review. 

\bibitem{Boya1} 
K. N. Boyadzhiev, Harmonic number identities via Euler's transform, J. Integer Seq. 12 (2009), Article 09.6.1.

\bibitem{Boya2}
K. N. Boyadzhiev, Binomial transform and the backward difference, Adv. Appl. Discrete Math. 18 (2014), 43--63.

\bibitem{Boya3} 
K. N. Boyadzhiev, Binomial transform of products, Ars Combinatoria 126 (2016), 415--434.

\bibitem{Boya4}
K. N. Boyadzhiev, \emph{Notes on the Binomial Transform}, World Scientific, (2018). 

\bibitem{Camargo21} 
A. P. Carmago and G. C. O. Teruya, A few remarks on the values of the Bernoulli polynomials at rational arguments and some relations with $\zeta(2k+1)$, \emph{Notes on Number Theory and Discrete Mathematics} {\bf 27}:4 (2021), 180--186.

\bibitem{chen07} 
K. W. Chen, Identities from the binomial transform, J. Number Theory 124 (2007), 142--150.

\bibitem{donaghey76} R. Donaghey, Binomial self-inverse sequences and tangent coefficients, \emph{Journal of Combinatorial Theory (A)} {\bf 21} (1976), 155--163.

\bibitem{Frontczak}
R. Frontczak, A short remark on Horadam identities with binomial coefficients, Ann. Math. Inform. 54 (2021), 5--13.

\bibitem{Gould} 
H. W. Gould, \emph{Combinatorial Identities}, West Virginia University, Morgantown, 1972. 

\bibitem{Gould2} 
H. W. Gould, Series transformations for finding recurrences for sequences, Fibonacci Quart. 28 (1990), 166--171.

\bibitem{gould14} 
H. W. Gould and J. Quaintance, Bernoulli numbers and a new binomial transform identity, J. Integer Seq. 17 (2014), Article 14.2.2.

\bibitem{Graham}
R. L. Graham, D.E. Knuth and O. Patashnik, \emph{Concrete mathematics: A foundation for computer science}, Second Edition, Addison-Wesley, Reading, 2022.

\bibitem{Haukkanen}
P. Haukkanen, Formal power series for binomial sums of sequences of numbers, Fibonacci Quart. 31 (1993), 28--31.

\bibitem{Horadam65}
A. F. Horadam, Basic properties of a certain generalized sequence of numbers, Fibonacci Quart. 3 (1965), 161--176.

\bibitem{Koshy}
T. Koshy, \emph{Fibonacci and Lucas Numbers with Applications}, Wiley-Interscience, 2001.

\bibitem{mathworld} 
R. Stanley and E. W. Weisstein, Catalan number, \emph{MathWorld - A Wolfram Resource}, \url{https://mathworld.wolfram.com/CatalanNumber.html}.

\bibitem{Mu}
Y.- P. Mu, Symmetric recurrence relations and binomial transforms, J. Number Theory 133 (9) (2013), 3127--3137.


\bibitem{OEIS}
N. J. A. Sloane (editor), The On-Line Encyclopedia of Integer Sequences, 2022. Available at \url{https://oeis.org}.

\bibitem{Prodinger}
H. Prodinger, Some information about the binomial transform, Fibonacci Quart. 32 (1994), 412--415.

\bibitem{Riordan} 
J. Riordan, \emph{Combinatorial Identities}, John Wiley \& Sons, Inc., New York, 1968. 

\bibitem{Srivastava}
H. M. Srivastava and J. Choi, \emph{Series Associated with the Zeta and Related Functions}, Springer Science+Media, B.V., 2001.

\bibitem{Stenlund}
D. Stenlund and J. G. Wan, Some double sums involving ratios of binomial coefficients arising from urn models,
J. Integer Seq. 22 (2019), Article 19.1.8.

\bibitem{sulei23} 
E. Suleiman and B. Sury, A potpourri of plenty identities involving Catalan, Fibonacci and trigonometric numbers, Parabola 59 (2023).

\bibitem{Sun1}
Z.-H. Sun, Invariant sequences under binomial transformation, Fibonacci Quart. 39 (2001), 324--333.

\bibitem{Sun2}
Z.-W. Sun, Combinatorial identities in dual sequences, European J. Comb. 24 (2003), 709--718.

\bibitem{Sun3}
Z.-H. Sun, Some further properties of even and odd sequences, Int. J. Number Theory 13 (2017), 1419--1442.

\bibitem{Vajda}
S. Vajda, \emph{Fibonacci and Lucas Numbers, and the Golden Section: Theory and Applications}, Dover Press, 2008.

\bibitem{Wang}
Y. Wang, Self-inverse sequences related to a binomial inverse pair, Fibonacci Quart. 43 (2005), 46--52.




\end{thebibliography}
\end{document}